\theoremstyle{plain}
\newtheorem{defi}{\definame}[section]
\newtheorem{prop}[defi]{\propname}
\newtheorem{theo}[defi]{\theoname}
\newtheorem{coro}[defi]{\coroname}
\newtheorem{lemm}[defi]{\lemmname}
\theoremstyle{definition}
\theoremstyle{remark}
\newtheorem{rema}[defi]{\remaname}
\newtheorem{exem}[defi]{\exemname}
\let\mathcal\mathscr
\newcommand{\Qp}{\mathbf{Q}_p}
\newcommand{\Cp}{\mathbf{C}_p}
\newcommand{\Zp}{\mathbf{Z}_p}
\newcommand{\Fp}{\mathbf{F}_p}
\newcommand{\NN}{\mathbf{N}}
\newcommand{\ZZ}{\mathbf{Z}}
\newcommand{\OO}{\mathcal{O}}
\newcommand{\MM}{\mathfrak{m}}
\newcommand{\Qpbar}{\overline{\mathbf{Q}}_p}
\renewcommand{\geq}{\geqslant}
\renewcommand{\leq}{\leqslant} 
\newcommand{\Gal}{\operatorname{Gal}}
\newcommand{\Mat}{\operatorname{Mat}}
\newcommand{\Lie}{\operatorname{Lie}}
\newcommand{\Sym}{\operatorname{Sym}}
\newcommand{\GL}{\mathrm{GL}}
\newcommand{\SL}{\mathrm{SL}}
\newcommand{\LieSL}{\mathfrak{sl}_2}
\newcommand{\Id}{\mathrm{Id}}
\newcommand{\Nm}{\mathrm{N}}
\newcommand{\Tr}{\mathrm{Tr}}
\newcommand{\vp}{\mathrm{val}_p}
\newcommand{\cyc}{\mathrm{cyc}}
\newcommand{\atplus}{\tilde{\mathbf{A}}^+}
\newcommand{\bt}{\widetilde{\mathbf{B}}}
\newcommand{\btplus}{\widetilde{\mathbf{B}}^+}
\newcommand{\bcris}{\mathbf{B}_{\mathrm{cris}}}  
\newcommand{\bdr}{\mathbf{B}_{\mathrm{dR}}}  
\newcommand{\bsen}{\mathbf{B}_{\mathrm{Sen}}}  
\newcommand{\asenM}{\mathbf{A}_{\mathrm{Sen},M}}  
\newcommand{\bsenM}{\mathbf{B}_{\mathrm{Sen},M}}  
\newcommand{\asen}{\mathbf{A}_{\mathrm{Sen}}}  
\newcommand{\dsen}{\mathrm{D}_{\mathrm{Sen}}}  
\newcommand{\dcroc}[1]{[\![ #1 ]\!]}
\newcommand{\dacc}[1]{\{\!\{ #1 \}\!\}}
\newcommand{\LT}{\mathrm{LT}}
\newcommand{\fin}{\mathrm{fin}}
\newcommand{\emb}{\mathrm{E}}
\newcommand{\an}{\mathrm{an}}
\newcommand{\la}{\mathrm{la}}
\newcommand{\dan}{\text{$\mbox{-}\mathrm{an}$}}
\newcommand{\fan}{\text{$F\!\mbox{-}\mathrm{la}$}}
\newcommand{\smat}[1]{\left( \begin{smallmatrix} #1 \end{smallmatrix} \right)}
\newcommand{\cbf}{\mathbf{c}}
\newcommand{\ibf}{\mathbf{i}}
\newcommand{\jbf}{\mathbf{j}}
\newcommand{\kbf}{\mathbf{k}}
\newcommand{\xbf}{\mathbf{x}}
\newcommand{\Tbf}{\mathbf{T}}
\author{Laurent Berger}
\address{UMPA de l'ENS de Lyon \\
UMR 5669 du CNRS \\ IUF}
\email{laurent.berger@ens-lyon.fr}
\urladdr{perso.ens-lyon.fr/laurent.berger/}
\author{Pierre Colmez}
\address{UPMC \\
Institut de Math\'ematiques de Jussieu \\
UMR 7586 du CNRS}
\email{pierre.colmez@imj-prg.fr}
\urladdr{www.math.jussieu.fr/~colmez/}
\date{\today}
\title[Th\'eorie de Sen et vecteurs localement analytiques]{Th\'eorie de Sen et vecteurs localement analytiques}
\subjclass{11F; 11S; 22E}
\keywords{th\'eorie de Sen; vecteur localement analytique; groupe de Lubin-Tate; p\'eriode $p$-adique; repr\'esentation $p$-adique}
\begin{document}

\begin{abstract}
Nous g\'en\'eralisons la th\'eorie de Sen \`a des extensions $K_\infty/K$ dont le groupe de Galois 
est un groupe de Lie $p$-adique de dimension quelconque. Pour cela, nous rempla\c{c}ons l'espace des vecteurs $K$-finis de Sen par celui des vecteurs localement analytiques de Schneider et Teitelbaum. On obtient alors un espace vectoriel sur le corps des vecteurs localement analytiques de $\hat{K}_\infty$. Nous 
d\'ecrivons ce corps en portant une attention particuli\`ere au cas d'une extension de Lubin-Tate.
\end{abstract}

\begin{altabstract}
We generalize Sen theory to extensions $K_\infty/K$ whose Galois group is a $p$-adic Lie group 
of arbitrary dimension. To do so, we replace Sen's space of $K$-finite vectors by Schneider and Teitelbaum's space of locally analytic vectors. One then gets a vector space over the field of locally analytic vectors of $\hat{K}_\infty$. We 
describe this field in general and pay a special attention to the case of Lubin-Tate extensions.
\end{altabstract}

\maketitle

\tableofcontents

\setlength{\baselineskip}{18pt}

\section{Introduction}\label{intro}
\subsection{Descente presque \'etale}
On fixe une cl\^oture alg\'ebrique $\Qpbar$ de $\Qp$ et on note $\Cp$ le compl\'et\'e de $\Qpbar$ pour la norme $p$-adique. 

Si $K$ est une extension finie de $\Qp$ contenue dans $\Qpbar$, et si $G_K=\Gal(\Qpbar/K)$,
 une id\'ee qui s'est av\'er\'ee fructueuse pour l'\'etude des repr\'esentations $p$-adiques de $G_K$, 
est de d\'evisser $\Qpbar$ en introduisant une extension interm\'ediaire $K \subset K_\infty \subset \Qpbar$,
 telle que $K_\infty/K$ ne soit pas trop compliqu\'ee, mais quand m\^eme profond\'ement ramifi\'ee 
(voir~\cite{CG}) de telle sorte que $\Qpbar/K_\infty$ soit presque \'etale au sens de Faltings. 
On note $H_K$ le groupe $\Gal(\Qpbar/K_\infty)$ et, si $K_\infty$ est une extension galoisienne de $K$, 
on note $\Gamma_K$ le groupe $\Gal(K_\infty/K)=G_K/H_K$. Le fait que l'extension $\Qpbar/K_\infty$ est 
presque \'etale a pour cons\'equence le r\'esultat de descente presque \'etale suivant (cf.~\cite{SN80}).

\begin{theo}\label{thS1}
Si $d \geq 1$, alors $H^1(H_K,\GL_d(\Cp)) =\{1\}$.
\end{theo}
\begin{rema}\label{rem1}
{\rm (i)} D'apr\`es le th\'eor\`eme d'Ax-Sen-Tate, $\Cp^{H_K}$ est l'adh\'erence $\hat{K}_\infty$ de $K_\infty$ dans $\Cp$ (autrement dit, c'est le compl\'et\'e de $K_\infty$ pour la norme $p$-adique). 
Le th.~\ref{thS1} se traduit par le fait que, si
$X$ est une $\Cp$-repr\'esentation semi-lin\'eaire de dimension finie $d$ de $G_K$
(par exemple si $X=\Cp\otimes_{\Qp}V$, o\`u $V$ est une $\Qp$-repr\'esentation lin\'eaire
de $G_K$, de dimension~$d$),
l'application $\Cp \otimes_{\hat{K}_\infty} X^{H_K} \to X$ est un isomorphisme.
En particulier, si $K_\infty/K$ est galoisienne, 
$W=X^{H_K}$ est une $\hat{K}_\infty$-repr\'esentation semi-lin\'eaire 
de dimension~$d$ de $\Gamma_K$.
On est donc naturellement amen\'e \`a \'etudier 
les $\hat{K}_\infty$-repr\'esentations semi-lin\'eaires
de dimension finie de $\Gamma_K$ pour associer 
des invariants aux $\Qp$-repr\'esentations lin\'eaires
de $G_K$; c'est exactement ce qu'a fait Sen~\cite{SN80} pour d\'efinir les poids
de Hodge-Tate d'une repr\'esentation quelconque
(cf.~rem.~\ref{rem2} ci-dessous).

{\rm (ii)} On a le m\^eme genre d'\'enonc\'e en rempla\c cant $\Cp$ par $\bdr^+$ \cite{F04}
ou par $\bt^\dagger$ \cite{CC98,LB11} ou encore par $\bcris^{\varphi=1}$ \cite{PC98,LB8,FF}
(et $\hat{K}_\infty$ par les points fixes par $H_K$ de l'anneau correspondant).
\end{rema}
On peut, par exemple, prendre pour $K_\infty$ une des extensions suivantes :

\quad --- l'extension cyclotomique $K(\mu_{p^\infty})$;

\quad --- l'extension de Kummer $K(\sqrt[p^\infty]{\pi})$, o\`u $\pi$ est une uniformisante de $K$;

\quad --- la compos\'ee $K(\mu_{p^\infty},\sqrt[p^\infty]{\pi})$ des deux extensions pr\'ec\'edentes;

\quad --- une extension de type Lubin-Tate, obtenue en rajoutant \`a $K$ les points de $p^\infty$-torsion d'un groupe de Lubin-Tate associ\'e \`a une uniformisante d'un sous-corps de $K$;

\quad --- une extension galoisienne infiniment ramifi\'ee de groupe de Galois 
un groupe de Lie $p$-adique (qui est alors profond\'ement ramifi\'ee, cf. \cite{S72} et \cite{CG}).

Chacun des exemples ci-dessus a son int\'er\^et propre :

\quad $\bullet$
L'extension la plus utilis\'ee est l'extension cyclotomique, par exemple dans la th\'eorie des $(\varphi,\Gamma)$-modules (cf.~\cite{F90} et \cite{CC98}), mais pour beaucoup de questions, il semble naturel d'utiliser d'autres extensions.

\quad $\bullet$
De beaucoup de points de vue, la plus simple serait l'extension de Kummer mais elle n'est pas galoisienne, ce qui pose de s\'erieux probl\`emes (elle s'est quand m\^eme r\'ev\'el\'ee tr\`es utile pour l'\'etude des repr\'esentations semi-stables~\cite{CBun,MK06,BTR13}) et on lui pr\'ef\`ere parfois~\cite{FTR,XC13} la compos\'ee $K(\mu_{p^\infty},\sqrt[p^\infty]{\pi})$.

\quad $\bullet$ 
En vue d'une extension de la correspondance de Langlands locale $p$-adique \`a $\GL_2(K)$, il semble naturel de consid\'erer une extension de Lubin-Tate associ\'ee \`a une uniformisante de $K$ \cite{KR09,FX13,PGMLAV} car cela rend la correspondance pour $\GL_1(K)$ compl\`etement transparente.

\quad $\bullet$ 
Enfin, pour des applications \`a la th\'eorie d'Iwasawa non commutative \cite{MH79,JC99,V03}, le cadre naturel
est celui d'une extension de groupe de Galois un groupe de Lie $p$-adique arbitraire (ce qui inclut tous
les cas pr\'ec\'edents \`a l'exception de l'extension de Kummer).

Malheureusement, si $\Gamma_K$ est de dimension~$\geq 2$, certains outils fondamentaux de la th\'eorie cyclotomique manquent \`a l'appel (comme l'existence des traces normalis\'ees continues sur $K_\infty$ de \cite{T67}),
ce qui rend la th\'eorie nettement plus d\'elicate, et explique qu'elle soit moins d\'evelopp\'ee.
Le but de cet article et de~\cite{PGMLAV} est de sugg\'erer que l'on peut remplacer 
ces outils manquants
par des \'el\'ements de la th\'eorie des repr\'esentations de~$\Gamma_K$ : 
le concept de vecteur localement analytique est utilis\'e dans cet article
pour \'etendre la th\'eorie de Sen; dans~\cite{PGMLAV}, ce concept permet
de d\'efinir des invariants palliant le manque de surconvergence des $(\varphi,\Gamma)$-modules dans le cas d'une extension de Lubin-Tate. 

Dans tout le reste de l'article, 
{\it on suppose que $K_\infty/K$ est galoisienne, que $\Gamma_K=\Gal(K_\infty/K)$ est un groupe
de Lie $p$-adique, et que le sous-groupe d'inertie de $\Gamma_K$
est infini} (c'est automatique si $\dim \Gamma_K\geq 2$).
\subsection{Vecteurs $K$-finis}
Soit $W$ une $\hat{K}_\infty$-repr\'esentation semi-lin\'eaire de dimension finie de $\Gamma_K$. Si $w \in W$, 
disons que $w$ est {\it $K$-fini} s'il appartient \`a un sous $K$-espace vectoriel de dimension finie de $W$ qui est 
stable par $\Gamma_K$. Soit $W^{\fin}$ l'ensemble des vecteurs $K$-finis de $W$. 
C'est un sous $K_\infty$-espace vectoriel de $W$. 

Si $\dim\Gamma_K =1$, on dispose du r\'esultat suivant de Sen (cf.\ \cite{SN80}). 

\begin{theo}\label{thS2}
On a $\hat{K}_\infty^{\fin}=K_\infty$ et, plus g\'en\'eralement,
si $W$ est une $\hat{K}_\infty$-repr\'esen\-tation semi-lin\'eaire de dimension finie de $\Gamma_K$, 
l'application $\hat{K}_\infty \otimes_{K_\infty} W^{\fin} \to W$ est un isomorphisme.
\end{theo}

\begin{rema}\label{rem2}
{\rm (i)} 
L'alg\`ebre de Lie de $\Gamma_K$ agit lin\'eairement sur le $K_\infty$-espace vectoriel $W^{\fin}$.
Cette alg\`ebre est de rang~$1$ sur $\Zp$ et, si $K_\infty/K$ est l'extension cyclotomique, elle admet
un g\'en\'erateur canonique $\nabla=\lim_{\gamma\to 1}\frac{\gamma-1}{\chi_{\rm cycl}(\gamma)-1}$.
 L'op\'erateur de $W^{\fin}$ ainsi d\'efini
est {\it l'op\'erateur de Sen} $\Theta_{\mathrm{Sen}}$. Ses valeurs propres sont les
{\it poids de Hodge-Tate} de $W$.

{\rm (ii)}
Si $W=(\Cp\otimes_{\Qp} V)^{H_K}$ o\`u $V$ est une $\Qp$-repr\'esentation de $G_K$ et $K_\infty/K$
est l'extension cyclotomique, 
on note $\dsen(V)$ l'espace $W^{\fin}$ et les {\it poids de Hodge-Tate de $V$} sont, par d\'efinition,
ceux de $W$.  De plus:

\quad $\bullet$ L'application naturelle $\Cp\otimes_{K_\infty}\dsen(V)\to \Cp \otimes_{\Qp} V$
est un isomorphisme de repr\'esentations $\Cp$-semi-lin\'eaires de $G_K$.

\quad $\bullet$ Si on \'etend $\Theta_{\rm Sen}$ par $\Cp$-lin\'earit\'e \`a
$\Cp\otimes_{K_\infty}\dsen(V)$, alors $\Theta_{\rm Sen}$ commute \`a l'action
de $G_K$ et on a
$$\big(\Cp\otimes_{K_\infty}\dsen(V)\big)^{\Theta_{\rm Sen}=0}=\Cp\otimes_K (\Cp\otimes_{\Qp} V)^{G_K}.$$

\quad $\bullet$ $\Theta_{\rm Sen}=0$ (ce qui \'equivaut \`a ce que $V$ soit de Hodge-Tate
\`a poids de Hode-Tate tous nuls) 
si et seulement si le sous-groupe d'inertie de $G_K$
agit \`a travers un quotient fini sur~$V$ (cf.~\S 5 de \cite{SHT}).

\quad $\bullet$ 
$\Theta_{\rm Sen}$ appartient au sous-$\Cp$-espace vectoriel de $\Cp\otimes_{\Qp} {\rm End}(V)$
engendr\'e par l'alg\`ebre de Lie ${\mathfrak g}$ de l'image de $G_K$ dans $\GL(V)$,
et ${\mathfrak g}$ est le plus petit sous-$\Qp$-espace vectoriel de
${\rm End}(V)$ ayant cette propri\'et\'e (cf.~\S 3.2 de \cite{SN80}).

{\rm (iii)}  L'espace $\dsen(V)$ du (ii) admet, si $n$ est assez grand,
un unique sous-$K_n$-espace vectoriel ${\rm D}_{{\rm Sen},n}(V)$ (avec $K_n=K(\mu_{p^n})$), stable par
$\Gamma_K$ et tel que l'application naturelle $K_\infty\otimes_{K_n}{\rm D}_{{\rm Sen},n}(V)\to \dsen(V)$
soit un isomorphisme (il en est alors de m\^eme
de l'application $\Cp\otimes_{K_n}{\rm D}_{{\rm Sen},n}(V)\to \Cp\otimes_{\Qp} V$);
ce sous-espace est stable par $\Theta_{\rm Sen}$. 
\end{rema}

Le fait que $W^{\fin}$ n'est pas un objet adapt\'e si $\dim\Gamma_K\geq 2$ avait \'et\'e 
observ\'e par Sen lui-m\^eme. A titre d'exemple, signalons le r\'esultat suivant (prop.~\ref{vecfinec}). Soit $\Gamma_K$ un sous-groupe ouvert de $\SL_2(\Zp)$ et $\pm s$ les deux poids de Hodge-Tate de la repr\'esentation d\'eduite de $G_K \to \Gamma_K \to \SL_2(\Zp)$.

\begin{prop}\label{thsl2}
Soit $\Gamma_K$ comme ci-dessus, avec $s \neq 0$, 
et soit $W$ une $\hat{K}_\infty$-repr\'esentation semi-lin\'eaire de dimension finie de $\Gamma_K$.

{\rm (i)}
 Si $W^{\fin} \neq \{0\}$, alors $W$ a un poids de Hodge-Tate qui appartient \`a $s \cdot \ZZ$;

{\rm (ii)}
Si $W^{\fin}$ contient une base de $W$, alors l'op\'erateur de Sen de $W$ est semisimple, \`a valeurs propres dans $s \cdot \ZZ$.
\end{prop}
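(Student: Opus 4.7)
I will exploit the natural action of $\mathfrak{g} := \Lie(\Gamma_K) = \LieSL(\Qp)$ on finite-dimensional $\Gamma_K$-stable $K$-subspaces of $W$, combined with the classification of finite-dimensional representations of $\LieSL$, to constrain the Sen eigenvalues of $W$.

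Given a nonzero vector $w \in W^{\fin}$, I first take $W_0 \subset W$ to be the $K$-linear span of $\Gamma_K \cdot w$: this is a finite-dimensional $K$-subspace stable under $\Gamma_K$, on which $\Gamma_K$ acts $K$-linearly (since it fixes $K$) and continuously. Differentiating yields a $\Qp$-linear action of $\mathfrak{g}=\LieSL(\Qp)$ on $W_0$ by $K$-linear endomorphisms, and extending scalars to $\Cp$ turns $W_0 \otimes_K \Cp$ into a finite-dimensional $\LieSL(\Cp)$-representation. By Weyl's complete reducibility theorem, it decomposes as $\bigoplus_j \Sym^{n_j}(U)$, where $U$ is the standard $2$-dimensional representation of $\LieSL$.

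To identify the Sen operator on this space, let $V_{\rm std}$ denote the $\Qp$-representation of $G_K$ arising from $G_K \to \Gamma_K \hookrightarrow \SL_2(\Zp)$, whose Hodge-Tate weights are $\pm s$. By the fourth bullet of Remark~\ref{rem2}(ii), its Sen operator $\Theta_{\rm std}$ lies in the $\Cp$-subspace of $\mathrm{End}(\Cp \otimes V_{\rm std})$ spanned by $\mathfrak{g}$, namely in $\LieSL(\Cp)$; with $s \neq 0$ and eigenvalues $\pm s$, it is $\SL_2(\Cp)$-conjugate to $s\cdot\mathrm{diag}(1,-1)$. I then invoke functoriality of the Sen operator: on any $\Cp$-representation of $G_K$ factoring through $\Gamma_K$, the Sen operator equals the action of the same universal element $\Theta_{\rm std} \in \LieSL(\Cp)$ through the derived representation. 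Consequently, on each summand $\Sym^{n_j}(U)$ of $W_0 \otimes \Cp$, its eigenvalues are $\{n_j s, (n_j-2)s, \ldots, -n_j s\} \subset s\ZZ$. To conclude (i), I observe that the natural $G_K$-equivariant map $\Cp \otimes_K W_0 \to \Cp \otimes_{\hat{K}_\infty} W$ has nonzero image (as $w \neq 0$); the Hodge-Tate weights of this image are a subset of the eigenvalues just computed, all lying in $s\ZZ$. For (ii), the hypothesis that $W^{\fin}$ contains a $\hat{K}_\infty$-basis of $W$ allows a choice of $W_0$ for which $\hat{K}_\infty \otimes_K W_0 \to W$ is surjective; then $\Cp \otimes_K W_0 \to \Cp \otimes_{\hat{K}_\infty} W$ is also surjective, and the Sen operator of $W$ becomes a quotient of the semisimple operator induced by $\Theta_{\rm std}$ on $W_0 \otimes \Cp$, hence semisimple with eigenvalues in $s\ZZ$.

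The main obstacle will be rigorously establishing the functoriality assertion that a single universal element $\Theta_{\rm std} \in \LieSL(\Cp)$ governs the Sen operator of every $\Cp$-representation of $G_K$ factoring through $\Gamma_K$. Remark~\ref{rem2}(ii) already gives the statement one representation at a time (for $\Qp$-representations); the cross-compatibility has to be extracted from the tensor-compatibility of the Sen operator, together with the extra bookkeeping required to pass from $\Qp$-representations to $K$-representations such as $W_0$ (by restricting scalars and decomposing $\Cp \otimes_\Qp K = \prod_\sigma \Cp$ according to embeddings $\sigma : K \hookrightarrow \Cp$).
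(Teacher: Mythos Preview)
Your approach is correct and follows the same line as the paper's proof. Both arguments pick a finite-dimensional $\Gamma_K$-stable subspace of $W^{\fin}$, use the classification of $\LieSL$-representations to see it is built out of copies of $\Sym^k V$, compute the Hodge--Tate weights of these via functoriality of the Sen operator, and then transfer the conclusion to $W$ through the natural (nonzero, resp.\ surjective) map $\Cp\otimes(\cdot)\to\Cp\otimes_{\hat K_\infty}W$.

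The paper is slightly more economical in two places. First, it works over $\Qp$ rather than $K$: given $x\in W^{\fin}$, it directly picks an irreducible $\Qp$-linear $\LieSL$-subrepresentation $Y(x)\subset W^{\fin}$ containing $x$ (this exists by semisimplicity) and identifies $Y(x)\cong\Sym^k V$; this avoids your detour through the $K$-span and the decomposition of $\Cp\otimes_{\Qp}K$ along embeddings. Second, rather than phrasing things via a ``universal'' element $\Theta_{\rm std}\in\LieSL(\Cp)$ acting on every representation, the paper simply observes that $Y(x)$ and $\Sym^k V$ are isomorphic as $G_L$-representations for a suitable open $\Gamma_L\subset\Gamma_K$, so their Hodge--Tate weights coincide and lie in $s\cdot\ZZ$. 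This is exactly the functoriality you isolate as your ``main obstacle'', but invoked in its most concrete form (tensor compatibility of $\Theta_{\rm Sen}$ applied to $\Sym^k V$), so no additional cross-compatibility argument is needed. Your framing via a universal $\Theta_{\rm std}$ is correct and conceptually pleasant, but it requires you to argue that every finite-dimensional $\Gamma_K$-representation is a subquotient of a tensor construction in $V_{\rm std}$---true here because all $\LieSL$-irreducibles are $\Sym^k V$---whereas the paper gets this for free by working one irreducible at a time.
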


\subsection{Vecteurs localement analytiques}
L'id\'ee principale de cet article est de remplacer $W^{\fin}$ par l'espace des vecteurs localement $\Qp$-analytiques $W^{\la}$ dont nous rappelons maintenant la d\'efinition. 

Soit $G$ un groupe de Lie $p$-adique (par exemple $\Gamma_K$), et soit $W$ un $\Qp$-espace de Banach qui est une repr\'esentation de $G$. Si $w \in W$, alors suivant le \S\,7 de \cite{ST03}, 
nous disons que $w$ est {\it localement $\Qp$-analytique} si l'application {\og orbite \fg} $G \to W$, 
donn\'ee par $g \mapsto g ( w)$, est une fonction localement $\Qp$-analytique sur $G$. 
On note $W^{\la}$ l'espace des vecteurs localement $\Qp$-analytiques de~$W$.
On a $W^{\fin}\subset W^{\la}$ d'apr\`es un analogue d'un r\'esultat classique de Cartan (\S\,V.9 de \cite{SLALG}). Notre premier r\'esultat (th.~\ref{finisan}), qui montre qu'en dimension~$1$ on retombe
sur les objets introduits par Sen, est le suivant.

\begin{theo}\label{thmA}
Si $\dim\Gamma_K=1$ et si $W$ est une $\hat{K}_\infty$-repr\'esen\-tation 
semi-lin\'eaire de dimension finie de $\Gamma_K$, alors $W^{\fin} = W^{\la}$.
\end{theo}

L'analogue du th.~\ref{thS2} dans le cas o\`u $\Gamma_K$ est de dimension quelconque
est le r\'esultat suivant (th.~\ref{anisok}) qui montre que $W^{\la}$ est un invariant fin de $W$.

\begin{theo}\label{thmB}
Si $\Gamma_K$ est un groupe de Lie $p$-adique, et si $W$ est une $\hat{K}_\infty$-repr\'esen\-tation semi-lin\'eaire de dimension finie de $\Gamma_K$, 
l'application naturelle $\hat{K}_\infty \otimes_{\hat{K}_\infty^{\la}} W^{\la} \to W$ est un isomorphisme.
\end{theo}

\begin{rema}\label{rem3}
{\rm (i)}  Le th.~\ref{thmB} soul\`eve la question de la description de $\hat{K}_\infty^{\la}$. 
On montre facilement (lem.~\ref{prodwan}) que $\hat{K}_\infty^{\la}$ est toujours un corps.
Si $\dim\Gamma_K=1$, alors $\hat{K}_\infty^{\la} = K_\infty$ par le th.~\ref{thmA}, 
mais si $\dim\Gamma_K\geq 2$, alors $\hat{K}_\infty^{\la}$ contient strictement $K_\infty$ 
(cf.~th.~\ref{thgen}).

{\rm (ii)} Le th.~\ref{thmB} fait \'echo au r\'esultat de Schneider et Teitelbaum~\cite{ST03} selon lequel,
si $W$ est une repr\'esentation admissible de $\Gamma_K$, alors $W^{\la}$ est dense dans $W$.
On ne peut pas utiliser ce r\'esultat ici car
une $\hat{K}_\infty$-repr\'esentation semi-lin\'eaire de $\Gamma_K$ n'est pas une repr\'esentation
admissible de $\Gamma_K$.  Par exemple, dans le cas de l'extension cyclotomique,
$(\OO_{\hat{K}_\infty}/p)^{\Gamma_K}$ contient l'image de $\frac{p}{\zeta-1}$ modulo $p$, pour toute racine de l'unit\'e~$\zeta$ d'ordre une puissance de $p$, et donc est de dimension infinie sur~$\Fp$.  
\end{rema}

Soit $d$ la dimension de $\Gamma_K$; il existe alors (\S 27 de \cite{PSLG})
un groupe analytique ${\mathbb G}$ de dimension~$d$, d\'efini sur $\Qp$, tel que l'on ait
$\Gamma_K={\mathbb G}(\Zp)$.  
Si $n\geq 1$, on note $\Gamma_n$ le groupe ${\mathbb G}(p^n\Zp)$, image de $p^n{\mathfrak g}$ 
par l'exponentielle
(o\`u ${\mathfrak g}$ est l'alg\`ebre de Lie de $\Gamma_K$ et est un $\Zp$-module libre de rang~$d$),
et on note $K_n$ le sous-corps $K_\infty^{\Gamma_n}$ de
$K_\infty$.  L'anneau $\hat{K}_\infty^\la$ est la limite inductive des 
$\hat{K}_\infty^{\Gamma_n\dan}$, o\`u l'on a not\'e
$\hat{K}_\infty^{\Gamma_n\dan}$ l'ensemble des $v$ tels que $g\mapsto g\cdot v$ soit
analytique sur~$\Gamma_n$, et $\hat{K}_\infty^{\Gamma_n\dan}$ est une $K_n$-alg\`ebre de Banach;
on note $X_n$ le $K_n$-espace analytique qu'elle d\'efinit.  
Le r\'esultat suivant (th.~\ref{sen1}) 
montre que, apr\`es extension des scalaires \`a $\Cp$, 
$X_n$ devient une boule de dimension~$d-1$.

\begin{theo}\label{thgen}
Soit $V$ une repr\'esentation fid\`ele de $\Gamma_K$, et soit
${\mathbb H}$ le sous-groupe \`a un param\`etre de ${\mathbb G}$ engendr\'e par l'op\'erateur
de Sen de $V$,
vu comme \'el\'ement de $\Cp\otimes_{\Zp} {\mathfrak g}$.  Si $n \geq 1$, alors
$X_n(\Cp)={\mathbb H}(p^n\OO_{\Cp})\backslash {\mathbb G}(p^n\OO_{\Cp})$.
\end{theo}

\begin{rema}\label{thgen2}
{\rm (i)} La preuve du th\'eor\`eme montre que ${\mathbb H}$ n'est pas trivial,
ce qui se traduit par le fait que $\Theta_{\rm Sen}\neq 0$.
Comme la seule hypoth\`ese que l'on a faite sur $\Gamma_K$ est que
son sous-groupe d'inertie est infini, cela fournit une preuve du th\'eor\`eme
de Sen (troisi\`eme point du (ii) de la rem.~\ref{rem2})
selon lequel une repr\'esentation de $G_K$ 
est de Hodge-Tate \`a poids de Hodge-Tate tous
nuls si et seulement si le sous-groupe d'inertie de $G_K$ agit \`a travers un quotient fini;
cette preuve n'utilise pas les r\'esultats de~\cite{S72}. 
Par contre, il n'a pas l'air possible de retrouver, par cette m\'ethode,
 le r\'esultat plus fin
d\'ecrivant l'alg\`ebre de Lie de $G_K$ en termes de $\Theta_{\rm Sen}$.

{\rm (ii)} La m\'ethode permettant de prouver le th.~\ref{thS1} peut aussi \^etre
utilis\'ee pour montrer que si $L$ est une extension galoisienne de $K$ qui n'est pas profond\'ement ramifi\'ee, alors $H^1(\Gal(L/K),\GL_d(\hat L))=\{1\}$.
La non nullit\'e de $\Theta_{\rm Sen}$ implique donc que $K_\infty/K$ est profond\'ement
ramifi\'ee.  Autrement dit, on a red\'emontr\'e, sans utiliser~\cite{S72},
qu'une extension galoisienne de groupe de Galois un groupe de Lie $p$-adique
est profond\'ement ramifi\'ee si et seulement si elle est infiniment ramifi\'ee.

{\rm (iii)} Comme $X_n$ est d\'efini sur $K_n$ et que ${\mathbb G}$ est d\'efini sur $\Qp$,
on dispose d'actions de $G_{K_n}$ sur $X_n(\Cp)$ et sur ${\mathbb G}(p^n\OO_{\Cp})$, mais
celles-ci ne sont pas compatibles: il faut tordre l'application naturelle.
Notons $\gamma:G_{K_n} \to\Gamma_n = {\mathbb G}(p^n\Zp)$ la projection
naturelle et $\pi:{\mathbb G}(p^n\OO_{\Cp})\to X_n(\Cp)$ l'application fournie
par le th.~\ref{thgen}. On a alors
$$\sigma(\pi(x))=\pi(\gamma(\sigma)\sigma(x)),\quad{\text{si $x\in {\mathbb G}(p^n\OO_{\Cp})$
et $\sigma\in G_{K_n}$.}}$$
Le fait que cette action descende \`a 
${\mathbb H}(p^n\OO_{\Cp})\backslash {\mathbb G}(p^n\OO_{\Cp})$
est une cons\'equence de ce que $\Theta_{\rm Sen}$ commute \`a $G_{K_n}$.
On remarquera que l'orbite sous $G_{K_n}$
de l'\'element neutre de ${\mathbb G}$ est Zariski dense dans
$X_n(\Cp)$ (c'est l'image par $\pi$ de $\Gamma_n$), ce qui explique
que $\hat{K}_\infty^{\la}=\cup_{n\geq 1} \hat{K}_\infty^{\Gamma_n\dan}$ soit un corps
bien que $\cap_{n\geq1}X_n(\Cp)$ ne soit pas vide.
\end{rema}

\subsection{Extensions de type Lubin-Tate}
Supposons dans ce qui suit que
$K_\infty$ est l'extension de $K$ engendr\'ee par les points de torsion 
d'un $\OO_F$-module formel, avec $F \subset K$ extension galoisienne de $\Qp$. 
Par la th\'eorie de Lubin-Tate, le groupe $\Gamma_K$ s'identifie, via le caract\`ere 
de Lubin-Tate $\chi_F$, \`a un sous-groupe ouvert de $\OO_F^\times$. La th\'eorie 
des p\'eriodes $p$-adiques fournit, pour chaque plongement $\tau : F \to \Qpbar$ 
diff\'erent de l'identit\'e, un \'el\'ement $u_\tau \in \Cp^\times$ tel 
que $g(u_\tau) = \tau \circ \chi_F(g) \cdot u_\tau$. Soit $x_\tau = \log(u_\tau)$. 
Le r\'esultat suivant donne une bonne id\'ee de ce \`a quoi ressemble $\hat{K}_\infty^{\la}$;
nous renvoyons au 
th.~\ref{qpanfinf} pour un \'enonc\'e plus pr\'ecis mais plus technique.

\begin{theo}\label{thmC}
L'anneau $K_\infty[\{x_\tau\}_{\tau \neq \Id}]$ est dense dans $\hat{K}_\infty^{\la}$ pour sa topologie naturelle.
\end{theo}

Si $W$ est une $\hat{K}_\infty$-repr\'esentation semi-lin\'eaire de dimension finie 
de $\Gamma_K$, alors pour tout plongement $\tau : F \to \Qpbar$, on dispose de l'op\'erateur 
diff\'erentiel $\nabla_\tau : W^{\la} \to W^{\la}$. Un vecteur $w \in W$ est 
localement $F$-analytique s'il est localement $\Qp$-analytique et si $\nabla_\tau(w)=0$ 
pour tout $\tau \neq \Id$. On note $W^{\fan}$ l'espace des vecteurs de $W$ qui sont 
localement $F$-analytiques. Le th.~\ref{thmC} implique que $\hat{K}_\infty^{\fan} = K_\infty$. 
On a alors (th.~\ref{fanvec}).

\begin{theo}\label{thmD}
Si $K_\infty/K$ est une extension de Lubin-Tate, et si $W$ est une $\hat{K}_\infty$-repr\'esentation semi-lin\'eaire de dimension finie de $\Gamma_K$, alors $\hat{K}_\infty^{\la} \otimes_{K_\infty} W^{\fan} \to W^{\la}$ est un isomorphisme.
\end{theo}

On peut donc associer \`a $W$ le $K_\infty$-espace vectoriel $W^{\fan}$, qui est alors muni de l'op\'erateur diff\'erentiel $K_\infty$-lin\'eaire $\nabla_{\Id}$. Cet op\'erateur est semblable \`a $\Theta_{\mathrm{Sen}}$.

\section{Rappels et compl\'ements}
\label{rappelsec}

Dans tout cet article, nous utilisons la notation multi-indice. Soit $\NN = \ZZ_{\geq 0}$; si $\cbf = (c_1, \hdots,c_d)$ et $\kbf = (k_1,\hdots,k_d) \in \NN^d$, alors $\cbf^\kbf = c_1^{k_1} \times \cdots \times c_d^{k_d}$. On pose $\kbf ! = k_1! \times \cdots \times k_d!$ et $|\kbf|= k_1 + \cdots + k_d$. On note $\mathbf{1}_j$ le $d$-uplet $(k_1,\hdots,k_d)$ o\`u $k_i = 0$ si $i \neq j$ et $k_j=1$. Rappelons qu'un espace LB est un espace vectoriel topologique qui est la limite inductive d'une suite d'espaces de Banach, et que le th\'eor\`eme de l'image ouverte est vrai pour ces espaces (th.~1.1.17 de \cite{MEAMS}).

\subsection{Vecteurs localement analytiques}\label{locvec}
Nous faisons tout d'abord quelques rappels et compl\'ements sur les vecteurs localement analytiques. Nous renvoyons par exemple \`a la monographie \cite{MEAMS} pour plus de d\'etails. 

Soit $G$ un groupe de Lie $p$-adique, et soit $W$ un $\Qp$-espace de Banach qui est une repr\'esentation de $G$. Si $w \in W$, alors suivant le  \S\,7 de \cite{ST03}, disons que $w$ est 
{\it localement $\Qp$-analytique} si l'application orbite $G \to W$, donn\'ee par $g \mapsto g ( w)$, 
est une fonction localement $\Qp$-analytique sur $G$. On note $W^{\la}$ l'espace des vecteurs 
localement $\Qp$-analytiques de $W$. Si $w \in W^{\la}$, il existe donc un sous-groupe compact ouvert $H_w$ 
de $G$, et des coordonn\'ees locales $c_i : H_w \to \Zp$ qui donnent lieu \`a une bijection analytique 
entre $H_w$ et $\Zp^d$, et une suite $\{ w_\kbf \}_{\kbf \in \NN^d}$ de $W$ telle que $w_{\kbf} \to 0$ 
quand $|\kbf| \to +\infty$, tels que $h(w) = \sum_{\kbf \in \NN^d} \cbf(h)^{\kbf} w_{\kbf}$ si $h \in H_w$. 
Si $H$ est un sous-groupe de $G$, on note $W^{H\dan}$ l'espace des vecteurs de $W$ qui sont globalement analytiques sur $H$.

Soient $G$ et $H$ deux groupes de Lie $p$-adiques, et $f : G \to H$ un morphisme 
analytique de groupes. 
Si $W$ est une repr\'esentation de $H$, on peut aussi voir $W$ 
comme une repr\'esentation de $G$.  Le r\'esultat suivant est imm\'ediat.

\begin{lemm}
\label{lamap}
Si $w \in W$ est un vecteur localement $\Qp$-analytique pour $H$, alors c'est un vecteur localement $\Qp$-analytique pour $G$.
\end{lemm}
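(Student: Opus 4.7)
The result should be immediate once one factors the orbit map appropriately. The plan is as follows.

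First, for a fixed $w \in W$, I would observe that the orbit map of $w$ under the $G$-action, $o_w^G : G \to W$ defined by $g \mapsto g(w)$, coincides with the composition
\[ o_w^G = o_w^H \circ f, \]
where $o_w^H : H \to W$, $h \mapsto h(w)$, is the orbit map of $w$ under the $H$-action. This is nothing more than the definition of the $G$-action on $W$ via the morphism $f$.

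Second, I would apply the hypothesis. Saying that $w \in W^{\la}$ for the $H$-action means exactly that $o_w^H$ is locally $\Qp$-analytic on $H$: there is a compact open subgroup $H_w \subset H$ and coordinates $c : H_w \to \Zp^{\dim H}$ in which $o_w^H(h) = \sum_{\kbf} c(h)^{\kbf} w_{\kbf}$ for some sequence $w_{\kbf} \in W$ tending to $0$. Since $f$ is an analytic morphism of $p$-adic Lie groups, one can choose a compact open subgroup $G_w \subset G$ small enough that $f(G_w) \subset H_w$ and such that, in suitable local coordinates on $G_w$, each component of $c \circ f$ is given by a convergent multivariate power series. Substituting these expressions into the expansion of $o_w^H$ and reorganising term by term then produces a convergent expansion of $o_w^G$ on $G_w$, hence $w \in W^{\la}$ for the $G$-action.

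The only mildly technical point is the stability of local $\Qp$-analyticity under composition with a $p$-adic analytic map of manifolds, with values in a Banach space. This is a standard fact from $p$-adic analysis: it suffices to shrink $G_w$ further so that the image of $c \circ f$ lies inside a closed polydisk of convergence for the series defining $o_w^H$, after which Cauchy estimates control the reorganisation of the double series. A reference is the monograph \cite{MEAMS} cited just above. There is therefore no genuine obstacle, which is consistent with the authors' announcement that the lemma is immediate.
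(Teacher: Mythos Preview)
Your proposal is correct and matches the paper's approach: the authors do not give a proof at all, stating only that the result is immediate (``Le r\'esultat suivant est imm\'ediat''), and what you have written is exactly the natural unpacking of that word --- factor the orbit map as $o_w^G = o_w^H \circ f$ and use that composition with an analytic map preserves local analyticity of Banach-valued functions. There is nothing to add.
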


\begin{lemm}
\label{mapla}
Soit $G$ un groupe de Lie $p$-adique, soient $W$ et $X$ deux $\Qp$-espaces de Banach, et soit $\pi : W \to X$ une application lin\'eaire continue. Si $f : G \to W$ est une fonction  localement $\Qp$-analytique, alors $\pi \circ f : G \to X$ est localement $\Qp$-analytique.
\end{lemm}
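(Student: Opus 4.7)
The plan is to exploit the fact that local $\Qp$-analyticity is a purely local property, so it suffices to produce, for each fixed $g_0 \in G$, an open neighborhood on which $\pi \circ f$ admits a convergent power series expansion in suitable local coordinates. Since $f$ is locally $\Qp$-analytic at $g_0$, the definition recalled in \S\ref{locvec} provides an open neighborhood $U$ of $g_0$, local coordinates $\cbf : U \to \Zp^d$ identifying $U$ with $\Zp^d$ analytically, and a sequence $\{w_\kbf\}_{\kbf \in \NN^d}$ in $W$ with $w_\kbf \to 0$ such that
\[
f(h) = \sum_{\kbf \in \NN^d} \cbf(h)^{\kbf}\, w_\kbf \qquad \text{for all } h \in U.
\]

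The next step is to push this expansion through $\pi$. Since $\pi : W \to X$ is continuous and linear, it is bounded, so there exists $C > 0$ with $\|\pi(w)\|_X \leq C \|w\|_W$ for every $w \in W$. In particular, $\pi(w_\kbf) \to 0$ in $X$, so the candidate series $\sum_{\kbf} \cbf(h)^\kbf \pi(w_\kbf)$ converges absolutely and uniformly on $U$. Furthermore, the partial sums $\sum_{|\kbf| \leq N} \cbf(h)^\kbf w_\kbf$ converge to $f(h)$ in $W$, so by continuity of $\pi$ we obtain
\[
(\pi \circ f)(h) = \pi\Bigl(\sum_{\kbf \in \NN^d} \cbf(h)^{\kbf}\, w_\kbf\Bigr) = \sum_{\kbf \in \NN^d} \cbf(h)^{\kbf}\, \pi(w_\kbf) \qquad \text{for all } h \in U.
\]
This is precisely the required locally $\Qp$-analytic expansion of $\pi \circ f$ at $g_0$ on the same neighborhood $U$ with the same coordinates $\cbf$, and the coefficients $\pi(w_\kbf) \in X$ tend to zero. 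Since $g_0 \in G$ was arbitrary, $\pi \circ f$ is locally $\Qp$-analytic on all of $G$.

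There is no real obstacle here; the only point needing even a moment's thought is the commutation of $\pi$ with the infinite sum, which is the standard statement that a continuous linear map between Banach spaces respects the limit of convergent partial sums.
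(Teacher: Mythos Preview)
Your proof is correct and follows exactly the same approach as the paper's one-line argument: write $f$ locally as $\sum_{\kbf} \cbf(g)^{\kbf} f_{\kbf}$ and apply $\pi$ termwise. You have simply spelled out the justification (boundedness of $\pi$ giving $\pi(w_\kbf)\to 0$, and continuity allowing the interchange of $\pi$ with the limit of partial sums) that the paper leaves implicit.
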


\begin{proof}
Si $f(g) = \sum_{\kbf \in \NN^d} \cbf(g)^{\kbf} f_{\kbf}$, alors $\pi \circ f (g) = \sum_{\kbf \in \NN^d} \cbf(g)^{\kbf} \pi(f_{\kbf})$.
\end{proof}

\begin{prop}
\label{basan}
Soit $G$ un groupe de Lie $p$-adique, et soient $W$ et $B$ deux repr\'esentations de $G$. Si $B$ est un anneau et si $W$ est un $B$-module libre de rang fini, admettant une base $w_1,\hdots,w_d$ telle que $g \mapsto \Mat(g)$ est une fonction globalement analytique $G \to \GL_d(B) \subset \mathrm{M}_d(B)$, alors 

{\rm (i)} $W^{H\dan} = \oplus_{i=1}^d B^{H\dan} \cdot w_i$ si $H$ est un sous-groupe de $G$;

{\rm (ii)} $W^{\la} = \oplus_{i=1}^d B^{\la} \cdot w_i$.
\end{prop}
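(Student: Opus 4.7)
Le plan est d'\'etablir d'abord (i), le r\'esultat (ii) s'en d\'eduisant par passage \`a la limite inductive sur les sous-groupes ouverts compacts $H'\subset G$, puisque $W^{\la}=\cup_{H'} W^{H'\dan}$ et $B^{\la}=\cup_{H'} B^{H'\dan}$, et que l'analyticit\'e globale de $\Mat$ sur $G$ se restreint \`a tout tel sous-groupe.

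Pour (i), je munirai $W$ de sa topologie naturelle de $B$-module libre via la base $(w_i)$, ce qui rend les projections coordonn\'ees $\pi_j : W \to B$ $B$-lin\'eaires continues. L'inclusion $\oplus_i B^{H\dan} w_i \subset W^{H\dan}$ sera la partie ais\'ee : si $b\in B^{H\dan}$, l'\'ecriture
$$g(bw_i)=g(b)\cdot g(w_i)=\sum_j g(b)\,\Mat(g)_{ji}\,w_j$$
exhibe $g\mapsto g(bw_i)$ comme combinaison \`a coefficients analytiques sur $H$ des $w_j$, \`a valeurs dans $W$.

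Pour l'inclusion inverse, l'id\'ee-cl\'e sera d'invoquer le lem.~\ref{mapla}. Partant de $w=\sum_i b_i w_i \in W^{H\dan}$, je l'appliquerai aux projections continues $\pi_j$ pour obtenir que chaque $g\mapsto \pi_j(g(w))$ est analytique \`a valeurs dans $B$. En d\'eveloppant $g(w)$ dans la base $(w_j)$, j'aurai la relation matricielle
$$\bigl(\pi_j(g(w))\bigr)_j = \Mat(g)\cdot \bigl(g(b_i)\bigr)_i,$$
que je r\'esoudrai en $\bigl(g(b_i)\bigr)_i = \Mat(g)^{-1}\bigl(\pi_j(g(w))\bigr)_j$, d'o\`u l'analyticit\'e de $g\mapsto g(b_i)$ et donc $b_i\in B^{H\dan}$.

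L'obstacle principal sera de justifier proprement que $g\mapsto \Mat(g)^{-1}$ reste globalement analytique sur $G$, condition indispensable pour inverser la relation matricielle ci-dessus. Via la formule de Cramer, ce point se ram\`enera \`a l'analyticit\'e de l'inversion dans $B^\times$, standard dans toute alg\`ebre de Banach, l'hypoth\`ese $\Mat(g)\in\GL_d(B)$ garantissant que $\det\Mat(g)\in B^\times$ pour tout $g\in G$.
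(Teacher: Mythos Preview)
Your proposal is correct and follows the paper's approach exactly: reduce (ii) to (i), apply lem.~\ref{mapla} to the coordinate projections $\pi_j:W\to B$ to see that each $g\mapsto \pi_j(g(w))$ is analytic on $H$, then multiply by $\Mat(g)^{-1}$ to recover $g\mapsto g(b_i)$. The paper simply writes the entries $n_{i,j}(h)$ of $\Mat(h)^{-1}$ without further comment, so your flagging of this point already goes beyond what the paper does; note however that analyticity of inversion in a Banach algebra is a priori only \emph{local}, which is enough for (ii) but for (i) with an arbitrary $H$ leaves exactly the same implicit assumption as in the paper.
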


\begin{proof}
Le (ii) \'etant une cons\'equence imm\'ediate du (i), il suffit de prouver le (i).
L'inclusion de $\oplus_{i=1}^d B^{H\dan} \cdot w_i\subset W^{H\dan}$ est \'evidente;
montrons l'autre inclusion. Si $w \in W$, on peut \'ecrire $w= \sum_{i=1}^d b_i w_i$. Soit  $f_i : W \to B$ la fonction $w \mapsto b_i$. \'Ecrivons $\Mat(g)=(m_{i,j}(g))_{i,j}$. Si $h \in H$, alors $h(w) = \sum_{i,j=1}^d h(b_i) m_{i,j}(h) w_j$ et si $w \in W^{H\dan}$, alors $h \mapsto f_j(h(w)) = \sum_{i=1}^d h(b_i) m_{i,j}(h)$ est une fonction globalement analytique $H \to B$ par le lem.~\ref{mapla}. Si $\Mat(h)^{-1}=(n_{i,j}(h))_{i,j}$, alors $h(b_i) = \sum_{j=1}^d f_j(h(w)) n_{i,j}(h)$ et on en d\'eduit bien que  $b_i \in B^{H\dan}$.
\end{proof}

On se donne \`a pr\'esent un sous-groupe compact ouvert $G_1$ de $G$, tel que si l'on pose $G_n = G_1^{p^{n-1}}$ pour $n \geq 1$, alors $G_n$ est un sous-groupe de $G_1$ et tel qu'il existe des coordonn\'ees locales $c_i : G_1 \to \Zp$ telles que $\cbf(G_n) = (p^n \Zp)^d$ pour tout $n \geq 1$. Un tel sous-groupe existe toujours : il suffit par exemple de se donner un sous-groupe compact ouvert $G_0$ de $G$ qui est $p$-valu\'e et satur\'e (cf. le \S\,23 de \cite{PSLG} pour la d\'efinition, et le \S\,27 pour l'existence d'un tel $G_0$) et de poser $G_n=G_0^{p^n}$ (cf.\ les \SS\,23 et 26 de ibid). 

Si $w \in W^{\la}$, il existe $n \geq 1$ tel que $w \in W^{G_n\dan}$ et on peut \'ecrire $g(w) = \sum_{\kbf \in \NN^d} \cbf(g)^{\kbf} w_{\kbf}$ si $g \in G_n$ o\`u $\{ w_{\kbf} \}_{\kbf \in \NN^d}$ est une suite de $W$ telle que $p^{n|\kbf|} w_{\kbf} \to 0$. On pose alors $\|w\|_{G_n} = \sup_{\kbf \in \NN^d} \| p^{n|\kbf|} \cdot w_{\kbf}\|$. Cette norme co\"\i ncide 
avec la norme d\'eduite de l'inclusion $W^{G_n\dan} \to {\mathcal C}^{\an}(G_n,W)$ et ne d\'epend donc pas du choix de coordonn\'ees locales. Par le corollaire 3.3.6 de \cite{MEAMS}, l'inclusion $(W^{G_n\dan})^{G_n\dan} \to W^{G_n\dan}$ est un isomorphisme topologique. Ceci implique en particulier que si $w \in W^{G_n\dan}$, alors $w_\kbf \in W^{G_n\dan}$ pour tout $\kbf \in \NN^d$. 

Le lemme ci-dessous est une cons\'equence imm\'ediate des d\'efinitions.

\begin{lemm}\label{descnm}
Si $w \in W^{G_n\dan}$, alors 

{\rm (i)} $w \in W^{G_m\dan}$ pour tout $m \geq n$, 

{\rm (ii)} $\|w\|_{G_{m+1}} \leq \|w\|_{G_m}$ si $m \geq n$,

{\rm (iii)} $\|w\|_{G_m} = \| w \|$ si $m \gg 0$.
\end{lemm}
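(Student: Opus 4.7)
My plan is to unfold the definition of $\|\cdot\|_{G_n}$ and exploit the single fact that, once $w$ is expanded as $g(w)=\sum_{\kbf}\cbf(g)^{\kbf}w_{\kbf}$ on $G_n$, the coefficients $w_{\kbf}\in W$ do not depend on which smaller group $G_m$ we restrict the orbit map to.

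For \textrm{(i)}, since $\cbf(G_m)=(p^m\Zp)^d\subset(p^n\Zp)^d=\cbf(G_n)$ as soon as $m\geq n$, we have $G_m\subset G_n$; the restriction of the orbit map $g\mapsto g(w)$ to $G_m$ is then given by the same convergent power series, so $w\in W^{G_m\dan}$. For \textrm{(ii)}, using the same coefficients $w_{\kbf}$ we get $\|w\|_{G_m}=\sup_{\kbf}\|p^{m|\kbf|}w_{\kbf}\|$; since $|p^{m+1}|_p\leq|p^m|_p$ each term weakly decreases with $m$, hence so does the supremum.

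For \textrm{(iii)}, I first identify the constant term: evaluating the expansion at $g=1$, where $\cbf(1)=\mathbf{0}$, gives $w_{\mathbf{0}}=w$, so the $\kbf=\mathbf{0}$ term in the supremum contributes exactly $\|w\|$ and therefore $\|w\|_{G_m}\geq\|w\|$ for all $m\geq n$. For the reverse inequality when $m$ is large, set $M=\|w\|_{G_n}=\sup_{\kbf}\|p^{n|\kbf|}w_{\kbf}\|<\infty$; then for $m\geq n$ and $|\kbf|\geq 1$ we have
$$\|p^{m|\kbf|}w_{\kbf}\|=p^{-(m-n)|\kbf|}\cdot\|p^{n|\kbf|}w_{\kbf}\|\leq M\,p^{-(m-n)|\kbf|}\leq M\,p^{-(m-n)},$$
which is $\leq\|w\|$ as soon as $m-n$ is large enough. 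Hence the supremum over $\kbf\neq\mathbf{0}$ is dominated by $\|w\|$ for all $m\gg 0$, and $\|w\|_{G_m}=\|w\|$.

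There is no real obstacle here; the only subtlety is the uniformity in $\kbf$ for \textrm{(iii)}, which is taken care of by the single geometric bound $p^{-(m-n)|\kbf|}\leq p^{-(m-n)}$ valid for $|\kbf|\geq 1$.
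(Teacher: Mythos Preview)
Your proof is correct and is exactly the unwinding the paper has in mind: it simply declares the lemma ``une cons\'equence imm\'ediate des d\'efinitions'' without giving any argument, and the details you supply (same coefficients $w_{\kbf}$ for all $m\geq n$, constant term $w_{\mathbf 0}=w$, and the geometric bound $\|p^{m|\kbf|}w_{\kbf}\|\leq M\,p^{-(m-n)}$ for $|\kbf|\geq 1$) are precisely what that claim amounts to.
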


L'espace $W^{G_n\dan}$, muni de $\|\cdot\|_{G_n}$, est un espace de Banach. 
On a $W^{\la} = \cup_{n \geq 1} W^{G_n\dan}$ et $W^{\la}$ est muni de la topologie de la limite inductive ce qui en fait un espace LB.

\begin{lemm}\label{prodwan}
Si $W$ est un anneau, tel que $\|xy\| \leq \|x\| \cdot \|y\|$ si $x,y \in W$, alors 

{\rm (i)}
 $W^{G_n\dan}$ est un anneau et $\|xy\|_{G_n} \leq \|x\|_{G_n} \cdot \|y\|_{G_n}$ si $x,y \in W^{G_n\dan}$,

{\rm (ii)}
 si de plus $W$ est un corps, alors $W^{\la}$ est aussi un corps.
\end{lemm}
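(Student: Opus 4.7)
Je prouve le (i) par calcul direct. Si $x,y\in W^{G_n\dan}$ ont pour d\'eveloppements $g(x)=\sum_\kbf \cbf(g)^\kbf x_\kbf$ et $g(y)=\sum_\kbf \cbf(g)^\kbf y_\kbf$, alors, $g$ agissant comme morphisme d'anneaux, le produit de Cauchy donne $g(xy)=\sum_\kbf \cbf(g)^\kbf(xy)_\kbf$ avec $(xy)_\kbf=\sum_{\ibf+\jbf=\kbf} x_\ibf y_\jbf$. La d\'ecomposition $p^{n|\kbf|}=p^{n|\ibf|}p^{n|\jbf|}$ et la sous-multiplicativit\'e de $\|\cdot\|$ sur $W$ fournissent
$$\|p^{n|\kbf|}(xy)_\kbf\|\leq \sup_{\ibf+\jbf=\kbf}\|p^{n|\ibf|}x_\ibf\|\cdot\|p^{n|\jbf|}y_\jbf\|\leq \|x\|_{G_n}\|y\|_{G_n},$$
ce sup tendant vers $0$ quand $|\kbf|\to\infty$ puisque l'un au moins des deux indices devient grand. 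On en d\'eduit que $xy\in W^{G_n\dan}$ ainsi que la sous-multiplicativit\'e annonc\'ee.

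Pour le (ii), soit $x\in W^{\la}\setminus\{0\}$, que l'on suppose dans $W^{G_n\dan}$; il s'agit de montrer que $1/x\in W^{G_m\dan}$ pour un $m$ convenable. Comme $g$ agit par automorphisme du corps $W$, on a $g(1/x)=1/g(x)$, et il suffit donc d'\'ecrire un d\'eveloppement analytique de $g\mapsto 1/g(x)$ sur un sous-groupe $G_m$ assez petit. Posant $g(x)=x+u(g)$ avec $u(g)=\sum_{|\kbf|\geq 1}\cbf(g)^\kbf x_\kbf$, la majoration \'el\'ementaire $\|x_\kbf\|\leq p^{n|\kbf|}\|x\|_{G_n}$ jointe \`a $|\cbf(g)^\kbf|\leq p^{-m|\kbf|}$ pour $g\in G_m$ donne $\|u(g)\|\leq p^{n-m}\|x\|_{G_n}$ uniform\'ement sur $G_m$. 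On choisit $m$ assez grand pour avoir $\rho:=p^{n-m}\|x\|_{G_n}/\|x\|<1$; pour tout $g\in G_m$, $g(x)$ est alors inversible dans $W$ et l'on peut \'ecrire
$$g(1/x)=\frac{1}{x}\sum_{k\geq 0}\left(-\frac{u(g)}{x}\right)^k.$$

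L'\'etape principale consiste \`a r\'eorganiser cette double s\'erie en une s\'erie simple $g(1/x)=\sum_\jbf \cbf(g)^\jbf B_\jbf$ en les coordonn\'ees $\cbf(g)$, en regroupant les produits $x_{\kbf_1}\cdots x_{\kbf_k}/x^{k+1}$ selon $\jbf=\kbf_1+\cdots+\kbf_k$. La majoration $\|x_\kbf\|\leq p^{n|\kbf|}\|x\|_{G_n}$ fournit, pour chaque $\jbf\neq 0$, une estimation de la forme $\|B_\jbf\|\leq C_0\cdot (p^n\|x\|_{G_n}/\|x\|)^{|\jbf|}$ avec une constante $C_0$ ind\'ependante de $\jbf$, et donc $\|p^{m|\jbf|}B_\jbf\|\leq C_0\rho^{|\jbf|}\to 0$: on en conclut que $1/x\in W^{G_m\dan}$, donc appartient \`a $W^{\la}$. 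L'obstacle principal est pr\'ecis\'ement la justification de cette r\'eorganisation; il se l\`eve gr\^ace \`a l'ultram\'etricit\'e, qui assure que les sommes partielles sont uniform\'ement born\'ees et que l'interversion des sommations est l\'egitime, l'\'egalit\'e terme \`a terme entre la double s\'erie et la s\'erie en $\cbf(g)^\jbf$ \'etant alors automatique.
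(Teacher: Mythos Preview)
Your argument is correct and follows the same route as the paper: Cauchy product for (i), and for (ii) the geometric series expansion $g(1/x)=\frac{1}{x}\sum_{k\geq 0}(-u(g)/x)^k$ on a small enough subgroup $G_m$. One small correction: since the norm on $W$ is only assumed submultiplicative, the convergence of this series requires $\|u(g)\|\cdot\|1/x\|<1$, and in general $\|1/x\|\geq 1/\|x\|$ with possible strict inequality; your $\rho$ should therefore be $p^{n-m}\|x\|_{G_n}\cdot\|1/x\|$ rather than $p^{n-m}\|x\|_{G_n}/\|x\|$. This changes nothing of substance, as one simply takes $m$ larger. Your explicit justification of the reorganization into a series in $\cbf(g)^{\jbf}$ via ultrametric bounds is a point the paper leaves implicit.
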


\begin{proof}
\'Ecrivons $g(x) = \sum_{\kbf \in \NN^d} \cbf(g)^\kbf x_\kbf$ et $g(y) = \sum_{\kbf \in \NN^d} \cbf(g)^\kbf y_\kbf$. On a alors \[ g(xy) = \sum_{\kbf \in \NN^d} \cbf(g)^\kbf \left( \sum_{\ibf+\jbf=\kbf} x_\ibf y_\jbf \right), \]
et si $\ibf+\jbf=\kbf$, alors $\|p^{n|\kbf|} x_\ibf \cdot y_\jbf \| \leq \|p^{n|\ibf|} x_\ibf \| \cdot \|p^{n|\jbf|} y_\jbf\|$, ce qui implique le (i). 

Pour le (ii), soit $w \in W^{G_n\dan} \setminus \{ 0 \}$ avec $g(w) = \sum_{\kbf \in \NN^d} \cbf(g)^\kbf w_\kbf$. On a 
\[ \frac{1}{g(w)} = \frac{1}{w + \sum_{\kbf \neq 0^d} \cbf(g)^\kbf w_\kbf}  = \frac{1}{w} \cdot \frac{1}{1 + \sum_{\kbf \neq 0^d} \cbf(g)^\kbf \cdot w_\kbf/w}. \]
Ceci implique que $1/w \in W^{G_m \dan}$ d\`es que $m \geq n$ est suffisament grand pour que l'on ait $\sup_{\kbf \neq 0^d} |p^{m|\kbf|} \cdot w_\kbf/w| < 1$ de sorte que $g(1/w) = \sum_{j \geq 0} (-1)^j  (\sum_{\kbf \neq 0^d} \cbf(g)^\kbf w_\kbf/w)^j /w$.
\end{proof}

L'alg\`ebre de Lie de $G$, $\Lie(G)$ agit sur $W^{\la}$.

\begin{lemm}\label{liecont}
Si $D \in \Lie(G)$ et $n \geq 1$, alors $D(W^{G_n \dan}) \subset W^{G_n \dan}$, et il existe une constante $C_D$ telle que $\| D(x) \|_{G_n} \leq C_D \| x \|_{G_n}$ si $x \in W^{G_n \dan}$.
\end{lemm}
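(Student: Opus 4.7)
The plan is to compute $D(w)$ directly from the Taylor expansion $g(w)=\sum_\kbf \cbf(g)^\kbf w_\kbf$ of the orbit map at the identity, and then promote the resulting pointwise formula to a bound for the norm $\|\cdot\|_{G_n}$ by invoking the topological isomorphism $(W^{G_n\dan})^{G_n\dan} \simeq W^{G_n\dan}$ cited just above the statement (Corollary 3.3.6 of \cite{MEAMS}). The key observation is that differentiating at the identity in the direction $D$ singles out the degree-one coefficients $w_{\mathbf{1}_j}$, which that corollary already places in $W^{G_n\dan}$ with controlled $\|\cdot\|_{G_n}$-norm.

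First I fix local coordinates $c_i : G_1 \to \Zp$, identify $\Lie(G)$ with the tangent space of $G$ at the identity via the basis $\partial_j := (\partial/\partial c_j)|_{e}$, and write $D = \sum_{j=1}^d a_j \partial_j$ with $a_j \in \Qp$. For $w \in W^{G_n\dan}$, the relation $\cbf(\exp(tD)) = t(a_1,\ldots,a_d) + O(t^2)$ combined with $g(w) = \sum_\kbf \cbf(g)^\kbf w_\kbf$ gives, on differentiating at $t=0$,
\[ D(w) = \sum_{j=1}^d a_j w_{\mathbf{1}_j}. \]
Corollary 3.3.6 of \cite{MEAMS} ensures that each coefficient $w_\kbf$ already belongs to $W^{G_n\dan}$, so in particular $D(w) \in W^{G_n\dan}$.

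For the quantitative bound I exploit the \emph{topological} part of the same corollary. The norm on $(W^{G_n\dan})^{G_n\dan}$ of a vector with expansion $g\cdot w = \sum_\kbf \cbf(g)^\kbf w_\kbf$ (now viewing the $w_\kbf$ as elements of the Banach space $W^{G_n\dan}$) equals $\sup_\kbf \|p^{n|\kbf|} w_\kbf\|_{G_n}$, and the topological isomorphism with $W^{G_n\dan}$ thus yields a constant $C \geq 1$ independent of $w$ such that $\sup_{\kbf} \|p^{n|\kbf|} w_\kbf\|_{G_n} \leq C \|w\|_{G_n}$. Taking $\kbf = \mathbf{1}_j$ and combining with the formula for $D(w)$ gives $\|D(w)\|_{G_n} \leq C p^n \max_j |a_j|\cdot\|w\|_{G_n}$, so one can take $C_D := C p^n \max_j |a_j|$.

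The only real subtlety is that the naive definition $D(w)=\frac{d}{dt}|_{t=0}\exp(tD)(w)$ a priori lands only in $W$, whereas we need both $D(w) \in W^{G_n\dan}$ and a bound on the finer norm $\|\cdot\|_{G_n}$; both facts are supplied in one stroke by Corollary 3.3.6 of \cite{MEAMS}, and the remainder of the argument is straightforward bookkeeping on multi-indices.
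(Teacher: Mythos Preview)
Your argument is correct. The paper's proof is a one-line citation: it observes that $W^{G_n\dan}$ is itself a Banach space carrying a locally analytic action of $G_n$ (this is exactly the content of Corollary~3.3.6 of \cite{MEAMS}, which you also invoke), and then appeals to Proposition~3.2 of \cite{STAM}, which states that on any Banach space with a locally analytic action, the Lie algebra acts by continuous endomorphisms.

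Your route is more explicit: rather than quoting \cite{STAM}, you unpack what Corollary~3.3.6 of \cite{MEAMS} actually gives. From the Taylor expansion you extract the closed formula $D(w)=\sum_j a_j\,w_{\mathbf 1_j}$, and the \emph{topological} half of that corollary then supplies both $w_{\mathbf 1_j}\in W^{G_n\dan}$ and the bound $\|p^n w_{\mathbf 1_j}\|_{G_n}\leq C\|w\|_{G_n}$. This has the advantage of being self-contained (no second external reference) and of making the constant $C_D=Cp^n\max_j|a_j|$ completely explicit; it also makes visible the formula $D(w)=\sum_j a_j w_{\mathbf 1_j}$, which is precisely how the lemma is used later (e.g.\ in the proofs of th.~\ref{qpanfinf} and th.~\ref{kinfansl}, where one iterates to control $\nabla^\kbf(z)/\kbf!$). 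The paper's approach is shorter but hides this structure inside the cited proposition. Note that your constant $C$ (and hence $C_D$) may depend on $n$, since it comes from the inverse of the isomorphism $(W^{G_n\dan})^{G_n\dan}\to W^{G_n\dan}$; this is harmless because $n$ is fixed in the statement.
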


\begin{proof}
L'espace $W^{G_n\dan}$ est un espace de Banach muni d'une action localement $\Qp$-analytique de $G$ et le lemme r\'esulte alors de la prop.~3.2 de \cite{STAM}.
\end{proof}

\subsection{L'anneau $\bsen$ et la th\'eorie de Sen}\label{bsensec}

Nous donnons ici une l\'eg\`ere g\'en\'eralisation de la construction de l'anneau $\bsen$ de \cite{PCsen}. Soit $\chi : G_K \to \Zp^\times$ un caract\`ere (dans \cite{PCsen}, $\chi$ est le  caract\`ere cyclotomique). Soit $H_K = \ker \chi$ et $K_\infty = \Qpbar^{H_K}$ et $\Gamma_n = \{ g \in \Gal(K_\infty/K)$ tels que $\chi(g) \in 1+p^n \Zp\}$ et soit $K_n = K_\infty^{\Gamma_n}$.

Soit $u$ une variable et soit $\bsen$ l'ensemble des s\'eries formelles $f(u) = \sum_{i \geq 0} a_i u^i$ telles que $a_i \in \Cp$ et qui ont un rayon de convergence non nul. Si $n \geq 1$, l'anneau $\bsen^n$ est l'ensemble des s\'eries formelles de rayon de convergence $p^{-n}$ de telle sorte que $\bsen = \cup_{n \geq 1} \bsen^n$. 
On munit $\bsen^n$ d'une action de $G_{K_n}$ gr\^ace \`a la formule
 $$g(\sum_{i \geq 0} a_i u^i) = \sum_{i \geq 0} g(a_i) (u+\log \chi(g))^i.$$

\begin{theo}\label{pcsenth}
On a $(\bsen^n)^{G_{K_n}} = K_n$.
\end{theo}

\begin{proof}
La d\'emonstration est tout \`a fait analogue \`a celle du (i) du th.~2 de \cite{PCsen} (l'affirmation correspondante quand $\chi$ est le caract\`ere cyclotomique). Nous en rappelons bri\`evement les \'etapes principales. Par le th\'eor\`eme d'Ax-Sen-Tate, on a $\Cp^{H_K} = \hat{K}_\infty$ de telle sorte que si $f(u) = \sum_{i \geq 0} a_i u^i \in (\bsen^n)^{G_{K_n}}$, alors $a_i \in \hat{K}_\infty$. Le fait que $g(f)=f$ implique que pour tout $i \geq 0$, on a $g(a_i) = \sum_{j \geq 0} a_{i+j} \binom{i+j}{i}(-\log \chi(g))^j$. Soient $R_m : \hat{K}_\infty \to K_m$ les traces de Tate normalis\'ees pour $m \geq n$ (cf.\  le \S\,3.1 de \cite{T67}). En appliquant $R_m$ \`a l'\'equation pr\'ec\'edente, on obtient
\[ g(R_m(a_i)) = \sum_{j \geq 0} R_m(a_{i+j}) \binom{i+j}{i}(-\log \chi(g))^j. \]
Le membre de gauche est une fonction de $g$ qui est constante sur $\Gamma_m$, alors que le membre de droite est une fonction analytique sur $\Gamma_n$. Ces fonctions sont donc constantes sur $\Gamma_n$, ce qui fait que  $R_m(a_k)=0$ pour tout $m \geq n$ et $k \geq 1$. Ceci implique que $f=a_0 \in K_n$.
\end{proof}

Si $K_\infty/K$ est l'extension cyclotomique, on peut utiliser l'anneau $\bsen^n$
pour retrouver~\cite{PCsen} les invariants de Sen des $\Qp$-repr\'esentations de $G_K$ (rem.~\ref{rem2}).
Soit $V$ une $\Qp$-repr\'esentation de dimension finie de $G_K$.
Si $n\in\NN$, on note ${\rm D}_{{\rm Sen},n}'(V)$ le $K_n$-espace vectoriel
$${\rm D}_{{\rm Sen},n}'(V)=(\bsen^n\otimes_{\Qp} V)^{G_{K_n}},$$
et on pose $\dsen'(V)=\cup_{n\in\NN}{\rm D}_{{\rm Sen},n}'(V)$.
On peut \'ecrire un \'el\'ement de $\bsen^n$ sous la forme $\alpha^{(0)}+
\alpha^{(1)}u+\cdots$, o\`u les $\alpha^{(i)}$ sont des \'el\'ements de $\Cp$.
Cela permet d'\'ecrire un \'el\'ement $\delta$ de $\dsen'(V)$
sous la forme $\delta^{(0)}+
\delta^{(1)}u+\cdots$, o\`u les $\delta^{(i)}$ sont des \'el\'ements 
de $\Cp\otimes_{\Qp} V$.  On a alors le r\'esultat suivant.

\begin{prop}\label{classique3}
{\rm (i)}  L'application
$\delta\mapsto\delta^{(0)}$ induit un isomorphisme de $K(\mu_{p^\infty})$-espaces
vectoriels de $\dsen'(V)$ sur $\dsen(V)$ et de $K_n$-espaces vectoriels
de ${\rm D}_{{\rm Sen},n}'(V)$ sur ${\rm D}_{{\rm Sen},n}(V)$ si $n$ est assez grand.  

{\rm (ii)} L'isomorphisme inverse
est $d\mapsto\iota(d)=e^{-u\Theta_{\rm Sen}}\cdot d$.
\end{prop}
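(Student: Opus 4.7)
The plan is to construct the inverse map $\iota$ of (ii) explicitly, verify that it lands in $\dsen'(V)$, and then use it to deduce the bijection in (i). For $n$ large enough that there exists a $K_n$-basis of ${\rm D}_{{\rm Sen},n}(V)$ in which $\Theta_{\mathrm{Sen}}$ has integral entries, and $n>1/(p-1)$, I would set, for $d\in{\rm D}_{{\rm Sen},n}(V)$,
\[
\iota(d)=e^{-u\Theta_{\mathrm{Sen}}}d=\sum_{k\geq 0}\frac{(-u)^k}{k!}\Theta_{\mathrm{Sen}}^k d.
\]
The estimate $\vp(k!)\geq -k/(p-1)$ together with the boundedness of $\Theta_{\mathrm{Sen}}$ ensures that this series converges in $\bsen^n\otimes_{\Qp} V$; it in fact lies in $\bsen^n\otimes_{K_n}{\rm D}_{{\rm Sen},n}(V)$.

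To check $\iota(d)\in {\rm D}'_{{\rm Sen},n}(V)$, I would use two standard facts from Sen theory: $\Theta_{\mathrm{Sen}}$ commutes with the $G_K$-action on $\Cp\otimes_{\Qp}V$, and on ${\rm D}_{{\rm Sen},n}(V)$ every $g\in G_{K_n}$ acts through its image in $\Gamma_n$ as $\exp(\log\chi(g)\cdot\Theta_{\mathrm{Sen}})$ (this is, after identifications, the defining property of $\Theta_{\mathrm{Sen}}$; see Remarque~\ref{rem2}(i)). Combining these,
\[
g(\iota(d))=e^{-(u+\log\chi(g))\Theta_{\mathrm{Sen}}}\,g(d)=e^{-(u+\log\chi(g))\Theta_{\mathrm{Sen}}}e^{\log\chi(g)\Theta_{\mathrm{Sen}}}d=\iota(d),
\]
using commutativity of all exponentials of $\Theta_{\mathrm{Sen}}$. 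Since $\iota(d)^{(0)}=d$ by construction, this already shows that $\delta\mapsto\delta^{(0)}$ is surjective onto ${\rm D}_{{\rm Sen},n}(V)$ and that $\iota$ is a right inverse.

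For injectivity, suppose $\delta\in{\rm D}'_{{\rm Sen},n}(V)$ with $\delta^{(0)}=0$. Writing the invariance $g(\delta)=\delta$ as $\sum_i g(\delta^{(i)})(u+\log\chi(g))^i=\sum_i\delta^{(i)} u^i$ and substituting $u=-\log\chi(g)$ yields $g(\delta^{(0)})=\sum_i\delta^{(i)}(-\log\chi(g))^i$. If $\delta^{(0)}=0$, then the analytic function $f(t)=\sum_i\delta^{(i)}t^i$, convergent on $|t|\leq p^{-n}$ with coefficients in the finite-dimensional $\Cp$-space $\Cp\otimes_{\Qp}V$, vanishes at every point of $p^n\Zp$, the image of $G_{K_n}$ under $-\log\chi$. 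Since a power series over $\Cp$ convergent on a closed disk whose zero set has an accumulation point must vanish identically (apply the one-variable identity theorem coordinate-wise in a $\Cp$-basis of $\Cp\otimes V$), we conclude $\delta=0$. Passing to the union over $n$ and using $K_n$-linearity of each evaluation gives the full statement of (i).

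The only delicate point is the identity $g|_{{\rm D}_{{\rm Sen},n}(V)}=\exp(\log\chi(g)\cdot\Theta_{\mathrm{Sen}})$ for $g\in\Gamma_n$, which needs $n$ large enough that $\Gamma_n$ acts by a convergent exponential and that $\log\chi(g)$ sits in the radius of convergence. This is precisely the content of part (iii) of Remarque~\ref{rem2}: for $n$ beyond a threshold depending on $V$, the $K_n$-descent ${\rm D}_{{\rm Sen},n}(V)$ is stable by $\Theta_{\mathrm{Sen}}$ and the $\Gamma_n$-action is given by this exponential. Everything else in the argument is formal manipulation of the two commuting one-parameter families $e^{u\Theta_{\mathrm{Sen}}}$ and $e^{\log\chi(g)\Theta_{\mathrm{Sen}}}$.
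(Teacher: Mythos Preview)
The paper does not supply a proof of this proposition; it is stated as a fact, with the surrounding text pointing to \cite{PCsen}. Your argument is essentially the natural direct verification and is along the lines of what appears in that reference.

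There is, however, one point you skip. You establish that $\delta\mapsto\delta^{(0)}$ is injective on ${\rm D}'_{{\rm Sen},n}(V)$ (your identity-theorem argument) and that its image contains ${\rm D}_{{\rm Sen},n}(V)$ (via $\iota$). But injectivity plus ``image contains $A$'' does not give ``bijection onto $A$''; you still need to show that $\delta^{(0)}\in{\rm D}_{{\rm Sen},n}(V)$ for an \emph{arbitrary} $\delta\in{\rm D}'_{{\rm Sen},n}(V)$. Without this, $\iota$ could a priori have image a proper subspace of ${\rm D}'_{{\rm Sen},n}(V)$.

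The fix is implicit in what you wrote. Your formula
\[
g(\delta^{(0)})=\sum_{i\geq 0}\delta^{(i)}(-\log\chi(g))^i,\qquad g\in\Gamma_n,
\]
shows that $g\mapsto g(\delta^{(0)})$ is analytic on $\Gamma_n$, i.e.\ $\delta^{(0)}$ is a $\Gamma_n$-analytic vector of $(\Cp\otimes_{\Qp}V)^{H_K}$. In dimension~$1$ this forces $\delta^{(0)}\in{\rm D}_{{\rm Sen},n}(V)$ for $n$ large (this is the content of th.~\ref{finisan} and rem.~\ref{dsenn}(i), or equivalently of Sen's original descent). Once you add this sentence, your injectivity and surjectivity statements combine to give the full bijection, and the rest of your argument stands as written.
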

\section{Th\'eorie de Sen et vecteurs localement analytiques}
\label{gensensec}

Dans ce chapitre, nous montrons comment g\'en\'eraliser la th\'eorie de Sen en consid\'erant les vecteurs localement analytiques au lieu des vecteurs finis.

\subsection{Vecteurs finis et vecteurs localement analytiques}
\label{vfvlasec}

Soit $W$ une $\hat{K}_\infty$-repr\'esentation semi-lin\'eaire de dimension finie de $\Gamma_K$. Si $w \in W$, rappelons que l'on dit que $w$ est {\it $K$-fini}
 s'il appartient \`a un sous $K$-espace vectoriel de dimension finie de $W$ qui est stable 
sous $\Gamma_K$. Soit $W^{\fin}$ l'ensemble des vecteurs $K$-finis de $W$. 
On a le r\'esultat suivant d\^u \`a Sen \cite{SN80}.

\begin{prop}
\label{finok}
Si $\dim\Gamma_K=1$, et si $W$ est une $\hat{K}_\infty$-repr\'e\-sentation semi-lin\'eaire de dimension finie de $\Gamma_K$, l'application $\hat{K}_\infty \otimes_{K_\infty} W^{\fin} \to W$ est un isomorphisme.
\end{prop}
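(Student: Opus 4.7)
My plan is to recover Sen's classical result by the Tate--Sen method of successive approximation, producing a basis of $W$ over $\hat{K}_\infty$ whose $K_\infty$-span is a $\Gamma_K$-stable finite-dimensional $K$-subspace.

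The starting point is Tate's normalized traces. Since $\dim\Gamma_K=1$, for $n$ sufficiently large $\Gamma_n$ is procyclic, topologically generated by some $\gamma$, and one has continuous projectors $R_n:\hat{K}_\infty\to K_n$ giving the topological decomposition $\hat{K}_\infty=K_n\oplus X_n$ with $X_n=\ker R_n$. The crucial input is the Tate--Sen estimate: for $n$ large, $\gamma-1$ acts invertibly on $X_n$, and its inverse has operator norm bounded by a constant $c$ independent of $n$ (once $n$ exceeds some explicit threshold depending on the ramification of $K_\infty/K$).

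I would then prove the key change-of-basis statement: for $n$ large enough, there exists a basis $f_1,\dots,f_d$ of $W$ over $\hat{K}_\infty$ such that the matrix of $\gamma$ in this basis belongs to $\GL_d(K_n)$. Fix any basis $e_1,\dots,e_d$ and write the matrix of $\gamma$ as $A=I+\alpha$; by continuity of the $\Gamma_K$-action and the fact that $\gamma\to 1$ as $n\to\infty$, we can assume $\|\alpha\|$ is as small as desired. Look for a correction $B=I+\beta\in\GL_d(\hat{K}_\infty)$, with $\|\beta\|$ small, such that the matrix $B^{-1}A\gamma(B)$ of $\gamma$ in the new basis $(f_j)=(e_i)B$ lies in $\GL_d(K_n)$. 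Expanding, the condition to first order is $\gamma(\beta)-\beta=R_n(\alpha)-\alpha\in \M_d(X_n)$, which can be solved uniquely with $\beta\in \M_d(X_n)$ by inverting $\gamma-1$ on $X_n$; the higher-order terms are then handled by an iterative Newton-type scheme converging because $\|\alpha\|$ is small compared to $c^{-1}$.

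Once this is done, $W_n:=\bigoplus_{i=1}^d K_n f_i$ is $\Gamma_n$-stable and finite-dimensional over $K_n$, hence $a fortiori$ over $K$. To produce a $\Gamma_K$-stable subspace, I form $W_0:=\sum_{\gamma\in\Gamma_K/\Gamma_n}\gamma(W_n)$, a finite sum since $[\Gamma_K:\Gamma_n]<\infty$; it is $\Gamma_K$-stable, finite-dimensional over $K_n$, hence over $K$, and contains $f_1,\dots,f_d$. Therefore every $f_i$ is $K$-finite and $W^{\fin}$ contains a $\hat{K}_\infty$-basis of $W$, giving surjectivity of $\hat{K}_\infty\otimes_{K_\infty}W^{\fin}\to W$.

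For injectivity, suppose $w_1,\dots,w_k\in W^{\fin}$ are $K_\infty$-linearly independent and satisfy a nontrivial $\hat{K}_\infty$-linear relation $\sum c_i w_i=0$ of minimal length with some $c_j=1$. Applying $\gamma\in\Gamma_K$ and subtracting gives a shorter relation $\sum(\gamma(c_i)-c_i)w_i=0$ among the remaining $w_i$'s, whose coefficients lie in the $\Gamma_K$-stable finite-dimensional $K$-space containing the $w_i$'s; by minimality all $\gamma(c_i)=c_i$, so each $c_i\in \hat{K}_\infty^{\Gamma_K}=\Cp^{G_K}=K\subset K_\infty$, contradicting $K_\infty$-linear independence. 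The main obstacle is the successive approximation step: one has to set up carefully the quantitative Tate--Sen inequality on $X_n$ and verify convergence of the iteration, which is where the hypothesis $\dim\Gamma_K=1$ is essential (it is exactly the existence of normalized traces with the required decay that fails in higher rank, as noted in the introduction).
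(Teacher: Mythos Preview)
The paper does not actually prove this proposition: it is stated as a result of Sen and simply referenced to~\cite{SN80}. Your outline via Tate's normalised traces and successive approximation is precisely Sen's original method, and the surjectivity half (producing a basis $f_1,\dots,f_d$ of $W$ over $\hat{K}_\infty$ on which $\gamma$ acts by a matrix in $\GL_d(K_n)$, then saturating under $\Gamma_K/\Gamma_n$) is correct as sketched.

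Your injectivity argument, however, does not work as written. When you apply $\gamma$ to $\sum c_i w_i = 0$ you obtain $\sum \gamma(c_i)\,\gamma(w_i)=0$, not $\sum \gamma(c_i)\,w_i=0$: the action is semilinear and the $w_i$, being merely $K$-finite, are not fixed by $\Gamma_K$. So subtracting does not produce the shorter relation $\sum(\gamma(c_i)-c_i)w_i=0$ that you claim, and the descent-style conclusion $c_i\in\hat{K}_\infty^{\Gamma_K}=K$ is unjustified. (The aside that the new coefficients ``lie in the $\Gamma_K$-stable finite-dimensional $K$-space containing the $w_i$'s'' is also confused: the coefficients live in $\hat{K}_\infty$, not in a subspace of $W$.)

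The clean fix is to reduce injectivity to the base case $\hat{K}_\infty^{\fin}=K_\infty$. Once you have the basis $f_1,\dots,f_d$ with $\Mat(\gamma)\in\GL_d(K_n)$, the same coordinate argument as in prop.~\ref{basan} (with ``$K$-finite'' in place of ``analytic on $H$'') gives $W^{\fin}=\bigoplus_i \hat{K}_\infty^{\fin}\cdot f_i$. The identity $\hat{K}_\infty^{\fin}=K_\infty$ is then proved directly from the normalised traces $R_m$: if $x\in\hat{K}_\infty$ is $K$-finite, its $\Gamma_K$-orbit spans a finite-dimensional $K$-space on which some $\Gamma_m$ acts, and one checks using $R_{m+k}$ that $x\in K_m$. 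With this in hand, $\hat{K}_\infty\otimes_{K_\infty}W^{\fin}\to W$ carries the basis $1\otimes f_i$ to the basis $f_i$, hence is an isomorphism.
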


Nous donnons \`a pr\'esent le lien entre $W^{\fin}$ et les vecteurs localement analytiques de $W$.

\begin{theo}
\label{finisan}
Si $\dim\Gamma_K=1$, et si $W$ est une $\hat{K}_\infty$-repr\'e\-sentation semi-lin\'eaire de dimension finie de $\Gamma_K$, alors $W^{\fin} = W^{\la}$.
\end{theo}

\begin{proof}
Montrons d'abord l'\'egalit\'e pour $W=\hat{K}_\infty$. Soient $R_n : \hat{K}_\infty \to K_n$ les traces de Tate normalis\'ees (cf.\ le \S\,3.1 de \cite{T67}), de telle sorte que si $x \in \hat{K}_\infty$, alors $x = \lim_{n \to +\infty} R_n(x)$. Les applications $\{R_n\}_{n \gg 0}$ commutent avec $\Gamma_K$. Soit $x$ un vecteur $\Qp$-analytique pour un sous-groupe $\Gamma_m \subset \Gamma_K$. Si $k \geq 1$, alors $R_{m+k}(x)$ est un vecteur $\Qp$-analytique pour $\Gamma_m$ et un vecteur constant pour $\Gamma_{m+k}$, et c'est donc un vecteur constant pour $\Gamma_m$. Ceci implique que $R_{m+k}(x) \in K_m$ pour tout $k \geq 1$, et donc que $x \in K_m$.

Revenons \`a pr\'esent au cas g\'en\'eral. Si $w \in W^{\fin}$, alors $w$ vit dans une repr\'esentation 
$\Qp$-lin\'eaire de dimension finie $V_w$ de $\Gamma_K$. Par le th\'eor\`eme de 
Cartan (cf.\ la prop.~3.6.10 de \cite{MEAMS} ou le \S\,V.9 de \cite{SLALG}), les vecteurs de $V_w$ sont 
localement $\Qp$-analytiques,
et le lem.~\ref{mapla} implique que $w \in W^{\la}$. On a donc $W^{\fin}\subset W^{\la}$.

Pour montrer l'inclusion dans l'autre sens, fixons une base $e_1,\hdots,e_d$ de $W^{\fin}$ sur $K_\infty$.
 Les $e_i$ forment une base de $W$ sur $\hat{K}_\infty$ par le th\'eor\`eme de Sen, et la prop.~\ref{basan} 
implique que $W^{\la} = \oplus_{i=1}^d \hat{K}^{\la}_\infty \cdot e_i$. Comme $\hat{K}^{\la}_\infty = K_\infty$, on a $W^{\la} = W^{\fin}$.
\end{proof}

\begin{rema}\label{dsenn}
{\rm (i)}
 Si $\Gamma_K$ et $W$ sont comme ci-dessus, les prop.~\ref{basan} et \ref{finok} et le th.~\ref{finisan} impliquent que pour $n$ assez grand, 
$W^{\Gamma_n\dan}$ est un $K_n$-espace vectoriel stable par $\Gamma_n$
et tel que $W = \hat{K}_\infty \otimes_{K_n} W^{\Gamma_n\dan}$. 
On en d\'eduit que, si $W=(\Cp\otimes_{\Qp}V)^{H_K}$
o\`u $V$ est une $\Qp$-repr\'esentation de $G_K$, cet espace n'est autre que
l'espace ${\rm D}_{{\rm Sen},n}(V)$ du (iii) de la rem.~\ref{rem2}.

{\rm (ii)} On peut retrouver le module ${\rm D}_{{\rm dif},n}^+(V)$ de la m\^eme mani\`ere,
comme limite projective des $W_k^{\Gamma_n\dan}$, avec $W_k=((\bdr^+/t^k)\otimes_{\Qp} V)^{H_K}$. 
\end{rema}

\subsection{Une g\'en\'eralisation de la th\'eorie de Sen}
\label{senrem}

Nous donnons maintenant notre g\'en\'eralisation du th\'eor\`eme de Sen, quand $\Gamma_K$ est un groupe de Lie $p$-adique de dimension quelconque, en utilisant $W^{\la}$ et non plus  $W^{\fin}$. Rappelons que $\hat{K}_\infty^{\la}$ est un corps par le (2) du lem.~\ref{prodwan}.

\begin{theo}
\label{anisok}
Si $W$ est une $\hat{K}_\infty$-repr\'esentation semi-lin\'eaire de dimension finie de $\Gamma_K$, alors l'application $\hat{K}_\infty \otimes_{\hat{K}_\infty^{\la}} W^{\la} \to W$ est un isomorphisme.
\end{theo}

Commen\c{c}ons par montrer que l'on peut remplacer $K$ par une extension finie.

\begin{lemm}
\label{ladescet}
Si $W$ est une $\hat{K}_\infty$-repr\'esentation semi-lin\'eaire de dimension finie de $\Gamma_K$, si $L/K$ est une extension galoisienne finie, si $W_L = \hat{L}_\infty \otimes_{\hat{K}_\infty} W$, et si $W_L^{\la}$ est un $\hat{L}_\infty^{\la}$-espace vectoriel de dimension finie, alors $W_L^{\la} = \hat{L}_\infty^{\la} \otimes_{\hat{K}_\infty^{\la}} W^{\la}$.
\end{lemm}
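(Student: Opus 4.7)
The plan is to reduce to classical Galois descent for the finite extension $\hat{L}_\infty/\hat{K}_\infty$, transferred to the level of locally analytic vectors. Since both $L/K$ and $K_\infty/K$ are Galois, so is $L_\infty = L \cdot K_\infty$; set $\tilde\Gamma = \Gal(L_\infty/K)$, which fits in a short exact sequence
$$1 \to H \to \tilde\Gamma \to \Gamma_K \to 1,$$
with $H := \Gal(L_\infty/K_\infty)$ finite, and contains $\Gamma_L$ as an open normal subgroup (normality because $L/K$ is Galois). Inflating $W$ along $\tilde\Gamma \twoheadrightarrow \Gamma_K$ and extending scalars endows $W_L$ with a natural semilinear $\tilde\Gamma$-action; since $\Gamma_L$ is open in $\tilde\Gamma$, the la-vectors of $W_L$ are the same whether one works with $\Gamma_L$ or $\tilde\Gamma$, and the same applies to $\hat{L}_\infty^{\la}$.

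Next I would verify that $H$ stabilizes $\hat{L}_\infty^{\la}$ and $W_L^{\la}$, and identify their $H$-invariants. For stability: for $h \in H$ and $w \in W_L^{\la}$, the orbit map $g \mapsto g(hw) = h\bigl(h^{-1}gh \cdot w\bigr)$ is the composition of the analytic inner automorphism $g \mapsto h^{-1}gh$ of $\Gamma_L$ (normality) with the $\Gamma_L$-orbit map of $w$ and the continuous $\Qp$-linear map $h:\hat{L}_\infty \to \hat{L}_\infty$, hence is analytic by lem.~\ref{mapla}. For the invariants: Ax–Sen–Tate plus ordinary descent give $\hat{L}_\infty^H = \hat{K}_\infty$ and $W_L^H = W$. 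If $x \in (\hat{L}_\infty^{\la})^H \subset \hat{K}_\infty$, the analytic Taylor coefficients of its $\Gamma_L$-orbit map lie in the closed subspace $\hat{K}_\infty$ of $\hat{L}_\infty$, so the orbit map is analytic as a $\hat{K}_\infty$-valued map, and openness of $\Gamma_L$ in $\Gamma_K$ yields $x \in \hat{K}_\infty^{\la}$; the reverse inclusion uses lem.~\ref{lamap}. The same argument with $W \subset W_L$ in place of $\hat{K}_\infty \subset \hat{L}_\infty$ gives $(W_L^{\la})^H = W^{\la}$.

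Finally, $\hat{L}_\infty^{\la}$ is a field by lem.~\ref{prodwan}, on which $H$ acts faithfully (already on $L \subset \hat{L}_\infty^{\la}$), so by Artin's theorem $\hat{L}_\infty^{\la}/\hat{K}_\infty^{\la}$ is a finite Galois extension of group $H$. Since by hypothesis $W_L^{\la}$ is a finite-dimensional $\hat{L}_\infty^{\la}$-vector space carrying a compatible semilinear $H$-action, Hilbert~90 descent for finite Galois extensions gives
$$W_L^{\la} = \hat{L}_\infty^{\la} \otimes_{\hat{K}_\infty^{\la}} (W_L^{\la})^H = \hat{L}_\infty^{\la} \otimes_{\hat{K}_\infty^{\la}} W^{\la},$$
as desired. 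The main delicate step is the identification of the $H$-invariants of the la-vectors, where one must trade analyticity into $\hat{L}_\infty$ for analyticity into the closed subspace $\hat{K}_\infty$; the purely algebraic descent at the end is then automatic.
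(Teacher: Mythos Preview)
Your proof is correct and follows essentially the same route as the paper: establish that $\hat{L}_\infty^{\la}/\hat{K}_\infty^{\la}$ is a finite Galois extension with group $H=\Gal(L_\infty/K_\infty)$, identify $W^{\la}=(W_L^{\la})^H$, and conclude by Hilbert~90/\'etale descent. The only cosmetic difference is that the paper invokes prop.~\ref{basan} (applied to $\hat{L}_\infty$ as a finite free $\hat{K}_\infty$-module) to get finiteness of $\hat{L}_\infty^{\la}$ over $\hat{K}_\infty^{\la}$ directly, whereas you reach the same conclusion via Artin's theorem after first identifying the $H$-invariants; your write-up is more explicit about the group theory and the passage of analyticity from $\hat{L}_\infty$ to the closed subspace $\hat{K}_\infty$, which the paper leaves implicit.
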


\begin{proof}
L'extension $\hat{L}_\infty^{\la} / \hat{K}_\infty^{\la}$ est une extension galoisienne finie par la prop.~\ref{basan}. On a  $W^{\la} = (W_L^{\la})^{\Gal(\hat{L}_\infty^{\la} / \hat{K}_\infty^{\la})}$ et le lemme r\'esulte de la descente \'etale (cf.\  par exemple le \S\,2.2 de \cite{LB11}).
\end{proof}

\begin{proof}[D\'emonstration du th.~\ref{anisok}]
Soient $L_\infty = K_\infty(\mu_{p^\infty})$ et $\Delta_K = \Gal(L_\infty/K_\infty)$. Quitte \`a remplacer $K$ par une extension finie, on peut supposer ou bien que $L_\infty=K_\infty$ (et donc $\Delta_K=\{1\}$) ou bien que le caract\`ere cyclotomique $\chi_{\cyc}$ induit un isomorphisme de $\Delta_K$ sur $1+2p^n\Zp$, avec $n\geq 1$.

Soit $X= \hat{L}_\infty \otimes_{\hat{K}_\infty} W$ et $\dsen(W) = (X^{\Gal(L_\infty/K(\mu_{p^\infty}))})^{\fin}$ 
le module de Sen cyclotomique associ\'e \`a $W$. Par la th\'eorie de Sen classique, 
on a $X = \hat{L}_\infty \otimes_{K(\mu_{p^\infty})} \dsen(W)$ ce qui fait que 
$X^{\la} = \hat{L}_\infty^{\la} \otimes_{K(\mu_{p^\infty})} \dsen(W)$ par la prop.~\ref{basan}. On a donc 
\[ W^{\la} = (\hat{L}_\infty^{\la} \otimes_{K(\mu_{p^\infty})} \dsen(W))^{\Delta_K}. \]
Si $L_\infty=K_\infty$ (et donc $\Delta_K=\{1\}$), cela permet de conclure.
Si $L_\infty\neq K_\infty$, nous aurons besoin du r\'esultat suivant.

\begin{lemm}\label{zla}
Quitte \`a remplacer $K$ par une extension finie, on peut trouver
 $z\in \hat{L}_\infty^{\la}$ tel que
$g(z) = z + \log \chi_{\cyc}(g)$ pour tout $g\in \Delta_K$.
\end{lemm}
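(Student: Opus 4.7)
The plan is to produce $z$ as a \emph{log-period} element in $\hat{L}_\infty^{\la}$, realising $\log\chi_{\cyc}$ as an additive coboundary for $\Delta_K$.

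First, I reduce the problem. By enlarging $K$ as the statement allows, I may assume $\chi_{\cyc}$ induces an isomorphism $\Delta_K \simeq 1 + 2p^n\Zp$ with topological generator $\gamma_0$ satisfying $\chi_{\cyc}(\gamma_0) = 1+2p^n$. It then suffices to produce $z \in \hat{L}_\infty^{\la}$ with $(\gamma_0-1)(z) = \log\chi_{\cyc}(\gamma_0)$: for a general $g = \gamma_0^a \in \Delta_K$ with $a \in \Zp$, the desired relation $g(z) = z + \log\chi_{\cyc}(g)$ then follows by iteration for $a \in \ZZ_{\geq 0}$ and by $p$-adic continuity in $a$.

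Second, the construction of $z$. The model is the Fontaine period $t = \log[\varepsilon] \in \bcris^+$, which satisfies $g(t) = \chi_{\cyc}(g)\cdot t$: formally $z = \log t$ would give the correct cocycle, but $t$ does not live in $\hat{L}_\infty$. I would instead construct $z$ directly: pick a compatible system $(\zeta_{p^m})_m$ of roots of unity, set $\pi_m = \zeta_{p^m}-1 \in L_\infty$, and build $z$ as a convergent $p$-adic series of carefully normalised logarithmic combinations of the $\pi_m$'s. Tate's normalised traces $R_m^{\Delta}$ for the $\Zp$-extension $L_\infty/K_\infty$ (a genuine cyclotomic-type $\Zp$-extension of $K_\infty$) provide the tool to ensure both that the series converges in $\hat{L}_\infty$ and that the action of $\gamma_0$ telescopes to produce the required constant $\log\chi_{\cyc}(\gamma_0)$ in the limit.

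Third, I verify local analyticity. Analyticity along $\Delta_K$ is automatic from the target cocycle formula, which is affine in the analytic coordinate $\log\chi_{\cyc}(g)$. For analyticity under the commuting action of $\Gal(K_\infty/K)$, I would use that each $\pi_m$ and each normalised trace $R_m^{\Delta}$ is $\Gal(K_\infty/K)$-equivariant in a controlled manner, and apply Lemma~\ref{liecont} to bound the resulting LA-norms $\|\cdot\|_{G_n}$ on the approximating sequence uniformly.

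The main obstacle is the construction of $z$ itself: Tate's theorem implies that $[\log\chi_{\cyc}]$ is a non-trivial class in $H^1(\Delta_K, \Cp)$, so no explicit element of $\Cp$ or $L_\infty$ can work directly. The construction must essentially exploit the additional room provided by the ramification of $K_\infty/K$, and obtaining simultaneous control of convergence in $\hat{L}_\infty$ and of local analyticity for the full group $\Gal(L_\infty/K)$ is the delicate technical point.
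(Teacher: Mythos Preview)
Your plan has a genuine gap at the decisive step, the construction of $z$. The ingredients you name --- the elements $\pi_m=\zeta_{p^m}-1$ and Tate's normalised traces $R_m^{\Delta}$ for $L_\infty/K_\infty$ --- cannot by themselves produce such an element. Any convergent series in the $\pi_m$ lands in $\widehat{K(\mu_{p^\infty})}$, and Tate's computation for the cyclotomic $\Zp$-extension says precisely that $[\log\chi_{\cyc}]$ is \emph{nontrivial} in $H^1\big(\Gal(K(\mu_{p^\infty})/K),\widehat{K(\mu_{p^\infty})}\big)$; since after enlarging $K$ the restriction identifies $\Delta_K$ with $\Gal(K(\mu_{p^\infty})/K)$, no $z$ lying in $\widehat{K(\mu_{p^\infty})}$ can work. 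The traces $R_m^{\Delta}$ do not rescue this: you offer no input from $\hat{L}_\infty\setminus\widehat{K(\mu_{p^\infty})}$ to feed them. Your closing remark that one must ``exploit the additional room provided by the ramification of $K_\infty/K$'' is exactly the point, but it is a restatement of the difficulty rather than a construction. (Your formulation of the obstacle is also slightly off: the class \emph{is} trivial in $H^1(\Delta_K,\hat{L}_\infty)$ --- that is what the lemma asserts --- so the issue is not cohomological obstruction but finding the right element.)

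The paper supplies that missing input by an indirect route that has nothing to do with roots of unity. It fixes a \emph{faithful} finite-dimensional $\Qp$-representation $V$ of $\Gamma_K$, forms its cyclotomic Sen module $\dsen(V)$, and uses the inclusion $V\subset \hat{L}_\infty^{\la}\otimes_{K(\mu_{p^\infty})}\dsen(V)$ established just before the lemma. Expressing a $\Qp$-basis of $V$ in a Jordan basis of $\dsen(V)$ for $\Theta_{\mathrm{Sen}}$ yields change-of-basis coefficients lying in $\hat{L}_\infty^{\la}$. Since $\Delta_K$ fixes $V$ but acts on $\dsen(V)$ through $\Theta_{\mathrm{Sen}}$, and since $\Theta_{\mathrm{Sen}}\neq 0$, one such coefficient $x_s$ satisfies $\chi_{\cyc}(g)^s\,g(x_s)=x_s$ for some $s\neq 0$, and then $z=-\tfrac{1}{s}\log$ of the unit part of $x_s$ does the job; if instead all eigenvalues vanish, a coefficient already transforms by $z\mapsto z+\log\chi_{\cyc}(g)$. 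The nonvanishing of $\Theta_{\mathrm{Sen}}$ on a faithful $\Gamma_K$-representation is the concrete form of the ``ramification of $K_\infty/K$'' you wanted to invoke.
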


Admettons le lemme et terminons la d\'emonstration du th\'eor\`eme.
Choisissons $z_0\in L_\infty$ assez proche de $z$ pour que la s\'erie $\exp((z_0-z)\Theta_{\rm Sen})$
converge vers un op\'erateur de $\dsen(W)$.  Choisissons aussi une base $e_1,\dots,e_d$ de $\dsen(W)$
sur $K(\mu_{p^\infty})$.  Quitte \`a augmenter $K$, on peut supposer
que $z_0\in K$, et que $f_i=\exp((z_0-z)\Theta_{\rm Sen})\cdot e_i$ est fixe par $\Delta_K$
pour tout $i$ (en effet, $f_i$ est tu\'e par $\Theta_{\rm Sen}$ et donc fixe par un sous-groupe ouvert
de $\Delta_K$ que l'on peut supposer \^etre \'egal \`a $\Delta_K$ quitte \`a augmenter $K$).
On a alors $\hat{L}_\infty^{\la} \otimes_{K(\mu_{p^\infty})} \dsen(W)=\oplus_{i=1}^d\hat{L}_\infty^{\la} \cdot f_i$ avec $f_i \in W^{\la}$,
et donc
\[X^{\la}= \hat{L}_\infty^{\la} \otimes_{K(\mu_{p^\infty})} \dsen(W) 
= \hat{L}_\infty^{\la} \otimes_{\hat{K}_\infty^{\la}} (\hat{L}_\infty^{\la} \otimes_{K(\mu_{p^\infty})} \dsen(W))^{\Delta_K}
=\hat{L}_\infty^{\la} \otimes_{\hat{K}_\infty^{\la}} W^{\la}. \]
Ceci implique que $X = \hat{L}_\infty \otimes_{\hat{K}_\infty^{\la}} W^{\la}$ et,
en prenant les points fixes sous $\Delta_K$, que $W = \hat{K}_\infty \otimes_{\hat{K}_\infty^{\la}} W^{\la}$.
\end{proof}

\begin{proof}[D\'emonstration du lem.~\ref{zla}]
Soit $V$ une $\Qp$-repr\'esentation fid\`ele de $\Gamma_K$, de dimension finie
(si $\dim V=d$, cela \'equivaut, modulo le choix d'une base de $V$ \`a se donner
une injection de $\Gamma_K$ dans $\GL_d(\Qp)$).
On peut voir $V$ comme une repr\'esentation de $G_K$ et il r\'esulte de l'isomorphisme
ci-dessus, appliqu\'e \`a $W=\hat{K}_\infty\otimes_{\Qp} V$, que 
$$\hat{K}_\infty^\la\otimes_{\Qp} V=(\hat{L}_\infty^\la\otimes_{K(\mu_{p^\infty})}\dsen(V))^{\Delta_K}.$$
L'op\'erateur $\Theta_{\rm Sen}$ sur $\dsen(V)$ n'est pas nul
et, quitte \`a remplacer $K$ par une extension finie, 
on peut supposer que $K$ contient ses valeurs
propres.  Il y a alors deux cas:

$\bullet$ $\Theta_{\rm Sen}$ a une valeur propre non nulle $s$.  En d\'ecomposant une base de $V$
sur une base de $\dsen(V)$ dans laquelle la matrice de $\Theta_{\rm Sen}$ est sous forme de Jordan,
on en d\'eduit l'existence de $x_s\in \hat{L}_\infty^\la$ tel que l'on ait
$\chi_{\rm cycl}(g)^s g(x_s)=x_s$, pour tout $g$ dans un sous-groupe ouvert
de $\Delta_K$ (que l'on peut supposer \^etre \'egal \`a $\Delta_K$ en rempla\c cant $K$
par une extension finie).  Si on \'ecrit $x_s$ sous la forme $x_s^0(1+y)$, avec $|y|_p<1$
et $\log x_s^0=0$, alors la s\'erie $-\frac{1}{s}\log(1+y)$ converge dans $\hat{L}_\infty^\la$
d'apr\`es le lemme~\ref{descnm}, et sa somme v\'erifie les propri\'et\'es voulues.

$\bullet$ Toutes les valeurs propres de $\Theta_{\rm Sen}$ sont nulles.
 La d\'ecomposition d'une base
de $V$ comme ci-dessus fournit alors directement un \'el\'ement tel que
$g(z) = z + \log \chi_{\cyc}(g)$ pour tout $g$ dans un sous-groupe ouvert de $\Delta_K$.
\end{proof}

\subsection{Remarques sur la th\'eorie de Schneider-Teitelbaum}
D'apr\`es~\cite{ST03}, si on se place dans la cat\'egorie des repr\'esentations admissibles
de $\Gamma_K$, le foncteur $\Pi\mapsto\Pi^{\la}$ a de bonnes propri\'et\'es : il est
exact et $\Pi^{\la}$ est dense dans $\Pi$.  Si on sort du cadre admissible, la situation
est nettement moins agr\'eable, comme on peut le prouver en utilisant le th.~\ref{finisan}.

Pla\c{c}ons-nous dans le cas de l'extension cyclotomique et posons $U=(\bcris^+)^{\varphi=p}$. La suite exacte $0\to\Qp(1)\to U\to\Cp\to 0$ induit,
en prenant les points fixes sous l'action de $H_K$, la suite exacte
$$0\to \Qp(1)\to U^{H_K}\to \hat K_\infty\to H^1(H_K,\Qp(1))\to 0.$$
Or $(U^{H_K})^{\la}$ est r\'eduit \`a $\Qp(1)$ (en effet, si $x\in U^{H_K}$ est localement
analytique, son image dans $\hat{K}_\infty$ appartient \`a $K_\infty$ et donc est
fixe par un \'el\'ement $\gamma\neq 1$ de $\Gamma_K$, et comme la valeur propre de $\gamma$ sur
$\Qp(1)$ n'est pas $1$, on peut trouver $x'\in x+\Qp(1)$ qui est fixe par $\gamma$,
ce qui implique $x'\in K_\infty$ et donc $x'=0$ puisque $\varphi(x')=p x'$).  En particulier
$(U^{H_K})^{\la}$ n'est pas dense dans $U^{H_K}$ ce qui prouve que la densit\'e
de $\Pi^{\la}$ dans $\Pi$ n'est pas assur\'ee si on sort du cadre admissible.

Maintenant,
$H^1(H_K,\Qp(1))$ est une repr\'esentation admissible de $\Gamma_K$ :
son dual est $H^1(G_K,\Zp\dcroc{\Gamma_K}[\frac{1}{p}])$, qui est un 
$\Zp\dcroc{\Gamma_K}[\frac{1}{p}]$-module libre de rang~$d=[K:\Qp]$ \`a des $\Qp$-espaces
de dimension finie pr\`es, et donc $H^1(H_K,\Qp(1))\cong {\mathcal C}^0(\Gamma_K,\Qp)^d$ \`a des
$\Qp$-espaces
de dimension finie pr\`es.  Cette description prouve que $H^1(H_K,\Qp(1))^{\la}$ contient des vecteurs
non localement constants, contrairement \`a $\hat K_\infty$.
L'exactitude du foncteur
$W\mapsto W^{\la}$ peut donc \^etre mise en d\'efaut si on se permet des repr\'esentations
non admissibles.

\section{Calcul de $\hat{K}_\infty^{\la}$ dans le cas Lubin-Tate}
\label{calanlt}

Dans ce chapitre, nous consid\'erons le cas o\`u $K_\infty$ est engendr\'e par les points de torsion d'un groupe de Lubin-Tate.
\subsection{Extensions de Lubin-Tate}
\label{ltper}

Soit $F \subset K$ tel que l'extension $F/\Qp$ est galoisienne et soit $h=[F:\Qp]$. Soit $\emb$ l'ensemble des plongements de $F$ dans $\Qpbar$.  Soit $\LT$ un $\OO_F$-module formel associ\'e \`a une uniformisante $\pi_F$ de $\OO_F$ et soit $F_\infty$ l'extension de $F$ engendr\'ee par les points de $\pi_F^n$-torsion de $\LT$ pour $n \geq 1$. Soient $H_F = \Gal(\Qpbar/F_\infty)$ et  $\Gamma_F = \Gal(F_\infty / F)$. Par la th\'eorie de Lubin-Tate  (le th.~2 de \cite{LT65}), $\Gamma_F$ est isomorphe \`a $\OO_F^\times$, via le caract\`ere de  Lubin-Tate $\chi_F : \Gamma_F \to \OO_F^\times$. Si $\tau \in \emb$, soit $\chi_F^\tau = \tau \circ \chi_F$.

\begin{theo}
\label{peremb}
Si $\tau \neq \Id$, alors il existe un \'el\'ement $u_{\tau} \in \hat{F}_\infty^\times$ tel que $g(u_\tau) = \chi_F^\tau(g) \cdot u_\tau$ si $g \in \Gamma_F$. Si $\tau = \Id$, alors il n'existe pas de tel \'el\'ement.
\end{theo}

\begin{proof}
Voir le \S\,3.2 de \cite{LF09} pour le cas $\tau \neq \Id$ et le \S\,3.4 de ibid.\ pour $\tau=\Id$ (remarquons ceci dit que ces r\'esultats remontent \`a Tate, voir le \S\,4 de \cite{T67}).
\end{proof}

Soit $G_n = 1 + p^n \OO_F$ pour $n \geq 1$. On pose $F_n = F_\infty^{G_n}$ (de sorte que $F_n=F(\LT[\pi_F^{en}])$ o\`u $e$ est l'indice de ramification absolu de $F$). 
La fonction $\log$ donne lieu \`a un isomorphisme analytique de groupes
 $\log : G_n \to p^n \OO_F$ pour $n \geq 1$. Si $g \in G_n$, soit $\ell(g) = \log(g)$. 
Si $\tau \in \emb$, alors on dispose de la {\og d\'erivation dans la direction $\tau$ \fg}
 qui est un \'el\'ement $\nabla_\tau \in F \otimes_{\Qp} \Lie(\Gamma_F)$. On peut le
 construire de la mani\`ere suivante (d'apr\`es le \S\,3.1 de \cite{DI12}). Si $W$ est
 une $F$-repr\'esentation de $G_n$ et si $w \in W^{\la}$, alors il existe $m \gg 0$ 
et des \'el\'ements $\{w_\kbf \}_{\kbf \in \NN^{\emb}}$ tels que si $g \in G_m$, alors 
$g (w) = \sum_{\kbf \in \NN^{\emb}} \ell(g)^\kbf w_\kbf$, o\`u $\ell(g)^\kbf = \prod_{\tau \in \emb} \tau \circ \ell(g)^{k_\tau}$. 
On pose alors $\nabla_\tau(w) = w_{\mathbf{1}_\tau}$, la d\'eriv\'ee de $w$ dans la 
direction du plongement $\tau$. Si $\kbf\in\NN^{\emb}$, et si on pose 
$\nabla^\kbf(w) = \prod_{\tau \in \emb} \nabla_\tau^{k_\tau} (w)$, alors $w_\kbf = \nabla^\kbf(w)/\kbf!$.

On dit que $w \in W^{\la}$ est {\it $F$-analytique}
 si l'on a $\nabla_\tau(w) = 0$ pour tout $\tau\neq \Id $. Ceci \'equivaut \`a
l'existence d'une suite $\{ w_k\}_{k \geq 0}$ telle que $g(w) = \sum_{k \geq 0} \ell(g)^k w_k$ pour $g$ assez proche de $1$. On note $W^{\fan}$ les vecteurs $F$-analytiques de $W$.

\subsection{Vecteurs localement $\Qp$-analytiques}
\label{ltcalc}

Le corps $\hat{F}_\infty^{\la}$ est un sous-corps de $\hat{F}_\infty$ qui contient $F_\infty$. Si $F \neq \Qp$, alors les \'el\'ements $u_\tau$ du th.~\ref{peremb} sont des exemples d'\'el\'ements de $\hat{F}_\infty^{\la}$ qui sont  localement $\Qp$-analytiques mais pas localement constants. Rappelons que le choix de $\log(p) \in \Qp$ d\'etermine un morphisme $\Gal(\Qpbar/\Qp)$-\'equivariant $\log : \Cp^\times \to \Cp$. Soit alors $x_\tau = \log (u_\tau)$, de telle sorte que $g(x_\tau) = x_\tau + \log \chi_F^\tau(g)$ si $g \in \Gamma_F$. Pour tout plongement $\tau \in \emb \setminus \{ \Id \}$ et $n \geq 1$, soit $x_{n,\tau}$ un \'el\'ement de $F_\infty$ tel que $\| x_\tau -x_{n,\tau} \| \leq p^{-n}$. Par le lem.~\ref{descnm}, il existe $r(n) \geq 1$ tel que  $x_{n,\tau} \in F_{r(n)}$ et tel que si $m \geq r(n)$, alors $x \in \hat{F}_\infty^{G_m\dan}$ et $\| x_\tau -x_{n,\tau} \|_{G_m} = \| x_\tau -x_{n,\tau} \|$.  On peut supposer que la suite $\{r(n)\}_{n \geq 1}$ est croissante. Soit $K_n = K \cdot F_n$. Si $m \geq r(n)$ et $\{a_\kbf\}_{\kbf \in \NN^{\emb \setminus \{ \Id \} }}$ est une suite d'\'el\'ements de $K_m$ telle que $p^{n|\kbf|} a_\kbf \to 0$ quand $|\kbf| \to +\infty$, alors $a_\kbf (\xbf-\xbf_n)^\kbf \to 0$ dans $\hat{K}_\infty^{G_m\dan}$ et donc la s\'erie $\sum_{\kbf \in \NN^{\emb \setminus \{ \Id \} }} a_\kbf (\xbf-\xbf_n)^\kbf$ converge vers un \'el\'ement de $\hat{K}_\infty^{G_m\dan}$. Notons $K_m \dacc{\xbf-\xbf_n}_n$ l'ensemble des sommes de ces s\'eries, de sorte que $K_m \dacc{\xbf-\xbf_n}_n \subset \hat{K}_\infty^{G_m\dan} \subset \hat{K}_\infty^{\la}$. On a aussi une inclusion $K_{r(n)} \dacc{\xbf-\xbf_n}_n \subset K_{r(n+1)} \dacc{\xbf-\xbf_{n+1}}_{n+1}$.

\begin{theo}\label{qpanfinf}
L'application $\cup_{n \geq 1} K_{r(n)} \dacc{\xbf-\xbf_n}_n \to \hat{K}_\infty^{\la}$ est un isomorphisme d'espaces LB.
\end{theo}

\begin{coro}\label{finfan}
Si $x \in \hat{K}_\infty^{\la}$, alors $\nabla_\Id(x)=0$, et donc $\hat{K}_\infty^{\fan} = K_\infty$.
\end{coro}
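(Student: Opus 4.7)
The plan is to use Theorem~\ref{qpanfinf}, which writes every $x \in \hat{K}_\infty^{\la}$ as a convergent series $\sum_\kbf a_\kbf (\xbf - \xbf_n)^\kbf$ in the Lubin--Tate periods $x_\tau$ (indexed by $\tau \in \emb \setminus \{\Id\}$), with coefficients $a_\kbf \in K_{r(n)}$ and approximations $x_{n,\tau} \in F_{r(n)}$. The operator $\nabla_{\Id}$ is a $\Qp$-linear derivation on each Banach algebra $\hat{K}_\infty^{G_m\dan}$ (continuity is Lemma~\ref{liecont}; the Leibniz rule follows from the fact that $\Gamma_K$ acts by ring automorphisms), so it suffices to compute $\nabla_{\Id}$ on the building blocks of the series and then to invoke continuity.

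Every coefficient $a_\kbf$ and every approximation $x_{n,\tau}$ lies in the finite extension $K_{r(n)}$ of $K$, hence is fixed by an open subgroup of $\Gamma_K$, so all its $\nabla_\tau$-derivatives vanish. For the period $x_\tau$ itself, the defining relation $g(x_\tau) = x_\tau + \tau(\ell(g))$ compared with the Taylor expansion $g(x_\tau) = \sum_\kbf \ell(g)^\kbf (x_\tau)_\kbf$, together with the $\Qp$-linear independence of the coordinates $\{\sigma(\ell(g))\}_{\sigma \in \emb}$ on $p^m \OO_F$, immediately yields $\nabla_\sigma(x_\tau) = \delta_{\sigma,\tau}$. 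Since $\tau \neq \Id$, this gives $\nabla_{\Id}(x_\tau) = 0$; combining everything through the Leibniz rule and continuity proves the first assertion $\nabla_{\Id}(x) = 0$ for every $x \in \hat{K}_\infty^{\la}$.

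For the second assertion, if $x \in \hat{K}_\infty^{\fan}$ then $\nabla_\tau(x) = 0$ for every $\tau \neq \Id$ by definition, and $\nabla_{\Id}(x) = 0$ by the first part. Since $\Lie(\Gamma_F)$ is abelian, each $\nabla^\kbf$ with $|\kbf| \geq 1$ can be written so that some $\nabla_\tau$ is applied to $x$ last, hence $\nabla^\kbf(x) = 0$. The analytic expansion $g(x) = \sum_\kbf \ell(g)^\kbf \nabla^\kbf(x)/\kbf!$ then collapses to $g(x) = x$ on an open subgroup of $\Gamma_K$, which by Ax--Sen--Tate forces $x$ to lie in a finite extension of $K$ contained in $K_\infty$ (a finite extension of $\Qp$ is already complete). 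The reverse inclusion is trivial since every element of $K_\infty$ is fixed by an open subgroup and therefore killed by all the $\nabla_\tau$.

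The whole corollary is essentially a formal consequence of Theorem~\ref{qpanfinf} combined with the derivation property and continuity of the Lie algebra action; the only genuinely computational step is the identification $\nabla_\sigma(x_\tau) = \delta_{\sigma,\tau}$, which is a direct reading of a Taylor coefficient and does not present a real obstacle.
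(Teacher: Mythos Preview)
Your argument is correct and is exactly the unwinding the paper has in mind: the corollary is stated immediately after Theorem~\ref{qpanfinf} with no proof, because once every $x\in\hat{K}_\infty^{\la}$ is written as a convergent series in the $(x_\tau-x_{n,\tau})$ with coefficients in $K_{r(n)}$, the vanishing of $\nabla_{\Id}$ on each ingredient together with continuity (Lemma~\ref{liecont}) and the Leibniz rule gives the first claim, and the second then follows from Ax--Sen--Tate as you indicate. Your identification $\nabla_\sigma(x_\tau)=\delta_{\sigma,\tau}$ is precisely the computation underlying the formula $\nabla_\tau(\xbf-\xbf_n)^\kbf$ used later in the proof of Theorem~\ref{qpanfinf}, so nothing here goes beyond what the paper already uses.
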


\begin{rema}\label{locancorps}
{\rm (i)}
  La description de $\hat{K}_\infty^{\la}$ fournie par le th.~\ref{qpanfinf} est analogue \`a ce qui se passe pour l'anneau local d'un point de Berkovich de type IV.

{\rm (ii)}
En utilisant une version forte du th\'eor\`eme d'Ax-Sen-Tate (si $L$ est un sous-corps de $\Qpbar$,
et si $x\in\Cp$ v\'erifie $v_p(g(x)-x)\geq N$, pour tout $g\in G_L$, alors il existe
$a\in L$ tel que $v_p(x-a)\geq k-1$), on peut montrer que
$r(n)=n+1$.
\end{rema}

Remarquons pour commencer que le corps $K_\infty$ est un $F_\infty$-espace vectoriel de dimension finie $r$, et il existe donc $n \gg 0$ et des \'el\'ements $k_1,\hdots,k_r$ de $K_n$, tels que $K_\infty = \oplus_{i=1}^r F_\infty \cdot k_i$. On a alors $\hat{K}_\infty = \oplus_{i=1}^r \hat{F}_\infty \cdot k_i$ et donc $\hat{K}^{\la}_\infty = \oplus_{i=1}^r \hat{F}^{\la}_\infty \cdot k_i$ par la prop.~\ref{basan}. Afin de d\'ecrire $\hat{K}^{\la}_\infty$, il est donc suffisant de d\'eterminer $\hat{F}^{\la}_\infty$. Nous montrons donc le th.~\ref{qpanfinf} pour $K=F$. Avant cela, \'etablissons quelques r\'esultats pr\'eliminaires. 

Soit $W$ une repr\'esentation de $\Gamma_F$. Si $n \geq1$, soit $W \dacc{T}_n$ l'espace vectoriel des s\'eries formelles $\sum_{k \geq 0} a_k T^k$ avec $a_k \in W$, et $p^{nk} a_k \to 0$ quand $k \to +\infty$. On munit $W \dacc{T}_n$ d'une action de $G_n$ par la formule $g(T) = T + \ell(g)$, et en faisant agir $G_n$ sur les coefficients. Comme $\ell(g) \in p^n \OO_F$ si $g \in G_n$, cette action est bien d\'efinie. 

Si $w \in W$ est un vecteur $F$-analytique sur $G_n$ avec $n \geq 1$, alors il existe une suite $\{w_k\}_{k \geq 0}$ de $W$, avec $p^{nk}w_k \to 0$ quand $k \to +\infty$, telle que $g(w) = \sum_{k \geq 0} \ell(g)^k w_k$ si $g \in G_n$. Soit $C(w)$ l'\'el\'ement de $W \dacc{T}_n$ donn\'e par $C(w) = \sum_{k \geq 0} (-1)^k w_k T^k$.

\begin{lemm}\label{fanbsenf}
Si $w \in W$ est $F$-analytique sur $G_n$, alors $C(w) \in W \dacc{T}_n^{G_n}$.
\end{lemm}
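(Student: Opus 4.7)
The plan is to verify directly that $g(C(w)) = C(w)$ for every $g \in G_n$, using the $F$-analyticity of $w$ on $G_n$ to derive a transformation formula for the coefficients $w_k$, and then a binomial telescoping.

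First I would exploit the fact that $G_n$ is abelian and $\ell : G_n \to p^n \OO_F$ is a continuous group homomorphism to derive how $g \in G_n$ acts on each individual $w_k$. For $g, h \in G_n$, one has $(gh)(w) = \sum_k \ell(gh)^k w_k = \sum_k (\ell(g)+\ell(h))^k w_k$, while $g(h(w)) = \sum_k \ell(h)^k g(w_k)$ because the $\ell(h) \in F$ are scalars for the $F$-linear $G_n$-action. Expanding the first expression with the binomial theorem and identifying coefficients of $\ell(h)^j$ gives
\[ g(w_j) = \sum_{i \geq 0} \binom{i+j}{i} \ell(g)^i\, w_{i+j}, \]
a formula whose right-hand side converges in $W$ because $p^{nk} w_k \to 0$ and $\ell(g) \in p^n \OO_F$.

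Next I would substitute this into the action on $C(w)$. By the definition of the $G_n$-action on $W \dacc{T}_n$,
\[ g(C(w)) = \sum_{k \geq 0} (-1)^k\, g(w_k)\, (T + \ell(g))^k = \sum_{k \geq 0} (-1)^k (T+\ell(g))^k \sum_{i \geq 0}\binom{i+k}{i}\ell(g)^i w_{i+k}. \]
Setting $m = i+k$ and regrouping gives
\[ g(C(w)) = \sum_{m \geq 0} w_m \sum_{k=0}^m \binom{m}{k}(-1)^k \ell(g)^{m-k}(T+\ell(g))^k = \sum_{m \geq 0} w_m \bigl(\ell(g) - (T+\ell(g))\bigr)^m = \sum_{m \geq 0} (-1)^m w_m T^m, \]
where the binomial theorem makes the inner sum collapse. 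The right-hand side is exactly $C(w)$.

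The only thing left is to justify that all the reorderings take place in $W \dacc{T}_n$, i.e.\ that the double series converges absolutely with respect to the norm defining $W\dacc{T}_n$. This is immediate from the bounds $|\ell(g)| \leq p^{-n}$, $|T| \leq p^{-n}$ and $|p^{nk} w_k| \to 0$, so no genuine obstacle arises; the lemma follows.
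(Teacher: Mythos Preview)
Your proof is correct and follows the same strategy as the paper: derive the transformation law $g(w_j)=\sum_{i\geq 0}\binom{i+j}{i}\ell(g)^i w_{i+j}$, substitute into $g(C(w))$, and collapse via a binomial identity. The paper obtains the transformation law by writing $w_i=\nabla^i(w)/i!$, whereas you extract it from the cocycle relation $(gh)(w)=g(h(w))$; these are equivalent. Your final simplification, recognizing $\sum_{k=0}^m\binom{m}{k}(-1)^k\ell(g)^{m-k}(T+\ell(g))^k=(-T)^m$ directly, is a touch cleaner than the paper's route of first expanding $(T+\ell(g))^k$ and then collapsing a triple sum via $(1-1)^{m-i}=0$, but the content is identical.
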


\begin{proof}
Si $g \in G_n$, alors $g(C(w))=\sum_{k \geq 0} (-1)^k g(w_k) (T+\ell(g))^k$. Comme $w$ est $F$-analytique sur $G_n$, $w_k$ l'est aussi et on a $g(w_k) = \sum_{j \geq 0} \ell(g)^j w_{j+k} \binom{k+j}{j}$ avec $w_i = \nabla^i(w)/i!$ pour $i \geq 0$. On a  donc 
\begin{align*} 
g(C(w)) & =\sum_{k \geq 0} (-1)^k g(w_k) (T+\ell(g))^k \\
& = \sum_{k \geq 0} (-1)^k \sum_{j \geq 0} \ell(g)^j w_{j+k} \binom{k+j}{j} \sum_{i+\ell = k} T^i \ell(g)^{\ell} \binom{k}{i} \\
& = \sum_{i \geq 0} (-1)^i T^i  \sum_{m \geq i} \ell(g)^{m-i} w_m \binom{m}{i} \sum_{j+\ell = m - i} (-1)^\ell  \binom{m-i}{\ell} 
\end{align*}
Comme $\sum_{j+\ell = m - i} (-1)^\ell  \binom{m-i}{\ell}  = (1-1)^{m-i} = 0$ sauf si $m=i$, on a finalement $g(C(w)) = \sum_{i \geq 0} (-1)^i T^i  w_i  = C(w)$.
\end{proof}

\begin{lemm}\label{approz}
Si $n \geq 1$ et $m \geq r(n)$, alors il existe $x_n \in p^n \OO_{\hat{F}_\infty}$ tel que $g(x_n)=x_n+ \sum_{\tau \neq \Id} \log \chi_F^{\tau} (g)$ si $g \in G_{n+m}$. 
\end{lemm}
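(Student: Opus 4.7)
The plan is to set
\[ x_n := \sum_{\tau \neq \Id}\bigl(x_\tau - x_{n,\tau}\bigr), \]
and check the two required properties by direct computation.

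\textbf{Size estimate.} By construction, each $x_{n,\tau}$ was chosen so that $\|x_\tau - x_{n,\tau}\|\leq p^{-n}$, i.e.\ $x_\tau - x_{n,\tau}\in p^n\OO_{\hat{F}_\infty}$. Since $p^n\OO_{\hat{F}_\infty}$ is closed under finite sums (ultrametric inequality), $x_n\in p^n\OO_{\hat{F}_\infty}$.

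\textbf{Galois calculation.} Assume $m\geq r(n)$ and take $g\in G_{n+m}$. Since $n+m\geq m\geq r(n)$, we have $G_{n+m}\subseteq G_{r(n)}=\Gal(F_\infty/F_{r(n)})$; and by the choice of $r(n)$ each $x_{n,\tau}$ lies in $F_{r(n)}$, hence is fixed by $g$. Combined with the fundamental transformation law $g(x_\tau) = x_\tau + \log \chi_F^\tau(g)$ from Theorem \ref{peremb} (applied after taking logarithms of the $u_\tau$), we get
\[ g(x_n) = \sum_{\tau\neq\Id}\bigl(g(x_\tau) - g(x_{n,\tau})\bigr) = \sum_{\tau\neq\Id}\bigl(x_\tau + \log\chi_F^\tau(g) - x_{n,\tau}\bigr) = x_n + \sum_{\tau\neq\Id}\log\chi_F^\tau(g), \]
which is the identity required.

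The statement is therefore essentially tautological once one has constructed the $x_\tau$ and their approximations $x_{n,\tau}$; there is no real obstacle. The only point one has to be attentive to is matching the indices: the gap $n$ between $r(n)$ and $n+m$ plays no role in proving the displayed cocycle identity (any $g\in G_{r(n)}$ already fixes the $x_{n,\tau}$), but the slack will presumably be used afterwards in order to control $\|x_n\|_{G_{n+m}}$ and to have the series in the $x_\tau-x_{n,\tau}$ converge inside $\hat{F}_\infty^{G_{n+m}\dan}$, as needed in the proof of Theorem \ref{qpanfinf}.
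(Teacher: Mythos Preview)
Your proof is correct and follows the same idea as the paper's: set $x_n$ equal to $\sum_{\tau\neq\Id} x_\tau$ minus a finite-level approximation, check the size using $\|x_\tau-x_{n,\tau}\|\leq p^{-n}$, and check the Galois cocycle identity using the transformation law for $x_\tau$ together with the fact that the approximations lie in $F_{r(n)}$.

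The only difference is in the choice of approximation: the paper subtracts $\sum_{\tau\neq\Id} x_{n+m,\tau}$ rather than $\sum_{\tau\neq\Id} x_{n,\tau}$. Your choice is in fact the cleaner one: with $x_{n,\tau}\in F_{r(n)}$ and $n+m\geq m\geq r(n)$, the fixedness by $G_{n+m}$ is immediate, whereas with $x_{n+m,\tau}\in F_{r(n+m)}$ one would need $n+m\geq r(n+m)$, which is not part of the hypotheses. Your closing observation that the cocycle identity already holds for all $g\in G_{r(n)}$ is also correct.
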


\begin{proof}
Si $x = \sum_{\tau \neq \Id} x_\tau$, alors $g(x)=x + \sum_{\tau \neq \Id} \log \chi_F^{\tau} (g)$ pour  $g \in G_n$ avec $n \geq 1$. Il suffit alors de prendre $x_n = x - \sum_{\tau \neq \Id} x_{n+m,\tau}$ pour $m \geq r(n)$.
\end{proof}

Soit $\chi : \Gamma_F \to \Zp^\times$ le caract\`ere $g \mapsto \chi(g) = \prod_{\tau \in \emb} \chi_F^\tau(g) = \Nm_{F/\Qp} (\chi_F(g))$. Soit $\hat{F}_\infty \dacc{U}_m$ l'anneau des s\'eries formelles $\sum_{k \geq 0} a_k U^k$ avec $a_k \in \hat{F}_\infty$, et $p^{mk} a_k \to 0$ quand $k \to +\infty$. On d\'efinit une action de $G_m$ sur $\hat{F}_\infty \dacc{U}_m$ par $g(U) = U + \log \chi(g)$.

\begin{prop}\label{bseninv}
Si $m \geq 1$, alors $\hat{F}_\infty \dacc{U}_m^{G_m} = F_m$.
\end{prop}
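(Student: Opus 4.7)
The plan is to adapt the argument of Theorem \ref{pcsenth} to the present setting, after first reducing to a one-dimensional sub-extension on which Tate's normalised traces are available. Write $f(U) = \sum_{k \geq 0} a_k U^k$ for an arbitrary element of $\hat F_\infty \dacc{U}_m^{G_m}$. The identity $g(f) = f$ for $g \in G_m$, combined with $g(U) = U + \log \chi(g)$ and the binomial expansion of $(U + \log\chi(g))^k$, yields on comparing coefficients of $U^i$ the relation
$$g(a_i) = \sum_{j \geq 0} a_{i+j} \binom{i+j}{i} (-\log\chi(g))^j \qquad (\star)$$
for every $i \geq 0$ and every $g \in G_m$.

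Specialising $(\star)$ to $g \in H := G_m \cap \ker \chi$ kills every term with $j \geq 1$ and gives $g(a_i) = a_i$, so each $a_i$ lies in $\hat F_\infty^H = \hat L_m$ with $L_m := F_\infty^H$, by Ax--Sen--Tate. The character $\chi$ induces an injection $G_m/H \hookrightarrow 1 + p^m \Zp$ with open image, and since $\chi = \Nm_{F/\Qp} \circ \chi_F$ agrees with the restriction of the cyclotomic character to $\Gamma_F$ (local class field theory), the extension $L_m/F_m$ is a totally ramified $\Zp$-extension of the local field $F_m$. Consequently, Tate's normalised traces $R_{m'} : \hat L_m \to L_m^{G_{m'}}$ of \cite{T67} exist for all $m'$ sufficiently large; they commute with the action of $G_m$ and satisfy $R_{m'}(x) \to x$ as $m' \to +\infty$.

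Applying $R_{m'}$ to $(\star)$ yields
$$g(R_{m'}(a_i)) = \sum_{j \geq 0} R_{m'}(a_{i+j}) \binom{i+j}{i} (-\log\chi(g))^j.$$
For $g$ ranging over $G_{m'}$ the left-hand side is constant, equal to $R_{m'}(a_i)$, because $R_{m'}(a_i) \in L_m^{G_{m'}}$ is fixed by $G_{m'}$. The right-hand side, viewed as a function of $g$, is a convergent power series in $t = \log\chi(g)$ whose range $\log\chi(G_{m'})$ is an infinite open subgroup of $p^{m'}\Zp$; by analyticity this power series must therefore be constant in $t$, forcing $R_{m'}(a_{i+j}) = 0$ for every $j \geq 1$. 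Taking $i = 0$ and letting $m' \to +\infty$ gives $a_k = 0$ for all $k \geq 1$; then $f = a_0 \in \hat F_\infty^{G_m} = F_m$ by Ax--Sen--Tate.

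The main obstacle I anticipate is the second step, namely isolating the correct $\Zp$-sub-extension $L_m/F_m$ and verifying that Tate's normalisation formalism really applies to it; this rests on the classical identification of $\Nm_{F/\Qp} \circ \chi_F$ with the cyclotomic character on the inertia subgroup $\Gamma_F$. Once this reduction is secured the remaining analyticity argument is essentially a verbatim transcription of the proof of Theorem \ref{pcsenth}.
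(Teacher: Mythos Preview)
Your proof is correct and follows essentially the same route as the paper: both rest on the Tate-trace-plus-analyticity argument underlying Theorem~\ref{pcsenth}. The only difference is packaging---the paper simply observes the evident $\Gal(\Qpbar/F_m)$-equivariant embedding $\hat{F}_\infty\dacc{U}_m \hookrightarrow \bsen^m$ and invokes Theorem~\ref{pcsenth} directly, whereas you unroll that argument inside $\hat{F}_\infty$ by first isolating the $\Zp$-sub-extension $L_m/F_m$ (your aside identifying $\chi$ with $\chi_{\cyc}$ on $\Gamma_F$, while true, is not strictly needed: all you use is that $L_m/F_m$ is a totally ramified $\Zp$-extension, which already follows from $\chi(G_m)$ being open in $1+p^m\Zp$ and $F_\infty/F$ being totally ramified).
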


\begin{proof}
Soit $\bsen^m$ l'anneau dont la construction est rappel\'ee au \S\,\ref{bsensec}. 
L'anneau $\hat{F}_\infty \dacc{U}_m$ admet de mani\`ere \'evidente une injection  $\Gal(\Qpbar/F_m)$-\'equivariante dans $\bsen^m$ et la proposition suit du th.~\ref{pcsenth}.
\end{proof}

\begin{coro}\label{finfanq}
L'ensemble des vecteurs $F$-analytiques de $\hat{F}_\infty^{G_n\dan}$ est $F_n$.
\end{coro}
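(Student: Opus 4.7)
The plan is to deduce Corollary~\ref{finfanq} from Proposition~\ref{bseninv} by a change of variables that converts the action $g(T) = T + \ell(g)$ of Lemma~\ref{fanbsenf} into the action $g(U) = U + \log\chi(g)$ of Proposition~\ref{bseninv}, using the period-like element supplied by Lemma~\ref{approz}.

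The inclusion $F_n \subset (\hat{F}_\infty^{G_n\dan})^{\fan}$ being trivial, I would start with an $F$-analytic vector $w \in \hat{F}_\infty^{G_n\dan}$, writing $g(w) = \sum_{k \geq 0} \ell(g)^k w_k$ for $g \in G_n$. By Lemma~\ref{fanbsenf}, the associated series $C(w) = \sum_k (-1)^k w_k T^k$ lies in $\hat{F}_\infty \dacc{T}_n^{G_n}$, i.e.\ is fixed for the action $g(T) = T + \ell(g)$.

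Next, choosing $N \geq n$ large enough for Lemma~\ref{approz} to supply $x_n \in p^n\OO_{\hat{F}_\infty}$ with $g(x_n) = x_n + \sum_{\tau \neq \Id} \log\chi_F^\tau(g)$ for every $g \in G_N$, I would substitute $U = T + x_n$. Since $|x_n| \leq p^{-n}$, a routine estimate shows that $C(w) = \sum_k (-1)^k w_k (U - x_n)^k$, re-expanded as a series in $U$, still belongs to $\hat{F}_\infty \dacc{U}_N$; and for $g \in G_N$ one computes
\[ g(U) = g(T) + g(x_n) = T + \ell(g) + x_n + \sum_{\tau \neq \Id} \log\chi_F^\tau(g) = U + \log\chi(g), \]
because $\log\chi(g) = \ell(g) + \sum_{\tau \neq \Id}\log\chi_F^\tau(g)$. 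Thus $C(w)$ becomes a $G_N$-invariant element of $\hat{F}_\infty \dacc{U}_N$ for exactly the action of Proposition~\ref{bseninv}, so it lies in $F_N$. Being a constant series in $U$, it is also constant in $T$, which forces $w_k = 0$ for $k \geq 1$ and $w = w_0 \in F_N$.

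It remains to sharpen $F_N$ to $F_n$ using the $G_n$-analyticity of $w$: the orbit map $g \mapsto g(w)$ on $G_n$ takes values in $F_N$ and is locally constant on the cosets of $G_N$ in $G_n$, while being globally analytic by hypothesis. All its Lie-algebra derivatives therefore vanish, so the Taylor expansion on $G_n$ is constant and $w \in F_n$. The main delicate point will be the bookkeeping in the change of variables $T = U - x_n$, checking that convergence radii are preserved and that the two actions intertwine over $G_N$; the final descent from $F_N$ to $F_n$ via the Lie-algebra derivative argument is, by contrast, essentially formal.
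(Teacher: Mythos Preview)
Your proof is correct and follows essentially the same route as the paper: construct $C(w)$ via Lemma~\ref{fanbsenf}, use the period element of Lemma~\ref{approz} to perform the change of variables $T \leftrightarrow U - x_n$, apply Proposition~\ref{bseninv} to land in $F_N$ (the paper's $F_{n+m}$), and then descend to $F_n$ by the ``analytic plus locally constant implies constant'' argument. The only difference is cosmetic---you are somewhat more explicit about the intermediate conclusion $w_k=0$ for $k\geq 1$ and about the bookkeeping in the substitution, but the logic is identical.
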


\begin{proof}
Soit $x \in \hat{F}_\infty$ un vecteur $F$-analytique sur $G_n$. Son image $C(x)$ dans $\hat{F}_\infty \dacc{T}_n$ est fix\'ee par $G_n$ par le lem.\ref{fanbsenf} appliqu\'e \`a $W=\hat{F}_\infty$. Soit $x_n$ comme dans le lem.\ref{approz}. L'application $\hat{F}_\infty \dacc{T}_n \to \hat{F}_\infty \dacc{U}_n$ donn\'ee par $T \mapsto U - x_n$ est bien d\'efinie comme $x_n \in p^n \OO_{\hat{F}_\infty}$, et c'est une bijection $G_{n+m}$-\'equivariante. En appliquant la prop.~\ref{bseninv} \`a l'image de $C(x)$ dans $\hat{F}_\infty \dacc{U}_{m+n}$, on trouve que $x \in F_{n+m}$. Un vecteur analytique sur $G_n$ et constant sur $G_{n+m}$ est constant sur $G_n$, et donc $x \in F_n$.
\end{proof}

\begin{proof}[D\'emonstration du th.~\ref{qpanfinf}]
Soit $z$ un \'el\'ement de $\hat{F}_\infty^{G_\ell\dan}$ avec $\ell \geq 1$, et $z_\kbf=\nabla^\kbf(z)/\kbf!$ si $\kbf \in \NN^{\emb}$. Par les lem.~\ref{liecont} et \ref{descnm}, on a $z_\kbf \in \hat{F}_\infty^{G_\ell\dan}$ pour tout $\kbf \in \NN^{\emb}$ et il existe une constante $n$ telle que $\| z_\kbf \|_{G_m} \leq p^{(n-1) |\kbf|} \| z \|_{G_\ell}$ quel que soit $m \geq \ell$. Si $m \geq \max(r(n),\ell)$ et $\ibf \in \NN^{\emb \setminus \{ \Id \}}$, alors la s\'erie
\[ y_\ibf = \sum_{\kbf \in \NN^{\emb \setminus \{ \Id \} }} (-1)^{|\kbf|} (\xbf-\xbf_n)^\kbf z_{\kbf+\ibf} \binom{\kbf+\ibf}{\kbf} \]
converge dans $\hat{F}_\infty^{G_m\dan}$. Comme $g(x_\tau-x_{n,\tau})=(x_\tau-x_{n,\tau})+\tau \circ \ell(g)$ pour $g \in G_m$, on a 
\[ \nabla_\tau (\xbf-\xbf_n)^\kbf  = \begin{cases} k_\tau (\xbf-\xbf_n)^{\kbf-\mathbf{1}_\tau} & \text{si $k_\tau \geq 1$,} \\
0 & \text{si $k_\tau=0$}. \end{cases} \]
Cette formule, et le fait que
\[ y_\ibf = \sum_{\kbf \in \NN^{\emb \setminus \{ \Id \} }} (-1)^{|\kbf|} (\xbf-\xbf_n)^\kbf z_{\kbf+\ibf} \binom{\kbf+\ibf}{\kbf} = \frac{1}{\ibf!} \sum_{\kbf \in \NN^{\emb \setminus \{ \Id \} }} (-1)^{|\kbf|} (\xbf-\xbf_n)^\kbf 
\frac{\nabla^\kbf(\nabla^\ibf(z))}{\kbf!}, \]
impliquent que $\nabla_\tau(y_\ibf)=0$ pour tout $\tau \neq \Id$ et $\ibf \in \NN^{\emb \setminus \{ \Id \} }$. Les \'el\'ements $y_\ibf$ sont donc des vecteurs localement $F$-analytiques de $\hat{F}_\infty^{G_m\dan}$. Ils appartiennent \`a $F_m$ par le cor.~\ref{finfanq}. 

La formule d\'efinissant $y_\ibf$ et le fait que $\|\xbf-\xbf_n\|_{G_m} \leq p^{-n}$ impliquent que l'on a $\| y_\ibf \|_{G_m} \leq p^{(n-1)|\ibf|} \|z\|_{G_\ell}$, et donc que la s\'erie $\sum_{\ibf \in \NN^{\emb \setminus \{ \Id \} }} y_\ibf (\xbf-\xbf_n)^\ibf$ converge  dans $\hat{F}_\infty^{G_m\dan}$. On a alors
\begin{align*}
\sum_{\ibf \in \NN^{\emb \setminus \{ \Id \} }} y_\ibf (\xbf-\xbf_n)^\ibf  & = \sum_{\ibf \in \NN^{\emb \setminus \{ \Id \} }}  \sum_{\kbf \in \NN^{\emb \setminus \{ \Id \} }} (-1)^{|\kbf|} (\xbf-\xbf_n)^\kbf z_{\kbf+\ibf} \binom{\kbf+\ibf}{\kbf} (\xbf-\xbf_n)^\ibf  \\
& = \sum_{\jbf \in \NN^{\emb \setminus \{ \Id \} }} z_\jbf (\xbf-\xbf_n)^\jbf \sum_{\kbf+\ibf=\jbf}  (-1)^{|\kbf|} \binom{\jbf}{\kbf}  \\
& = z_0.
\end{align*}
On en d\'eduit que $z = \sum_{\ibf \in \NN^{\emb \setminus \{ \Id \} }} y_\ibf (\xbf-\xbf_n)^\ibf$, et donc que $z \in F_m \dacc{\xbf-\xbf_n}_n$. 

L'application $\cup_{n \geq 1} F_{r(n)} \dacc{\xbf-\xbf_n}_n \to \hat{F}_\infty^{\la}$ est une bijection continue entre espaces LB, et donc un isomorphisme d'espaces LB par le th\'eor\`eme de l'image ouverte.
\end{proof}

\subsection{G\'en\'eralisation \`a $\bdr$}
\label{bdran}

Dans ce {\S}, nous montrons une g\'en\'eralisation du th.~\ref{qpanfinf}, o\`u $\hat{K}_\infty$ est remplac\'e par $(\bdr^+/t^k)^{H_K}$ avec $k \geq 1$.

\begin{lemm}\label{cvthbdr}
Soit $E$ une extension finie de $\Qp$ et $f(T) = \sum_{k \geq 0} a_k T^k \in E\dcroc{T}$. Si $x \in \bdr^+/t^k$ avec $k \geq 1$, alors la s\'erie $f(x)$ converge dans $\bdr^+ / t^k$ si et seulement si la s\'erie  $f(\theta(x))$ converge dans $\Cp$.
\end{lemm}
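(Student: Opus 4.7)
Le plan suit la dichotomie naturelle sugg\'er\'ee par l'\'enonc\'e, en exploitant la structure de $\bdr^+/t^k$ comme extension it\'er\'ee de $\Cp$ par l'id\'eal nilpotent $(t)$.

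Le sens direct est imm\'ediat: la projection $\theta:\bdr^+/t^k\to\Cp$, r\'eduction modulo l'id\'eal nilpotent $(t)$, est un morphisme d'anneaux continu, donc si la s\'erie $f(x)=\sum_n a_n x^n$ converge dans $\bdr^+/t^k$, son image par $\theta$ converge vers $f(\theta(x))$ dans $\Cp$.

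Pour la r\'eciproque, je commencerais par traiter \`a part le cas $\theta(x)=0$: on a alors $x\in (t)$ et $x^k=0$ dans $\bdr^+/t^k$, de sorte que $f(x)$ est en fait une somme finie qui converge trivialement. Supposons donc $\xi:=\theta(x)\neq 0$. L'id\'ee est de choisir un rel\`evement $\tilde{\xi}\in\bdr^+/t^k$ de $\xi$ par une scission $\Qp$-lin\'eaire continue de $\theta$, et une norme de Banach sur $\bdr^+/t^k$ pour laquelle $\|\tilde{\xi}\|=|\xi|$; l'existence d'une telle donn\'ee repose sur le fait que $\bdr^+/t^k$ est, comme $\Qp$-espace de Banach, une extension it\'er\'ee scind\'ee topologiquement de copies de $\Cp$. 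Posant $y:=x-\tilde{\xi}\in(t)$, on a $y^k=0$, et la formule du bin\^ome (finie par nilpotence de $y$) donne
\[ x^n=(\tilde{\xi}+y)^n=\sum_{j=0}^{k-1}\binom{n}{j}\tilde{\xi}^{n-j}y^j\quad\text{pour $n\geq k-1$.} \]

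En posant $r:=\|y\|/|\xi|$, la sous-multiplicativit\'e de la norme jointe \`a la majoration classique $|\binom{n}{j}|_p\leq 1$ fournit
\[ \|a_n x^n\|\leq |a_n|\cdot\max_{0\leq j\leq k-1}|\xi|^{n-j}\|y\|^j=|a_n|\cdot|\xi|^n\cdot\max(1,r^{k-1}), \]
o\`u le facteur $\max(1,r^{k-1})$ est une constante ind\'ependante de $n$. L'hypoth\`ese $a_n\xi^n\to 0$ dans $\Cp$ entra\^\i ne alors $a_n x^n\to 0$ dans $\bdr^+/t^k$, ce qui conclut. Le seul point d\'elicat du plan sera la construction effective de la scission isom\'etrique de $\theta$ (c'est-\`a-dire d'une norme rendant $\tilde{\xi}$ de norme exactement $|\xi|$): c'est l\`a que passe tout le contenu topologique du lemme, les autres \'etapes \'etant purement alg\'ebriques (nilpotence de $t$ et formule du bin\^ome).
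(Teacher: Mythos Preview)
Ta stratégie --- relever $\theta(x)$, écrire $x$ comme somme du relevé et d'un nilpotent, puis développer par le binôme --- est exactement celle de l'article. La différence porte uniquement sur le point que tu signales toi-même comme délicat, et ta justification proposée pour ce point ne suffit pas.

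Tu affirmes que l'existence d'une norme sous-multiplicative et d'une section $\Qp$-linéaire continue $s$ avec $\|s(\xi)\|=|\xi|$ résulte du fait que $\bdr^+/t^k$ est une extension itérée topologiquement scindée de copies de $\Cp$. Ce scindage fournit bien un isomorphisme $\Qp$-linéaire topologique $\bdr^+/t^k\cong\Cp^k$, et la norme sup transportée rend la section isométrique; mais rien ne garantit que cette norme soit sous-multiplicative, car la section n'est qu'additive et la multiplication de $\bdr^+/t^k$ se transporte en une forme bilinéaire continue quelconque sur $\Cp^k$. On obtient seulement $\|ab\|\leq C\|a\|\,\|b\|$ pour une constante $C$; renormaliser par $C$ rend la norme sous-multiplicative mais détruit l'isométrie de la section. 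Or ton estimation finale exige bien $\|\tilde\xi\|\leq|\xi|$ et non $\|\tilde\xi\|\leq C|\xi|$: avec $C>1$, l'hypothèse $|a_n|\,|\xi|^n\to 0$ n'entraîne pas $|a_n|\,C^n|\xi|^n\to 0$.

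L'article contourne cette difficulté de façon concrète. Il prend pour boule unité l'image de $\atplus$ dans $\bdr^+/t^k$: c'est un sous-anneau, donc la norme associée est automatiquement sous-multiplicative. Il agrandit ensuite $E$ pour qu'il contienne un élément $e$ de valuation $v_p(\theta(x))$; en remplaçant $x$ par $x/e$, on se ramène à $\theta(x)\in\OO_{\Cp}$, et il suffit alors de montrer que $\{x^n\}$ est bornée. Comme $\theta:\atplus\to\OO_{\Cp}$ est surjective, on relève $\theta(x)$ en $x_0\in\atplus$, et les puissances $x_0^m$ restent dans $\atplus$, donc dans la boule unité. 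Le reste de l'argument est ton calcul binomial. Autrement dit, ton plan est correct, mais le \og contenu topologique\fg\ que tu évoques passe par la structure d'anneau explicite de $\atplus$ et la normalisation via l'agrandissement de $E$, pas par un scindage $\Qp$-linéaire abstrait.
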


\begin{proof}
Rappelons que $\bdr^+/t^k$ est un espace de Banach, la boule unit\'e \'etant l'image de $\atplus \to \bdr^+ / t^k$. On peut agrandir $E$ de telle sorte qu'il contienne un \'el\'ement de valuation $\vp(\theta(x))$ et il suffit alors de montrer que si $\theta(x) \in \OO_{\Cp}$, alors la suite $\{x^n\}_{n \geq 0}$ est born\'ee dans $\bdr^+/t^k$. Soit $x_0$ un \'el\'ement de $\atplus$ tel que $\theta(x) = \theta(x_0)$. On peut \'ecrire $x=x_0+([\tilde{p}]-p) y + t^k z$ o\`u $y \in \btplus$ et $z \in \bdr^+$. On a alors
\[ x^n = x_0^n + \binom{n}{1} x_0^{n-1} ([\tilde{p}]-p) y + \cdots + \binom{n}{k-1} x_0^{n-(k-1)} (([\tilde{p}]-p) y)^{k-1} + t^k z_k, \]
avec $z_k \in \bdr^+$, et donc $x^n \in (\atplus + y \atplus + \cdots + y^{k-1} \atplus) + t^k \bdr^+$ pour tout $n \geq 0$.
\end{proof}

Rappelons que $\Qpbar \subset \bdr^+$. Si $y \in (\bdr^+)^\times$, il existe $y_0 \in \Qpbar$ tel que $\theta(y/y_0) \in 1+\MM_{\Cp}$. On d\'efinit  une application $\log : (\bdr^+)^\times \to \bdr^+$ par $\log(y) = \log(y_0) + \log(y/y_0)$, o\`u $\log(y/y_0) = \sum_{n \geq 1} (-1)^{n-1} (y/y_0-1)^n/n$ converge dans $\bdr^+$ par le lem.~\ref{cvthbdr}. Cette application ne d\'epend pas du choix de $y_0$, et est $\Gal(\Qpbar/\Qp)$-\'equivariante. Si $\tau \neq \Id$, un analogue du th.~\ref{peremb} nous donne des \'el\'ements $v_\tau \in (\bdr^+)^\times$ tels que $g(v_\tau) = \chi_F^\tau(g)  \cdot v_\tau$, et les \'el\'ements $\log(v_\tau) \in \bdr^+$ satisfont $g(\log(v_\tau)) = \log(v_\tau) + \tau \circ \ell(g)$. \'Ecrivons $x'_\tau = \log(v_\tau)$ de telle sorte que $\theta(x'_\tau)=x_\tau$. Soit $K_{r(n)} \dacc{\Tbf}_n$ l'espace des s\'eries formelles $\sum_{\kbf \in \NN^{\emb \setminus \{ \Id \} }} a_\kbf \Tbf^\kbf$ o\`u $a_\kbf \in K_{r(n)}$ et $a_\kbf p^{n|\kbf|} \to 0$ quand $|\kbf| \to + \infty$. 

\begin{lemm}\label{sumconv}
Si $f(T) \in K_{r(n)} \dacc{\Tbf}_n$, alors $f(\xbf'-\xbf_n)$ converge dans $(\bdr^+/t^k)^{H_K}$.
\end{lemm}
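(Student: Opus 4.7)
Le plan est de ramener la convergence dans $\bdr^+/t^k$ \`a celle, d\'ej\`a connue dans $\Cp$, de $f(\theta(\xbf'-\xbf_n)) = f(\xbf-\xbf_n)$, via une g\'en\'eralisation multi-variables du lem.~\ref{cvthbdr}, puis de v\'erifier l'invariance par $H_K$ en invoquant la fermeture du sous-espace des invariants.

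Je commencerai par poser $y_\tau = x'_\tau - x_{n,\tau}$ et $\tilde y_\tau = p^n y_\tau$ pour $\tau \neq \Id$, de sorte que $\theta(\tilde y_\tau) \in \OO_{\Cp}$. En suivant la preuve du lem.~\ref{cvthbdr}, j'\'ecrirai $\tilde y_\tau = \alpha_\tau + ([\tilde p] - p) \beta_\tau + t^k \gamma_\tau$ avec $\alpha_\tau \in \atplus$, $\beta_\tau \in \btplus$, $\gamma_\tau \in \bdr^+$. En d\'eveloppant le produit $\prod_\tau \tilde y_\tau^{k_\tau}$ modulo $t^k$, et en utilisant que $([\tilde p]-p)^k \in t^k \bdr^+$, seuls les termes dont la puissance totale de $([\tilde p]-p)$ est $<k$ survivent; ils sont en nombre born\'e ind\'ependamment de $\kbf$ et vivent chacun dans une partie born\'ee fixe de $\bdr^+/t^k$. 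Cela fournira une borne uniforme sur $\prod_\tau \tilde y_\tau^{k_\tau}$ dans $\bdr^+/t^k$, qui se traduira par $\|(\xbf'-\xbf_n)^\kbf\| \leq C p^{-n|\kbf|}$ apr\`es division par $p^{n|\kbf|}$.

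En combinant cette borne avec l'hypoth\`ese $|p^{n|\kbf|} a_\kbf|_p \to 0$ donn\'ee par la d\'efinition de $K_{r(n)}\dacc{\Tbf}_n$, il viendra $\|a_\kbf (\xbf'-\xbf_n)^\kbf\| \to 0$ dans $\bdr^+/t^k$, donc la convergence de la s\'erie. Enfin, pour l'invariance par $H_K$ : comme $H_K\subset H_F=\ker\chi_F$, chaque $v_\tau$, et donc $x'_\tau = \log v_\tau$, est fix\'e par $H_K$; les \'el\'ements $x_{n,\tau}\in F_\infty$ et les coefficients $a_\kbf\in K_{r(n)}\subset K_\infty$ le sont aussi, de sorte que les sommes partielles vivent dans le sous-espace ferm\'e $(\bdr^+/t^k)^{H_K}$, et la limite \'egalement. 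La seule v\'eritable difficult\'e sera l'adaptation multi-variables du lem.~\ref{cvthbdr}, qui n'est cependant qu'une r\'ep\'etition directe de son argument.
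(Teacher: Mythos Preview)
Your proposal is correct and follows essentially the same route as the paper, which simply writes ``Cela suit du lem.~\ref{cvthbdr}''. You have unpacked this one-liner: the multivariable adaptation of lem.~\ref{cvthbdr} that you sketch (bounding $\prod_\tau \tilde y_\tau^{k_\tau}$ modulo $t^k$ via the decomposition $\tilde y_\tau = \alpha_\tau + ([\tilde p]-p)\beta_\tau + t^k\gamma_\tau$) is exactly the natural extension of that argument, and your verification of the $H_K$-invariance of the partial sums is the right way to land in $(\bdr^+/t^k)^{H_K}$ rather than merely in $\bdr^+/t^k$.
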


\begin{proof}
Cela suit du lem.~\ref{cvthbdr}.
\end{proof}

Soit $t_F$ un \'el\'ement de $\bdr^+ \setminus \{0\}$ tel que $g(t_F) = \chi_F(g) t_F$. Rappelons que $t_F/t$ est une unit\'e de $\bdr^+$. L'\'el\'ement $t_F$ est un vecteur $F$-analytique, alors que $t$ est seulement $\Qp$-analytique, c'est pourquoi nous pr\'ef\'erons utiliser $t_F$ dans l'\'enonc\'e du th.~\ref{qpanbdr}.

\begin{theo}\label{qpanbdr}
Si $k \geq 1$, alors l'application $\oplus_{i=0}^{k-1} (\cup_{n \geq 1}  K_{r(n)} \dacc{\xbf'-\xbf_n}_n) \cdot t_F^i \to (\bdr^+/t^k)^{H_K}_{\la}$ est un isomorphisme d'espaces LB.
\end{theo}

\begin{proof}
Le th.~\ref{qpanfinf} correspond au cas $k=1$. Supposons que le th\'eor\`eme est vrai pour $k-1$, et soit $y \in (\bdr^+/t^k)^{H_K}_{\la}$. Par le th.~\ref{qpanfinf}, on a $\theta(y) =f(\xbf-\xbf_n)$ pour un $n \gg 0$ avec $f(\Tbf) \in  K_{r(n)} \dacc{\Tbf}_n$. Par le lem.~\ref{sumconv}, la s\'erie $f(\xbf'-\xbf_n)$ converge  dans $\bdr^+/t^k$. On a $y-f(\xbf'-\xbf_n)  \in t_F \cdot \bdr^+ /t^k$, et comme $t_F$ est lui-m\^eme un vecteur $\Qp$-analytique, on peut \'ecrire $y = f(\xbf'-\xbf_n) + t_F \cdot z$ avec $z \in (\bdr^+/t^{k-1})^{H_K}_{\la}$. Ceci permet de montrer le th\'eor\`eme par r\'ecurrence sur $k$.
\end{proof}

\subsection{Vecteurs localement $F$-analytiques}
\label{flocan}

Nous supposons toujours que $K_\infty$ est engendr\'e par les points de torsion d'un $\OO_F$-module formel. Soit $W^{\fan}$ l'espace des vecteurs localement $F$-analytiques d'une $\hat{K}_\infty$-repr\'esentation semi-lin\'eaire $W$ de dimension finie de $\Gamma_K$. Rappelons que $W\dacc{T}_n$ est l'espace vectoriel des s\'eries $\sum_{k \geq 0} w_k T^k$ o\`u $p^{nk} w_k \to 0$ dans $W$. Cet espace est muni d'une action de $G_n$ comme au d\'ebut du \S\,\ref{ltcalc}.

\begin{lemm}
\label{nplinj}
L'application $K_{n+k} \otimes_{K_n} W\dacc{T}_n^{G_n} \to W\dacc{T}_{n+k}^{G_{n+k}}$ est injective.
\end{lemm}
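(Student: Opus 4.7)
Le plan consiste \`a ramener l'\'enonc\'e \`a une descente galoisienne classique dans le \emph{petit} anneau $W\dacc{T}_n$. L'observation cl\'e est que, bien que le but de l'application soit $W\dacc{T}_{n+k}^{G_{n+k}}$, l'image se trouve en fait dans $W\dacc{T}_n$~: pour un tenseur pur $c\otimes f$ avec $c\in K_{n+k}\subset\hat{K}_\infty$ et $f\in W\dacc{T}_n^{G_n}$, le produit $cf$ reste dans $W\dacc{T}_n$ puisque ce dernier est stable par multiplication par les scalaires de $\hat{K}_\infty$. Comme l'inclusion $W\dacc{T}_n\hookrightarrow W\dacc{T}_{n+k}$ est injective, il suffit donc de montrer l'injectivit\'e de l'application $K_{n+k}\otimes_{K_n}W\dacc{T}_n^{G_n}\to W\dacc{T}_n$.

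Cette r\'eduction est importante, car le groupe $G_n$ n'agit pas en g\'en\'eral sur $W\dacc{T}_{n+k}$ (la translation $T\mapsto T+\ell(g)$ avec $|\ell(g)|\leq p^{-n}$ d\'eplace trop les points pour pr\'eserver la convergence sur le disque de rayon $p^{-(n+k)}$), tandis qu'il agit bien sur $W\dacc{T}_n$, de sorte qu'un argument de descente galoisienne y est disponible. Quitte \`a prendre $n$ assez grand pour que $G_n\subset\Gamma_K$, l'extension $K_{n+k}/K_n$ est galoisienne finie, de groupe $G_n/G_{n+k}$, et ce groupe apparait dans l'action de $G_n$ sur $W\dacc{T}_n$ restreinte aux coefficients.

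L'argument est ensuite un classique de type Dedekind~: on veut montrer qu'une famille $f_1,\dots,f_r\in W\dacc{T}_n^{G_n}$ qui est $K_n$-lin\'eairement ind\'ependante reste $K_{n+k}$-lin\'eairement ind\'ependante dans $W\dacc{T}_n$. Supposons qu'une relation non triviale $\sum_{i=1}^r c_i f_i=0$ avec $c_i\in K_{n+k}$ existe, avec un nombre minimal de $c_i$ non nuls; quitte \`a diviser, on peut supposer $c_1=1$. En appliquant $g\in G_n$, la semi-lin\'earit\'e et le fait que $g(f_i)=f_i$ donnent $\sum g(c_i)f_i=0$, puis par soustraction $\sum(g(c_i)-c_i)f_i=0$. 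Le coefficient en $f_1$ est nul, d'o\`u par minimalit\'e $g(c_i)=c_i$ pour tout $g\in G_n$, soit $c_i\in K_{n+k}^{G_n/G_{n+k}}=K_n$; mais ceci contredit l'ind\'ependance $K_n$-lin\'eaire des $f_i$.

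Le seul point d\'elicat est l'observation initiale que l'application se factorise par $W\dacc{T}_n$, sans quoi l'action de $G_n$ n'est pas disponible sur l'image; une fois ce point remarqu\'e, le reste est la descente galoisienne standard et ne pr\'esente pas d'obstacle.
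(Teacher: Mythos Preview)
Your proof is correct and follows essentially the same route as the paper's: a Dedekind-style minimal-relation argument using the action of $G_n$. The paper's two-line proof takes a minimal-length element $\sum a_i\otimes w_i$ with image~$0$, applies $g\in G_n$ to obtain another such element $\sum g(a_i)\otimes w_i$, and concludes $a_i\in K_n\cdot a_1$; this is exactly your argument after the normalization $c_1=1$. Your explicit observation that the image in fact lands in $W\dacc{T}_n$ (and not merely in $W\dacc{T}_{n+k}$), so that the $G_n$-action is available there, is precisely the point the paper uses implicitly when it writes ``il en est de m\^eme pour $\sum g(a_i)w_i$''.
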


\begin{proof}
Si $\sum_{i=1}^r a_i w_i$ est un \'el\'ement de longueur minimale ayant pour image $0$, alors il en est de m\^eme pour $\sum_{i=1}^r g(a_i) w_i$ si $g \in G_n$ de telle sorte que $a_i \in K_n \cdot a_1$, et l'application est bien injective.
\end{proof}

\begin{coro}
\label{wfanplin}
L'application $K_{n+k} \otimes_{K_n} W^{G_n\dan,\fan} \to W^{G_{n+k}\dan,\fan}$ est injective.
\end{coro}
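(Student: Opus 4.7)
The plan is to reduce this to Lemma \ref{nplinj} by means of the $F$-analytic expansion map $C$ from Lemma \ref{fanbsenf}. For any $w\in W^{G_n\dan,\fan}$, writing $g(w)=\sum_{k\geq 0}\ell(g)^k w_k$ on $G_n$, we set $C(w)=\sum_{k\geq 0}(-1)^k w_k T^k \in W\dacc{T}_n^{G_n}$. Evaluating at $g=e$ (or $T=0$) recovers $w=w_0$, so $C$ is injective; and since elements of $K_n$ are fixed by $G_n$, the Taylor coefficients satisfy $(aw)_k=aw_k$, which shows that $C$ is $K_n$-linear. Exactly the same construction gives an injective $K_{n+k}$-linear map $C:W^{G_{n+k}\dan,\fan}\to W\dacc{T}_{n+k}^{G_{n+k}}$, and the two maps are compatible in the sense that $C(aw)=aC(w)$ for $a\in K_{n+k}$ and $w\in W^{G_n\dan,\fan}\subset W^{G_{n+k}\dan,\fan}$ (using that $G_{n+k}$ fixes $a$).

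I would then form the commutative square
\[
\begin{array}{ccc}
K_{n+k}\otimes_{K_n}W^{G_n\dan,\fan} & \longrightarrow & W^{G_{n+k}\dan,\fan} \\
\downarrow & & \downarrow \\
K_{n+k}\otimes_{K_n}W\dacc{T}_n^{G_n} & \longrightarrow & W\dacc{T}_{n+k}^{G_{n+k}}
\end{array}
\]
where the vertical maps are induced by $C$ and the horizontal maps are the natural multiplication maps. The bottom horizontal arrow is injective by Lemma \ref{nplinj}. The right vertical arrow is injective by the left-inverse ``evaluation at $T=0$''. The left vertical arrow is $\mathrm{id}\otimes C$ applied to an injection of $K_n$-modules, and is injective because $K_{n+k}/K_n$ is a (faithfully flat) extension of fields.

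Chasing the diagram then finishes the proof: if $x\in K_{n+k}\otimes_{K_n}W^{G_n\dan,\fan}$ maps to $0$ on the right, its image in $W\dacc{T}_{n+k}^{G_{n+k}}$ is $0$, so its image in $K_{n+k}\otimes_{K_n}W\dacc{T}_n^{G_n}$ is $0$ by Lemma \ref{nplinj}, and hence $x=0$ by injectivity of $\mathrm{id}\otimes C$. The only delicate point is the $K_n$-linearity of $C$ and the compatibility $C(aw)=aC(w)$ for $a\in K_{n+k}$, which requires observing that $a$ is fixed by $G_{n+k}$ and that restriction of an $F$-analytic Taylor expansion from $G_n$ to $G_{n+k}$ preserves its coefficients; once this is verified the argument is purely formal.
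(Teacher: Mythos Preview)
Your proof is correct and follows the same approach as the paper: use the injective map $C:W^{G_n\dan,\fan}\to W\dacc{T}_n^{G_n}$ of Lemma~\ref{fanbsenf} (injective via evaluation at $T=0$) to reduce to Lemma~\ref{nplinj}. The paper's proof is simply the two-line version of your argument, leaving the commutative square and the diagram chase implicit.
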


\begin{proof}
L'application $C : W^{G_n\dan,\fan} \to W\dacc{T}_n$ donn\'ee comme pr\'ec\'edemment par $x \mapsto \sum_{k \geq 0} (-1)^k x_k T^k$ envoie $W^{G_n\dan,\fan}$ dans $W\dacc{T}_n^{G_n}$ par le lem.~\ref{fanbsenf}. Elle est manifestement injective, et le corollaire suit du lem.~\ref{nplinj}.
\end{proof}

\begin{theo}
\label{fanvec}
Si $W$ est une $\hat{K}_\infty$-repr\'esentation semi-lin\'eaire de dimension finie de $\Gamma_K$, alors l'application $\hat{K}_\infty^{\la} \otimes_{K_\infty} W^{\fan} \to W^{\la}$ est un isomorphisme.
\end{theo}

\begin{proof}
Montrons d'abord que $\hat{K}_\infty^{\la} \otimes_{K_\infty} W^{\fan} \to W^{\la}$ est injective. Si elle ne l'est pas, alors soit $\sum_{i=1}^r \alpha_i x_i = 0$ une relation non triviale de longueur minimale, avec $\alpha_i \in \hat{K}_\infty^{\la}$ et $x_i \in W^{\fan}$. On peut supposer que $\alpha_1=1$. Si $\tau \in \emb \setminus \{ \Id \}$, alors $\nabla_\tau(x_i) = 0$ et donc $\sum_{i=2}^r \nabla_\tau(\alpha_i) x_i = 0$. Cette relation \'etant plus courte, on a $\nabla_\tau(\alpha_i) = 0$ pour tout $\tau \in \emb \setminus \{ \Id \}$, et donc $\alpha_i$ appartient \`a $\hat{K}_\infty^{\fan}  = K_\infty$. La relation \'etait donc triviale.

Montrons \`a pr\'esent que $\hat{K}_\infty^{\la} \otimes_{K_\infty} W^{\fan} \to W^{\la}$ est surjective. L'injectivit\'e implique que $\dim_{K_\infty} W^{\fan}$ est de dimension finie et donc, par le cor.~\ref{wfanplin}, que l'application $K_\infty \otimes_{K_m} W^{G_m\dan,\fan} \to W^{\fan}$ est un isomorphisme si $m \gg 0$.  Si $z \in W^{G_\ell\dan}$ pour $\ell \geq 1$, alors les m\^emes arguments que dans la preuve du th.~\ref{qpanfinf} montrent qu'il existe $n$ et $m \geq \max(r(n),\ell)$ tels que les s\'eries
\[ y_\ibf = \sum_{\kbf \in \NN^{\emb \setminus \{ \Id \} }} (-1)^{|\kbf|} (\xbf-\xbf_n)^\kbf z_{\kbf+\ibf} \binom{\kbf+\ibf}{\kbf} \]
convergent dans $W^{G_m\dan}$ pour tout $\ibf$,  que $y_\ibf$ est localement $F$-analytique, et que l'on a $z = \sum_{\ibf \in \NN^{\emb \setminus \{ \Id \} }} y_\ibf (\xbf-\xbf_n)^\ibf$ dans $W^{G_m\dan}$. Comme $W^{G_m\dan,\fan}$ est de dimension finie sur $K_m$, ceci implique que $z \in \hat{K}_\infty^{G_m\dan} \otimes_{K_m} W^{G_m\dan,\fan}$ et la surjectivit\'e en d\'ecoule.
\end{proof}

Pour terminer, remarquons que $W^{\fan}$ peut \^etre construit \`a partir de la th\'eorie de Sen classique associ\'ee au caract\`ere $\chi = \Nm_{F/\Qp}(\chi_F) : \Gamma_K \to \Zp^\times$. Plus pr\'ecis\'ement, il existe un isomorphisme $S : W^{\fan} \to K_\infty \otimes_{K_\infty^{\chi}} \dsen^{\chi}(W)$, qui v\'erifie $S \circ \nabla_{\Id} = \Theta_{\chi} \circ S$. On a   $S(x) = \exp(\alpha \nabla_{\Id})(x)$ o\`u $\alpha \in \pi_F^n \OO_{\hat{K}_\infty}$ est tel que $g(\alpha)-\alpha = \log \chi(g)/\chi_F(g)$ pour $g \in \Gamma_m$ avec $m$ et $n \gg 0$.

\subsection{Le cas de l'extension de Kummer}

Si $n\geq 1$, soient $\omega\in K^*$ pas une racine de l'unit\'e,
$\omega_n = \omega^{1/p^n}$ et $K_n =  K(\omega_n,\zeta_{p^n})$,
 et soit $K_\infty = \cup_{n \geq 1} K_n$. Si $\Gamma_K=\Gal(K_\infty/K)$,
on a une suite exacte $0 \to \Zp \to \Gamma_K \to \Zp^\times$, 
l'image de la fl\`eche de droite \'etant un sous-groupe ouvert de $\Zp^\times$. 
Soit $\tau \in \Gal(K_\infty / K(\zeta_{p^\infty}))$ un g\'en\'erateur topologique. 
Si $g \mapsto c(g)$ d\'enote le cocycle de Kummer associ\'e \`a $\omega$,
 alors $K_\infty$ est le noyau de 
$g \mapsto \smat{\chi(g) & c(g) \\ 0 & 1}$. Comme $H^1(G_K,\Cp(1)) = \{ 0 \}$, il existe $\alpha \in \Cp$ 
tel que $c(g) = g(\alpha)\chi(g)-\alpha$. Ceci implique que $g(\alpha) = \alpha/\chi(g) + c(g)/\chi(g)$, 
et donc que $\alpha \in \hat{K}_\infty^{\la}$. Dans des notations analogues 
\`a celles du \S\,\ref{ltper}, on a le r\'esultat suivant.

\begin{prop}\label{kumlocan}
On a $\hat{K}_\infty^{\la} = \cup_{n \geq 1} K_{r(n)} \dacc{ \alpha - \alpha_n }_n$.
\end{prop}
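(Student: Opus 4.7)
The plan is to mirror, essentially line for line, the proof of Theorem~\ref{qpanfinf}. The element $\alpha$ plays the role that the Lubin-Tate periods $x_\tau$ played in \S\ref{ltper}: it is a locally $\Qp$-analytic element of $\hat{K}_\infty$ whose derivative along the ``new'' direction (the Lie algebra element attached to the Kummer subgroup $\Gal(K_\infty/K(\zeta_{p^\infty})) \cong \Zp$) is nontrivial. Since $\dim \Gamma_K = 2$ and the one-parameter subgroup associated to the cyclotomic character is already controlled by classical Sen theory, there is exactly one ``extra'' variable, whence the single indeterminate $\alpha-\alpha_n$.

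The easy inclusion $\cup_n K_{r(n)}\dacc{\alpha-\alpha_n}_n \subset \hat{K}_\infty^{\la}$ is a direct translation of the convergence argument of \S\ref{ltcalc}. Since $\alpha-\alpha_n \in \hat{K}_\infty^{G_m\dan}$ with $\|\alpha-\alpha_n\|_{G_m}=\|\alpha-\alpha_n\|\leq p^{-n}$ for $m\geq r(n)$, any series $\sum_{k\geq 0} a_k(\alpha-\alpha_n)^k$ with $a_k\in K_{r(n)}$ and $p^{nk}a_k\to 0$ defines an element of $\hat{K}_\infty^{G_m\dan}$.

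For the reverse inclusion, I would take $z \in \hat{K}_\infty^{G_\ell\dan}$, denote by $\nabla$ the derivation in the Kummer direction, set $z_k=\nabla^k(z)/k!$, and define
\[ y_i = \sum_{k\geq 0}(-1)^k (\alpha-\alpha_n)^k \binom{k+i}{k} z_{k+i} \]
for each $i\geq 0$. By Lemmas~\ref{liecont} and~\ref{descnm}, one can choose $n$ so that $\|z_k\|_{G_m} \leq p^{(n-1)k}\|z\|_{G_\ell}$ for $m\geq\ell$, ensuring convergence of $y_i$ in $\hat{K}_\infty^{G_m\dan}$ for $m\geq\max(r(n),\ell)$. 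The same combinatorial identity used at the end of \S\ref{ltcalc} (namely $\sum_\ell (-1)^\ell\binom{m-i}{\ell}=0$ for $m>i$) gives both $\nabla(y_i)=0$ and the telescoped identity $z=\sum_i y_i(\alpha-\alpha_n)^i$.

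The main obstacle is the Kummer analogue of Corollary~\ref{finfanq}: an element of $\hat{K}_\infty^{G_n\dan}$ annihilated by $\nabla$ must lie in $K_n$. I would handle this by mimicking the strategy of \S\ref{ltcalc}: inject the ring $\hat{K}_\infty \dacc{T}_n^{G_n}$ into the Sen ring $\bsen^n$ attached to the cyclotomic character $\chi$ (which remains nontrivial on $\Gamma_K$, since its image in $\Zp^\times$ is open), then invoke Theorem~\ref{pcsenth} to identify its $G_n$-invariants with $K_n$. Once the $y_i$ are known to lie in $K_{r(n)}$, the continuous bijection $\cup_n K_{r(n)}\dacc{\alpha-\alpha_n}_n \to \hat{K}_\infty^{\la}$ between LB spaces is upgraded to an isomorphism by the open mapping theorem, exactly as in the last line of the proof of Theorem~\ref{qpanfinf}.
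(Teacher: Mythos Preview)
Your proposal is correct and follows the same architecture as the paper's sketch: form the $y_i$ by the alternating series in $(\alpha-\alpha_n)$, show $\nabla_\tau(y_i)=0$, recover $z=\sum_i y_i(\alpha-\alpha_n)^i$, and conclude by the open mapping theorem. The only difference lies in how you propose to prove the Kummer analogue of Corollary~\ref{finfanq}. You mimic the Lubin--Tate argument literally, sending $y_i$ via the $C$ map into $\hat{K}_\infty\dacc{T}_n$ and injecting into $\bsen^n$ for the cyclotomic character to invoke Theorem~\ref{pcsenth}; this works, and is in fact simpler here than in \S\ref{ltcalc} since $\chi_{\cyc}$ is already $\Zp^\times$-valued and no change of variable (Lemma~\ref{approz}) is needed. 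The paper instead observes directly that $\nabla_\tau(y_i)=0$ forces $y_i$ to be fixed by an open subgroup of $\langle\tau\rangle$, hence $y_i\in\widehat{K_m(\zeta_{p^\infty})}^{\la}$, and then applies the one-dimensional result (Theorem~\ref{finisan}) to this cyclotomic subfield. Both routes rest on Tate's normalized traces and yield the same conclusion; the paper's is marginally more direct in that it avoids rebuilding the $C$ map.
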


\begin{proof}
Donnons une id\'ee rapide de la d\'emonstration. 
Soit $x\in \hat{K}_\infty^{G_n\dan}$ et soit $\nabla_\tau$ l'op\'erateur 
diff\'erentiel associ\'e \`a $\tau$.
Soit $y_i = \sum_{k \geq 0} (-1)^k (\alpha - \alpha_m)^k \nabla_\tau^{k+i}(x) \binom{k+i}{k}$ 
comme dans la preuve du th.~\ref{qpanfinf}. 
Les m\^emes arguments montrent qu'il existe $m \geq n$ tel que $y_i \in \hat{K}_\infty^{G_m\dan}$ 
pour tout $i$, et que $x = \sum_{i \geq 0} y_i (\alpha - \alpha_m)^i$ dans $\hat{K}_\infty^{G_m\dan}$. 
On a $\nabla_\tau(y_i) = 0$ et 
donc $y_i \in \widehat{K_m(\zeta_{p^\infty})}^{\la}$. On en d\'eduit que $y_i \in K_m$, et
le r\'esultat.
\end{proof}

\section{Calcul de $\hat{K}_\infty^{\fin}$ et $\hat{K}_\infty^{\la}$ dans le cas $\SL_2$}
\label{sl2sec}

Dans ce chapitre, on s'int\'eresse \`a pr\'esent au cas o\`u $\Gamma_K$ est isomorphe \`a un sous-groupe ouvert de $\SL_2(\Zp)$, de sorte que $\Lie(\Gamma_K) = \LieSL$, un $\Qp$-espace vectoriel de dimension $3$.

\subsection{Vecteurs finis et poids de Hodge-Tate}
\label{finsl}

Dans tout ce chapitre, par {\og repr\'esentation \fg} de $\LieSL$, on entend repr\'esentation $\Qp$-lin\'eaire de dimension finie.  Le r\'esultat suivant est classique.

\begin{prop}\label{repirrsl}
Soit $V$ la repr\'esentation standard de $\LieSL$.

{\rm (i)}
 Si $X$ est une repr\'esentation irr\'eductible de $\LieSL$, alors $X = \Sym^k V$ avec $k \geq 0$;

{\rm (ii)}
 Toute repr\'esentation de $\LieSL$ est somme directe de repr\'esentations irr\'eductibles;
\end{prop}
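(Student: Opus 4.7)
The statement is classical, so the plan is just to recall the standard Lie-theoretic arguments. I would work with the basis $(H,E,F)$ of $\LieSL$ satisfying $[H,E]=2E$, $[H,F]=-2F$, $[E,F]=H$. Since $\Sym^k V$ is defined over $\mathbf{Q}$ and is absolutely irreducible, both statements are insensitive to extending scalars from $\Qp$ to $\Qpbar$, so I would work over $\Qpbar$ throughout to guarantee that $H$ has eigenvectors.

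For (i), given $X$ irreducible, I would first produce a highest weight vector: starting from any $H$-eigenvector and applying $E$ repeatedly (which raises the $H$-eigenvalue by $2$), finite-dimensionality forces us to reach a nonzero $v \in X$ with $Ev=0$ and $Hv=\lambda v$. Setting $v_k = F^k v$, the commutator identity $[E,F^k] = k F^{k-1}(H - (k-1))$, proved by induction, gives $E v_k = k(\lambda-k+1)v_{k-1}$. The $v_k$ cannot all be nonzero, so $v_{n+1}=0\neq v_n$ for some $n$, and applying $E$ to $v_{n+1}=0$ forces $\lambda = n \in \ZZ_{\geq 0}$. The span of $v_0,\dots,v_n$ is then a nonzero $\LieSL$-submodule, hence equals $X$ by irreducibility, and comparing the $(H,E,F)$-action with the one on homogeneous polynomials of degree $n$ in two variables identifies $X \cong \Sym^n V$.

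For (ii) my plan is the Casimir argument for Weyl's complete reducibility theorem. The element $C = H^2 + 2EF + 2FE$ lies in the center of $U(\LieSL)$, so it acts as an endomorphism of every $\LieSL$-representation; direct computation on a highest weight vector shows that $C$ acts by the scalar $n(n+2)$ on $\Sym^n V$, which is nonzero for $n\geq 1$. Given a short exact sequence $0 \to W' \to W \to W'' \to 0$, a standard dévissage (splitting $W$ into generalized eigenspaces of $C$, then inducting on $\dim W'$) reduces to the case where $W''$ is the trivial one-dimensional representation and $W'$ is irreducible. If $W' = \Sym^n V$ with $n\geq 1$, then $C|_{W'}$ is a nonzero scalar and $C|_{W''}=0$, so $\ker(C|_W)$ is an $\LieSL$-stable complement to $W'$. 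If $W'$ is trivial, then the $\LieSL$-action on $W$ factors through an abelian quotient, hence is trivial since $\LieSL = [\LieSL,\LieSL]$, and any vector-space splitting works.

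The main obstacle is complete reducibility in (ii); part (i) is a routine highest-weight computation, but (ii) requires either the Casimir element trick sketched above or the cohomological Whitehead-lemma approach. The Casimir route is the cleanest once (i) is available to identify the irreducible constituents.
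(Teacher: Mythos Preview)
Your argument is correct and is precisely the classical proof. The paper does not give an independent argument: it simply cites Bourbaki, referring to \S 6.2 of \emph{Groupes et alg\`ebres de Lie, Chapitre~I} for (ii) and \S 1.3 of \emph{Chapitre~VIII} for (i), and your sketch is exactly the content of those references (highest-weight classification for (i), Weyl's theorem via the Casimir element for (ii)).
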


\begin{proof}
Le (ii) est  dans le \S\,6.2 de \cite{GRALG1}, le (i) dans le \S\,1.3 de \cite{GRALG8}.
\end{proof}

Si $X$ est une repr\'esentation de $\Gamma_K$, alors par la th\'eorie de Lie, $X$ est aussi muni d'une action compatible de $\LieSL$. L'isomorphisme entre $\Gamma_K$ et un sous-groupe ouvert de $\SL_2(\Zp)$ se traduit par l'existence d'un $\Qp$-espace vectoriel $V$ de dimension $2$, sur lequel $\Gamma_K$ agit par des automorphismes de d\'eterminant $1$. L'espace $V$ muni de l'action correspondante de $\LieSL$ est alors la repr\'esentation standard de $\LieSL$ comme ci-dessus.

Comme on a un morphisme $G_K \to \Gamma_K$, une repr\'esentation de $\Gamma_K$ est aussi une repr\'esentation de $G_K$. Les poids de Hodge-Tate de $V$ sont $s$ et $-s$ avec $s \in \Qpbar$. On suppose que $s \neq 0$ (on obtient un exemple de telle repr\'esentation en partant d'une forme modulaire $f$ de poids $k\geq 2$,
non CM et non ordinaire en $p$, et en tordant la restriction \`a $G_{\Qp(\mu_{2p})}$
de la repr\'esentation $V_f$ associ\'ee \`a $f$ par $(\det V_f)^{-(k-1)/2}$; on a alors $s=\frac{k-1}{2}$).
 Les poids de Hodge-Tate de $\Sym^k V$ sont alors $\{ -ks,-(k-2)s, \hdots, ks \}$ si $k \geq 0$.

\begin{coro}
\label{toutalg}
Si $X$ est une repr\'esentation de $G_K$ qui se factorise par $\Gamma_K$, alors l'op\'erateur de Sen de $X$ est semisimple, \`a valeurs propres dans $s \cdot \ZZ$.
\end{coro}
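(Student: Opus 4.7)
Mon plan est de combiner la classification des repr\'esentations de $\LieSL$ (prop.~\ref{repirrsl}) avec le fait, rappel\'e dans la rem.~\ref{rem2}(ii), que l'op\'erateur de Sen de $V$ appartient \`a $\Cp\otimes_{\Qp}\LieSL$.

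Je commencerais par observer que, comme $X$ se factorise par $\Gamma_K$, la th\'eorie de Lie munit $X$ d'une structure de repr\'esentation $\Qp$-lin\'eaire de $\LieSL$. La prop.~\ref{repirrsl} fournit alors une d\'ecomposition $X\simeq\bigoplus_i\Sym^{k_i}V$ comme repr\'esentation de $\LieSL$, qui est automatiquement une d\'ecomposition en sous-$G_K$-repr\'esentations puisque l'image de $\Gamma_K$ est Zariski-dense dans $\SL_2$.

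Ensuite, j'utiliserais la rem.~\ref{rem2}(ii) pour identifier $\Theta_{\rm Sen}\in\Cp\otimes_{\Qp}\mathrm{End}(V)$ \`a un \'el\'ement $\Theta$ de $\Cp\otimes_{\Qp}\LieSL$. Comme les poids de Hodge-Tate $\pm s$ de $V$ sont distincts (puisque $s\neq 0$), l'action de $\Theta$ sur $V$ est d\'ej\`a diagonalisable, et dans une base convenable $\Theta$ s'\'ecrit $\mathrm{diag}(s,-s)$. Par fonctorialit\'e de la construction de l'op\'erateur de Sen (qui co\"\i ncide, \`a travers la repr\'esentation d\'eriv\'ee de l'alg\`ebre de Lie, avec l'action de $\Theta$ sur n'importe quelle sous-quotient ou construction tensorielle \`a partir de $V$), l'op\'erateur de Sen sur $\Sym^k V$ est l'op\'erateur induit par $\Theta$, qui est diagonal dans la base sym\'etris\'ee associ\'ee avec valeurs propres $(k-2i)s$ pour $i=0,\ldots,k$, toutes dans $s\cdot\ZZ$. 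Sommant sur les composantes de la d\'ecomposition de $X$, on obtient bien que $\Theta_{\rm Sen}$ sur $X$ est semisimple \`a valeurs propres dans $s\cdot\ZZ$.

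Le point d\'elicat, \`a mes yeux, est la derni\`ere \'etape de fonctorialit\'e : il faut s'assurer que l'op\'erateur de Sen sur $X$ est bien donn\'e par l'action de l'unique \'el\'ement $\Theta\in\Cp\otimes\LieSL$ d\'etermin\'e par $V$, \`a travers la repr\'esentation de $\LieSL$ sur $X$. Cela d\'ecoule n\'eanmoins directement de ce que $\Theta_{\rm Sen}$ s'interpr\`ete comme une d\'eriv\'ee (dans l'esprit du $\nabla$ de la rem.~\ref{rem2}(i)) de l'action de $\Gamma_K$, construction manifestement compatible aux morphismes \'equivariants et aux op\'erations tensorielles.
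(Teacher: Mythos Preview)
Your argument is correct and follows essentially the same route as the paper, whose proof is the single line ``cela suit des remarques pr\'ec\'edentes et de la prop.~\ref{repirrsl}'': the preceding remarks already record that the Hodge-Tate weights of $\Sym^k V$ are $\{-ks,-(k-2)s,\dots,ks\}$, and you simply make explicit the functoriality argument underlying that computation. One small imprecision: the $\LieSL$-decomposition is a priori only stable under an open subgroup $\Gamma_L\subset\Gamma_K$ (as the paper itself notes in the proof of prop.~\ref{vecfinec}), not necessarily under all of $\Gamma_K$ via Zariski density, but since $\Theta_{\rm Sen}$ is insensitive to passage to an open subgroup this does not affect the conclusion.
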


\begin{proof}
Cela suit des remarques pr\'ec\'edentes et de la prop.~\ref{repirrsl}.
\end{proof}

\begin{prop}\label{vecfinec}
Soit $W$ une $\hat{K}_\infty$-repr\'esentation semi-lin\'eaire de dimension finie de~$\Gamma_K$.

{\rm (i)}
 Si $W^{\fin} \neq \{0\}$, alors $W$ a un poids de Hodge-Tate qui appartient \`a $s \cdot \ZZ$;

{\rm (ii)}
 Si $W^{\fin}$ contient une base de $W$, alors l'op\'erateur de Sen de $W$ est semisimple, \`a valeurs propres dans $s \cdot \ZZ$.
\end{prop}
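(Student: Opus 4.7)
Mon plan est d'utiliser le cor.~\ref{toutalg} en construisant, \`a partir des vecteurs $K$-finis de $W$, une $\Qp$-repr\'esentation lin\'eaire de dimension finie de $G_K$ qui se factorise par $\Gamma_K$. Le point central est que, si $X$ est un tel sous-$K$-espace vectoriel de $W$ stable par $\Gamma_K$, l'inclusion $X \hookrightarrow W$ s'\'etend en un morphisme $G_K$-\'equivariant $\Cp \otimes_{\Qp} X \to \Cp \otimes_{\hat{K}_\infty} W$; on transportera ainsi les propri\'et\'es de l'op\'erateur de Sen fournies par le cor.~\ref{toutalg}.

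Pour le (i), je prends $w \in W^{\fin} \setminus \{0\}$ et $X \subset W$ un sous-$K$-espace vectoriel de dimension finie stable par $\Gamma_K$ contenant $w$. Vu comme $\Qp$-repr\'esentation de $G_K$ via la projection $G_K \twoheadrightarrow \Gamma_K$, son op\'erateur de Sen est semisimple \`a valeurs propres dans $s \cdot \ZZ$ par le cor.~\ref{toutalg}. Le morphisme $G_K$-\'equivariant $\Cp \otimes_{\Qp} X \to \Cp \otimes_{\hat{K}_\infty} W$ a une image $Y$ qui est une sous-$\Cp$-repr\'esentation semi-lin\'eaire non nulle: comme quotient de $\Cp \otimes_{\Qp} X$, ses poids de Hodge-Tate sont dans $s\cdot\ZZ$; comme sous-objet de $\Cp \otimes_{\hat{K}_\infty} W$, ce sont aussi des poids de Hodge-Tate de $W$, d'o\`u le r\'esultat.

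Pour le (ii), je choisis une base $w_1,\dots,w_d$ de $W$ sur $\hat{K}_\infty$ form\'ee de vecteurs dans $W^{\fin}$ et je prends pour $X$ la somme finie des sous-$K$-espaces de dimension finie stables par $\Gamma_K$ contenant chacun des $w_i$. Alors $\hat{K}_\infty \otimes_K X \to W$ est surjective, d'o\`u une surjection $\Cp \otimes_K X \twoheadrightarrow \Cp \otimes_{\hat{K}_\infty} W$ de $\Cp$-repr\'esentations semi-lin\'eaires de $G_K$. L'observation-cl\'e est la d\'ecomposition
\[ \Cp \otimes_{\Qp} X = \bigoplus_{\sigma : K \hookrightarrow \Qpbar} \Cp \otimes_{K,\sigma} X, \]
qui fait de $\Cp \otimes_K X$ un facteur direct $G_K$-stable. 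Le cor.~\ref{toutalg} donnant la semisimplicit\'e de l'op\'erateur de Sen sur $\Cp \otimes_{\Qp} X$ \`a valeurs propres dans $s\cdot\ZZ$, cette propri\'et\'e passe \`a chacun des facteurs directs, donc \`a $\Cp \otimes_K X$, puis \`a son quotient $\Cp \otimes_{\hat{K}_\infty} W$, et enfin \`a l'op\'erateur de Sen de $W$.

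La difficult\'e principale est de nature ``fonctorielle'': il faut s'assurer que les op\'erations (sous-objet, quotient, facteur direct dans la d\'ecomposition selon les plongements, passage de $\hat{K}_\infty$ \`a $\Cp$) pr\'eservent effectivement la propri\'et\'e ``op\'erateur de Sen semisimple \`a valeurs propres dans $s\cdot\ZZ$'', et que les poids de Hodge-Tate de $W$ sont bien calcul\'es par l'op\'erateur de Sen agissant sur $\dsen(\Cp \otimes_{\hat{K}_\infty} W)$ via l'extension cyclotomique. Une fois ce cadre \'etabli, l'exactitude de $\dsen$ sur la cat\'egorie des $\Cp$-repr\'esentations semi-lin\'eaires de $G_K$ permet de propager les propri\'et\'es voulues le long des fl\`eches construites.
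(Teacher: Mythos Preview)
Your argument is correct and follows the same route as the paper's: extract a finite-dimensional $\Gamma_K$-stable $\Qp$-subspace of $W^{\fin}$, invoke the known Sen-theoretic properties of such representations (you via cor.~\ref{toutalg}, the paper directly via the irreducible $\LieSL$-pieces $Y(x)\cong\Sym^k V$ from prop.~\ref{repirrsl}), and push forward along the natural map to $\Cp\otimes_{\hat{K}_\infty} W$. Note that your decomposition $\Cp\otimes_{\Qp} X=\bigoplus_\sigma \Cp\otimes_{K,\sigma} X$ in~(ii), while correct, is an unnecessary detour: the map $\Cp\otimes_{\Qp} X\to\Cp\otimes_{\hat{K}_\infty} W$ is itself surjective (since $X$ contains a basis of $W$), which is exactly how the paper argues.
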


\begin{proof}
Soit $x \in W^{\fin}$. Par la prop.~\ref{repirrsl}, il existe une repr\'esentation irr\'eductible $Y(x) \subset W^{\fin}$ de $\LieSL$ telle que $Y(x)$ contient $x$. Par la prop.~\ref{repirrsl}, $Y(x) = \Sym^k V$ pour un $k \geq 0$. Il existe un sous-groupe ouvert $\Gamma_L$ de $\Gamma_K$ tel que $Y(x)$ est stable sous l'action de $\Gamma_L$, et les poids de Hodge-Tate de $Y(x)$ sont donc dans $s \cdot \ZZ$.

Le (i) r\'esulte de ce que l'on a une application non nulle $\Cp \otimes_{\Qp} Y(x) \to \Cp \otimes_{\hat{K}_\infty} W$, et le 
(ii) de ce que l'on a une application surjective $\Cp \otimes_{\Qp} W^{\fin} \to \Cp \otimes_{\hat{K}_\infty} W$ et du fait que $W^{\fin} = \cup_{x \in W^{\fin}} Y(x)$.
\end{proof}

\'Ecrivons $V=\Qp e_1 \oplus \Qp e_2$ de telle sorte que l'action de $\Gamma_K$ dans la base $(e_1,e_2)$ soit compatible avec l'isomorphisme entre $\Gamma_K$ et un sous-groupe ouvert de $\SL_2(\Zp)$. 
Les poids de Hodge-Tate de $\Sym^2 V$ sont $-s,0,s$,
et il existe donc un plongement de $\Sym^2 V$ dans~$\Cp$. Soient 
\[ x_1=e_1 \otimes e_1 \qquad x_2=e_2 \otimes e_2 \qquad y = \frac{e_1 \otimes e_2 + e_2 \otimes e_1}{2} \qquad \delta = \frac{e_1 \otimes e_2 - e_2 \otimes e_1}{2}. \]
les \'el\'ements correspondants de $\Cp$, avec $\Sym^2 V = \Qp x_1 \oplus \Qp x_2 \oplus \Qp y$ et $\det V = \Qp \delta$, ce qui fait que $\delta \in K^\times$
puisque $\det V$ est la repr\'esentation triviale, \'etant donn\'e que $\Gamma_K\subset \SL_2(\Zp)$.

\begin{prop}\label{finkinf}
On a 
\[ \hat{K}_\infty^{\fin} = \oplus_{k \geq 0} (K_\infty \otimes_{\Qp} \Sym^{2k} V)=
K_\infty[x_1,x_2,y]/(y^2-x_1x_2-\delta^2). \]
\end{prop}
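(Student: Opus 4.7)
The plan is to show that the tautological map $R := K_\infty[x_1,x_2,y]/(y^2 - x_1 x_2 - \delta^2) \to \hat{K}_\infty^{\fin}$ is well-defined and an isomorphism of $K_\infty$-algebras. First, for well-definedness, fix a Hodge--Tate decomposition $\Cp \otimes_\Qp V = \Cp \cdot f_+ \oplus \Cp \cdot f_-$ (with $f_\pm$ of HT weight $\pm s$), write $e_1 = \alpha_+ f_+ + \alpha_- f_-$ and $e_2 = \beta_+ f_+ + \beta_- f_-$, and observe that the $G_K$-equivariant projection from $\Sym^2 V$ onto the HT-weight-$0$ line of $\Cp \otimes \Sym^2 V$ sends $e_1^2, e_2^2, e_1 e_2$ to scalar multiples of $2\alpha_+\alpha_-, 2\beta_+\beta_-, \alpha_+\beta_- + \alpha_-\beta_+$, which are the given $x_1, x_2, y$. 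These lie in $\Cp^{H_K} = \hat{K}_\infty$, are $K$-finite because they live in the $3$-dimensional $\Gamma_K$-stable subspace $\Sym^2 V$, and satisfy the Pl\"ucker identity
\[
y^2 - x_1 x_2 = (\alpha_+ \beta_- + \alpha_- \beta_+)^2 - 4 \alpha_+ \alpha_- \beta_+ \beta_- = (\alpha_+ \beta_- - \alpha_- \beta_+)^2 = \delta^2,
\]
which makes the map well-defined.

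Next I would identify the $\Gamma_K$-structure of $R$ and handle injectivity: the affine quadric $X := \{y^2 - x_1 x_2 = \delta^2\} \subset \Sym^2 V$ is smooth and irreducible (since $\delta \neq 0$) and is a single $\SL_2$-orbit with stabilizer a maximal torus $T$, so $X \cong \SL_2/T$ as a $K$-variety. Frobenius reciprocity (or equivalently, the fact that the $T$-fixed subspace of $\Sym^k V$ is $1$-dimensional when $k$ is even and zero otherwise) then gives
\[
\mathcal{O}(X) = \bigoplus_{k \geq 0} \Sym^{2k}V
\]
as $\SL_2$-representation over $K$, and extension of scalars yields $R = \bigoplus_{k \geq 0} K_\infty \otimes_\Qp \Sym^{2k} V$ as $\Gamma_K$-representation. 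Injectivity of $R \to \hat{K}_\infty^{\fin}$ is now immediate: $R$ is a domain, and the kernel is a $\Gamma_K$-stable prime, which cuts out a $\Gamma_K$-stable closed subvariety of $X$; since $\Gamma_K$ is Zariski dense in $\SL_2$ (its Lie algebra is all of $\LieSL$), this subvariety is $\SL_2$-stable and hence equal to $\emptyset$ or $X$, so the kernel is $(1)$ or $(0)$, and since the map is nonzero it must be $(0)$.

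The main obstacle is surjectivity, which rests on a multiplicity computation. Any $x \in \hat{K}_\infty^{\fin}$ sits in a finite-dimensional $\Gamma_K$-stable $\Qp$-subspace, which by Prop.~\ref{repirrsl} and Prop.~\ref{vecfinec}(i) (HT weight $0$ must appear to embed in $\hat{K}_\infty$) decomposes as a sum of $\Sym^{2m}V$'s. So it suffices to check that for each $m \geq 0$, the $\Sym^{2m}V$-isotypic component of $\hat{K}_\infty^{\fin}$ (as $K_\infty$-module with commuting $\LieSL$-action) equals $K_\infty \otimes_\Qp \Sym^{2m}V$, matching the corresponding $R$-summand. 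For this, any $\LieSL$-equivariant map $\psi: \Sym^{2m}V \to \hat{K}_\infty^{\fin}$ has finite-dimensional $\LieSL$-stable image and is therefore $\Gamma_N$-equivariant for a sufficiently small open subgroup $\Gamma_N \subset \Gamma_K$ (by exponentiation). Setting $K_N = K_\infty^{\Gamma_N}$, the Hodge--Tate decomposition of $\Cp \otimes \Sym^{2m} V^*$ together with Tate's theorem gives
\[
\mathrm{Hom}_{\Gamma_N}(\Sym^{2m}V, \hat{K}_\infty) = (\Sym^{2m}V^* \otimes_\Qp \Cp)^{G_{K_N}} = K_N,
\]
only the HT weight $0$ summand contributing. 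Taking the union as $\Gamma_N$ shrinks yields $\mathrm{Hom}_{\LieSL}(\Sym^{2m}V, \hat{K}_\infty^{\fin}) = \bigcup_N K_N = K_\infty$, so the $\Sym^{2m}V$-isotypic component is exactly $K_\infty \otimes_\Qp \Sym^{2m}V$, which is the image of the corresponding summand of $R$; the map is thus surjective, and combined with injectivity it is an isomorphism.
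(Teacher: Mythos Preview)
Your proof is correct, and the core mechanism for surjectivity is the same as the paper's: both arguments reduce to the fact that $\dim_{K_N}\operatorname{Hom}_{G_{K_N}}(\Sym^{2m}V,\Cp)=1$ (only the Hodge--Tate weight $0$ contributes), so every copy of $\Sym^{2m}V$ inside $\hat{K}_\infty^{\fin}$ is a $K_\infty$-multiple of the chosen one.

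Where you genuinely diverge is in the treatment of the algebra identity $\bigoplus_{k\geq 0}\Sym^{2k}V \cong K[x_1,x_2,y]/(y^2-x_1x_2-\delta^2)$ and of injectivity. The paper dispatches the second equality in one line (``express $\Sym^{2k}V$ in terms of $\Sym^2V$ and $\det V$'') and gets injectivity implicitly from multiplicity one; you instead identify the affine quadric as the homogeneous space $\SL_2/T$, invoke Frobenius reciprocity for the decomposition, and prove injectivity by a Zariski-density argument (the kernel cuts out a $\Gamma_K$-stable, hence $\SL_2$-stable, closed subvariety of a single orbit). Your route is more self-contained and makes the geometry explicit, which also clarifies why only even symmetric powers appear; the paper's route is quicker but leans on standard $\SL_2$-representation facts without spelling them out. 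Both arguments have the same depth; yours trades brevity for transparency.
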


\begin{proof}
Soit $x \in \hat{K}_\infty^{\fin}$. Par la prop.~\ref{repirrsl}, il existe une 
repr\'esentation irr\'eductible $Y(x) \subset \hat{K}_\infty^{\fin}$ de $\LieSL$ telle que 
$Y(x)$ contient $x$. Par la prop.~\ref{repirrsl}, $Y(x)$ est isomorphe \`a $\Sym^n V$ 
pour un $n \geq 0$. Il existe un sous-groupe ouvert $\Gamma_L$ de $\Gamma_K$ tel que $Y(x)$
 est stable sous l'action de $\Gamma_L$ et comme il existe une injection $G_L$-\'equivariante
 de $Y(x)$ dans $\Cp$, l'un des poids de Hodge-Tate de $Y(x)$ est nul, et donc $n$ est pair; 
on pose $n=2k$. Comme $\Sym^{2k} V$ n'a qu'un seul poids de Hodge-Tate nul, l'injection 
$G_L$-\'equivariante $\Sym^{2k} V \to \Cp$ est unique \`a multiplication par un \'el\'ement 
de $L^\times$ pr\`es. On a donc $x \in K_\infty \otimes_{\Qp} \Sym^{2k} V$, 
et le r\'esultat s'en d\'eduit en exprimant $\Sym^{2k} V$ en termes de
$\Sym^2 V$ et $\det V$.
\end{proof}

\subsection{Vecteurs localement analytiques}
\label{slocan}

Nous calculons \`a pr\'esent $\hat{K}_\infty^{\la}$. Pour $i=1,2$, soit $\{x_{i,n}\}_{n \geq 1}$ une suite avec $x_{i,n} \in K_\infty$ telle que $\| x_i - x_{i,n} \| \leq p^{-n}$. Soit $r(n)$ tel que $x_{i,n} \in K_{r(n)}$ et tel que si $m \geq r(n)$, alors $x_i \in \hat{K}_\infty^{G_m\dan}$ et $\| x_i - x_{i,n} \|_{G_m} = \| x_i - x_{i,n} \|$. On peut supposer que la suite $\{r(n)\}_{n \geq 1}$ est croissante. Si $m \geq r(n)$, on note $K_m \dacc{\xbf-\xbf_n}_n$ l'espace des s\'eries $\sum_{\kbf \in \NN^2} a_\kbf (\xbf-\xbf_n)^\kbf$ avec $a_\kbf \in K_m$ tels que $p^{n|\kbf|} a_\kbf \to 0$. Les \'el\'ements de $K_m \dacc{\xbf-\xbf_n}_n$ sont des \'el\'ements de $\hat{K}_\infty^{G_m\dan} \subset \hat{K}_\infty^{\la}$, et on a une inclusion $K_{r(n)} \dacc{\xbf-\xbf_n}_n \subset K_{r(n+1)} \dacc{\xbf-\xbf_{n+1}}_{n+1}$.

\begin{theo}\label{kinfansl}
L'application $\cup_{n \geq 1} K_{r(n)} \dacc{\xbf-\xbf_n}_n \to \hat{K}_\infty^{\la}$ est un isomorphisme d'espaces LB.
\end{theo}

\begin{exem}\label{exyansl}
On a bien $y \in \cup_{n \geq 1} K_{r(n)} \dacc{\xbf-\xbf_n}_n$, ce que l'on peut voir comme suit. On a $y^2=\delta^2+x_1x_2$. Si $\{y_n\}_{n \geq 1}$ est une suite de $K_\infty$ qui tend vers $y$, alors 
\[ y  = \pm y_n \sqrt{1+ \frac{\delta^2+((x_1-x_{1,n})+x_{1,n})((x_2-x_{2,n})+x_{2,n})-y_n^2}{y_n^2}}, \]
et il suffit de d\'evelopper et d'utiliser la formule $\sqrt{1+X} = \sum_{k \geq 0} \binom{1/2}{k} X^k$, le r\'esultat convergeant dans $K_{r(n)} \dacc{\xbf-\xbf_n}_n$ pour $n \gg 0$.
\end{exem}

Soit $L_\infty = K_\infty(\mu_{p^\infty})$ et $\Gamma_L = \Gal(L_\infty/K)$. On note $\nabla$ le g\'en\'erateur habituel de l'alg\`ebre de Lie de $\Gal(K(\mu_{p^\infty})/K)$. Le groupe $\Gamma_L$ est un groupe de Lie $p$-adique de dimension $4$, dont l'alg\`ebre de Lie est isomorphe \`a $\LieSL \oplus \Qp \nabla$. Soient $D_1 = \smat{ 0 & 0 \\ 1 & 0} \in \LieSL$ et $D_2 = \smat{ 0 & 1 \\ 0 & 0} \in \LieSL$ et $H = \smat{ -1 & 0 \\ 0 & 1} \in \LieSL$, de sorte que $[D_1,D_2] = H$. 

\begin{lemm}\label{acslx}
On a $D_1(x_1) = D_2(x_2) = 2y$ et $D_1(x_2) = D_2(x_1) = 0$.
\end{lemm}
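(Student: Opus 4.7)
Le plan est de se ramener \`a un calcul dans $\Sym^2 V$ en exploitant l'inclusion $G_L$-\'equivariante $\Sym^2 V\hookrightarrow \Cp$ utilis\'ee dans la preuve de la prop.~\ref{finkinf}. Puisque $x_1,x_2,y$ sont par d\'efinition les images dans $\hat{K}_\infty^{\fin}$ des tenseurs $e_1\otimes e_1$, $e_2\otimes e_2$ et $(e_1\otimes e_2+e_2\otimes e_1)/2$ respectivement, et puisque cette inclusion est $\Gamma_L$-\'equivariante, elle est aussi \'equivariante pour l'action d\'eriv\'ee de $\LieSL$. Il suffit donc d'\'etablir les m\^emes \'egalit\'es au niveau de $\Sym^2 V$.

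Je commencerais par remarquer que l'action de $\LieSL$ sur $V$ d\'ecoule de celle de $\Gamma_K\subset \SL_2(\Zp)$ dans la base $(e_1,e_2)$ : on a simplement $D\cdot \smat{a\\b}=D\smat{a\\b}$ pour $D\in\LieSL$. En \'evaluant sur $e_1=\smat{1\\0}$ et $e_2=\smat{0\\1}$, on obtient $D_1(e_1)=e_2$, $D_1(e_2)=0$, $D_2(e_1)=0$ et $D_2(e_2)=e_1$ (ce qui est coh\'erent avec $[D_1,D_2]=H$).

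Il reste \`a appliquer la r\`egle de Leibniz pour l'action d\'eriv\'ee sur $V\otimes V$, restreinte au facteur sym\'etrique. Pour $D_1$ :
\[ D_1(e_1\otimes e_1)=e_2\otimes e_1+e_1\otimes e_2=2y,\quad D_1(e_2\otimes e_2)=0, \]
ce qui donne $D_1(x_1)=2y$ et $D_1(x_2)=0$. Sym\'etriquement, pour $D_2$ on obtient $D_2(x_2)=2y$ et $D_2(x_1)=0$. Il n'y a pas d'obstacle s\'erieux : l'unique point d\'elicat, si l'on veut \^etre tout \`a fait soigneux, est de v\'erifier que l'inclusion $\Sym^2 V\hookrightarrow \Cp$ donn\'ee par $e_1\otimes e_1\mapsto x_1$, etc., transporte bien l'action d\'eriv\'ee de $\LieSL$ sur $\Sym^2 V$ sur l'action d\'eriv\'ee sur $\Cp$; mais cela r\'esulte imm\'ediatement de la fonctorialit\'e de l'op\'erateur diff\'erentiel associ\'e \`a $D\in\LieSL$ pour tout morphisme de repr\'esentations localement analytiques.
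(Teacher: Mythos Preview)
Ta d\'emonstration est correcte et constitue pr\'ecis\'ement le calcul \'el\'ementaire que les auteurs ont en t\^ete : dans l'article, le lemme est \'enonc\'e sans preuve, car il d\'ecoule imm\'ediatement de la r\`egle de Leibniz appliqu\'ee \`a l'action de $\LieSL$ sur $V\otimes V$ via le plongement $\Sym^2 V\hookrightarrow \Cp$. Ton approche est donc exactement celle sous-entendue par le texte.
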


Soient $\partial_1$, $\partial_2$ et $J : \hat{L}_\infty^{\la} \to \hat{L}_\infty^{\la}$ les op\'erateurs $\partial_i = 1/2y \cdot D_i$ et $J = x_1 D_1 - x_2 D_2 + y H$. On a $\partial_i = d/dx_i$ sur $L_\infty[x_1,x_2]$ et $J = 4y^3 \cdot [\partial_1,\partial_2]$. On peut donc voir $J$ comme un op\'erateur de {\og courbure \fg}.

\begin{lemm}\label{exisz}
Il existe $z \in \hat{L}_\infty^{\la}$ tel que $J(z)=1$ et $\nabla(z) = s \neq 0$.
\end{lemm}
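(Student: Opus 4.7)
The plan is to construct $z$ in two stages: first secure the cyclotomic condition $\nabla(z)=s$ by the method of Lemma~\ref{zla}, then deduce $J(z)=1$ from a structural relation between $J$ and $\nabla$ on $\hat{L}_\infty^{\la}$ that comes from applying Theorem~\ref{thgen} to the larger group $\Gamma_L$.

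First I would apply (the proof of) Lemma~\ref{zla} to the standard faithful $2$-dimensional representation $V$ of $\Gamma_K$: since the Sen operator of $V$ has a nonzero eigenvalue $s$, after possibly enlarging $K$ by a finite extension so that $s\in K$ and $\Gamma_L$ splits as $\Gamma_K\times\Delta_K$, the first bullet of that proof produces $\alpha\in\hat{L}_\infty^{\la}$ with $g(\alpha)=\alpha+\log\chi_{\mathrm{cycl}}(g)$ for all $g\in\Delta_K$, so that $\nabla(\alpha)=1$. Setting $z_0=s\alpha$ then yields $\nabla(z_0)=s$.

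Next, I would identify $J$ and $\nabla$ as $\hat{K}_\infty^{\la}$-proportional on $\hat{L}_\infty^{\la}$ via Sen theory for $\Gamma_L$. On $\hat{K}_\infty^{\la}$, a direct computation using $D_i=2y\partial_i$ and $H=-2x_1\partial_1+2x_2\partial_2$ shows that $J=x_1D_1-x_2D_2+yH$ vanishes as a vector field, and is in fact the unique (up to $\hat{K}_\infty$-scalar) $\hat{K}_\infty$-linear combination of $D_1,D_2,H$ with this property; hence the Sen operator $\Theta_{\mathrm{Sen}}^V\in\Cp\otimes\LieSL$, which also annihilates $\hat{K}_\infty^{\la}$ by Theorem~\ref{thgen}, must be a scalar multiple of $J$. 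Applying Theorem~\ref{thgen} to $\Gamma_L$ and a faithful representation $V'=V\oplus\Qp(1)$, whose Sen operator decomposes as $\Theta_{\mathrm{Sen}}^V+\nabla\in\Cp\otimes(\LieSL\oplus\Qp\nabla)$, forces an identity of the form $\nabla=c\,J$ on $\hat{L}_\infty^{\la}$ for a specific nonzero constant $c\in K$.

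Combining the two steps, one gets $J(z_0)=c^{-1}\nabla(z_0)=s/c$, and by rescaling $\alpha$ (or equivalently modifying the initial representation and re-invoking Lemma~\ref{zla}) so that $s/c=1$, one obtains the desired $z$. The main obstacle is the identification $\Theta_{\mathrm{Sen}}^V=\lambda J$ in $\Cp\otimes\LieSL$ and the tracking of the ratio $c$ through the vanishing of $\Theta_{\mathrm{Sen}}^{V'}$ on $\hat{L}_\infty^{\la}$; this requires comparing the eigenvalues $\pm\delta$ of the matrix $J$ acting on $V$ with the Sen eigenvalues $\pm s$, and using the Hodge-Tate weight of the cyclotomic twist in $V'$ to calibrate the normalization of $\nabla$.
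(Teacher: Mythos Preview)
Your proposal has a genuine gap in the final step. You argue that $\nabla=cJ$ on $\hat{L}_\infty^{\la}$ for some \emph{fixed} constant $c$, and then claim you can ``rescale $\alpha$'' to force $s/c=1$. But $c$ is structurally determined (it is the ratio coming from the Sen direction of $\Gamma_L$), not a free parameter: if $\nabla=cJ$ then any $z$ with $J(z)=1$ automatically has $\nabla(z)=c$, so the lemma is provable by your method only if $c=s$. Rescaling $\alpha$ or changing the representation fed into Lemma~\ref{zla} does not change $c$; it only changes $z_0$ by an element of $\hat{K}_\infty^{\la}$ (on which both $\nabla$ and $J$ vanish). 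You therefore owe an actual computation showing $c=s$, and you have not given one.

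There is also a circularity problem. Your identification $\Theta_{\mathrm{Sen}}^V=\lambda J$ rests on knowing that $J$ annihilates all of $\hat{K}_\infty^{\la}$ and that the annihilator in $\Cp\otimes\LieSL$ is one-dimensional. You verified $J(x_1)=J(x_2)=J(y)=0$, but extending this to $\hat{K}_\infty^{\la}$ needs either Theorem~\ref{kinfansl} (whose proof uses the present lemma) or Theorem~\ref{thgen} (which lives in \S\ref{drag}, after this section). Either way you are invoking results downstream of what you are trying to prove.

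The paper avoids all of this by a short direct construction: twist $V$ by $\chi_{\mathrm{cycl}}^s$ so that one Hodge--Tate weight becomes $0$, embed $V(s)\hookrightarrow\Cp$ equivariantly, and let $w$ be the image of $e_1(s)$. A two-line Lie-algebra computation on $V$ gives $J(w)=\delta w$ and $\nabla(w)=sw$, whence $z=\log(w)/\delta$ works. This computes your constant $c$ (indeed $c=s$) as a byproduct, but without any appeal to the global structure of $\hat{K}_\infty^{\la}$ or $\hat{L}_\infty^{\la}$.
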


\begin{proof}
La repr\'esentation $V(s)$ de $G_{K(\mu_{p^n})}$ existe pour $n \gg 0$. Elle a un poids de Hodge-Tate nul et s'envoie donc dans $\Cp$. Si $w$ d\'esigne l'image de $e_1(s)$, alors $w \in \hat{L}_\infty^{\la}$ et $J(w)=w\delta$ et $\nabla(w) = sw$. Si $z=\log(w)/\delta$,  alors $J(z)=1$ et $\nabla(z)=s$.
\end{proof}

\begin{proof}[D\'emonstration du th.~\ref{kinfansl}]
Soit $\{z_n\}_{n \geq 1}$ une suite d'\'el\'ements de $L_\infty$ telle que $z_n \to z$. Quitte \`a modifier la d\'efinition de $r(n)$, on peut supposer que $z_n \in L_{r(n)}$ et que si $m \geq r(n)$, alors $z \in \hat{L}_\infty^{G_m\dan}$ et $\|z-z_n\|_{G_m}=\|z-z_n\|$. Si $y \in \hat{L}_\infty^{G_q\dan}$ pour un $q \geq 1$, alors par les lemmes \ref{liecont} et \ref{descnm}, il existe une constante $h$ telle que $\| J^k(y)/k! \|_{G_\ell} \leq p^{(h-1) k} \| y \|_{G_q}$ quels que soient $\ell \geq q$ et $k \geq 0$. Si $\ell \geq \max(q,r(h))$ et $i \geq 0$, alors la s\'erie 
\[ c_i(y) = \sum_{k \geq 0} (-1)^k (z-z_h)^k \cdot \frac{J^{k+i}(y)}{(k+i)!} \binom{k+i}{i} \]
converge dans $\hat{L}_\infty^{G_\ell\dan}$ vers un \'el\'ement qui satisfait $\| c_i(y) \|_{G_\ell} \leq p^{(h-1)i} \| y \|_{G_q}$. On a donc $y = \sum_{i \geq 0} c_i(y) (z-z_h)^i$ dans $\hat{L}_\infty^{G_\ell\dan}$, et de plus $J(c_i(y)) = 0$ si $i \geq 0$.

Si $c \in \hat{L}_\infty^{G_\ell \dan}$ v\'erifie $J(c)=0$, alors $\partial_1 \partial_2 (c) = \partial_2 \partial_1 (c)$. Par les lemmes \ref{liecont} et \ref{descnm}, il existe une constante $n$ telle que $\| \partial^\kbf(c) / \kbf! \|_{G_m} \leq p^{(n-1) |\kbf| } \| c \|_{G_\ell}$ pour $\kbf \in \NN^2$ et $m\geq \ell$. Si $\jbf \in \NN^2$ et $m \geq \max(\ell,r(n))$, posons dans $ \hat{L}_\infty^{G_m\dan}$
\[ a_\jbf(c) = \sum_{\kbf \in \NN^2} (-1)^{|\kbf|} (\xbf-\xbf_n)^\kbf \cdot \frac{\partial^{\kbf+\jbf}(c)}{(\kbf+\jbf)!} \binom{\kbf+\jbf}{\jbf}. \]
Les m\^emes arguments que pr\'ec\'edemment montrent que $c = \sum_{\jbf  \in \NN^2} a_\jbf(c) (\xbf-\xbf_n)^\jbf$ dans $\hat{L}_\infty^{G_m\dan}$. Par ailleurs, on a $D_1(a_\jbf(c))=D_2(a_\jbf(c))=0$ et donc aussi $H(a_\jbf(c))=0$ puisque $H=[D_1,D_2]$, ce qui fait qu'il existe $m' \gg 0$ tel que
\[ a_\jbf(c) \in (\hat{L}_\infty^{D_1,D_2,H})^{G_m\dan} = \widehat{K_{m'}(\mu_{p^\infty})}^{G_m\dan} = K_m(\mu_{p^m}), \] 
la derni\`ere \'egalit\'e r\'esultant du th.~\ref{finisan}. Les coefficients $a_\jbf(c_i(y))$ appartiennent donc tous \`a $K_m(\mu_{p^m})$ et dans $\hat{L}_\infty^{G_m\dan}$, on a 
\[ y = \sum_{\jbf \in \NN^2,i \in \NN} a_\jbf(c_i(y)) (\xbf-\xbf_n)^\jbf (z-z_h)^i. \]

Si l'on suppose \`a pr\'esent que $y \in \hat{K}_\infty^{\la}$, alors $\nabla(y) = 0$. On a par ailleurs \[ \nabla(y)=s \cdot \sum_{\jbf \in \NN^2,i \geq 1} i \cdot a_\jbf(c_i(y)) (\xbf-\xbf_n)^\jbf (z-z_h)^{i-1}, \]
ce qui fait que $a_\jbf(c_i(y)) = 0$ si $i \neq 0$. On a aussi $\Tr_{K_\infty(\mu_{p^m})/K_\infty}(y) = [K_\infty(\mu_{p^m}) : K_\infty] \cdot y$ et donc $y = \sum_{\jbf \in \NN^2} y_\jbf (\xbf-\xbf_n)^\jbf$ avec $y_\jbf = [K_\infty(\mu_{p^m}) : K_\infty]^{-1} \cdot \Tr_{K_\infty(\mu_{p^m})/K_\infty}(a_\jbf(c_0(y)))$, qui appartient \`a $K_m$.

L'application $\cup_{n \geq 1} K_{r(n)} \dacc{\xbf-\xbf_n}_n \to \hat{K}_\infty^{\la}$ est une bijection continue entre espaces LB, et donc un isomorphisme d'espaces LB par le th\'eor\`eme de l'image ouverte.
\end{proof}

\section{Structure de $\hat{K}_\infty^{\la}$ dans le cas g\'en\'eral}
\label{drag}
Dans ce chapitre, nous ne faisons pas d'hypoth\`ese sur le groupe de Lie $\Gamma_K$.
Comme $\Gamma_K$ est un groupe de Lie $p$-adique compact, de dimension finie, il existe
(\S 27 de \cite{PSLG}) un groupe analytique ${\mathbb G}$, d\'efini sur $\Qp$, tel que l'on ait
$\Gamma_K={\mathbb G}(\Zp)$.  On note ${\mathfrak g}$ l'alg\`ebre de Lie de
$\Gamma_K$; c'est un $\Zp$-module libre de rang la dimension~$d$ de $\Gamma_K$.

Si $n\geq 1$, on note $\Gamma_n$ le groupe ${\mathbb G}(p^n\Zp)$, image de $p^n{\mathfrak g}$
par l'exponentielle, et on note $K_n$ le sous-corps $K_\infty^{\Gamma_n}$ de
$K_\infty$.  L'anneau $\hat{K}_\infty^{\Gamma_n\dan}$ est une $K_n$-alg\`ebre de Banach;
on note $X_n$ le $K_n$-espace analytique qu'elle d\'efinit.  Nous allons prouver
que $X_n$ devient une boule de dimension~$d-1$ quand on \'etend les scalaires \`a
un corps assez gros.
  Pour \'enoncer le r\'esultat pr\'ecis\'ement, nous allons devoir
introduire certains sous-groupes \`a un param\`etre de ${\mathbb G}$.
Disons que ${\mathfrak a}\in\OO_{\Cp}\otimes_{\Zp}{\mathfrak g}$ est {\it primitif}
si $(\OO_{\Cp}\otimes_{\Zp}{\mathfrak g})/\OO_{\Cp}{\mathfrak a}$ est sans torsion
(et donc est libre, de rang $d-1$, sur $\OO_{\Cp}$).
Si ${\mathfrak a}$ est primitif, on note ${\mathbb H}_{\mathfrak a}$ le sous-groupe \`a un param\`etre
qu'il d\'efinit: si $n\geq1$, alors ${\mathbb H}_{\mathfrak a}(p^n\OO_{\Cp})$ est l'image de
$p^n\OO_{\Cp}$ par l'application $t\mapsto \exp(t{\mathfrak a})$.

\begin{theo}\label{sen1}
Il existe $m\in\NN$ et 
${\mathfrak a}\in\OO_{\hat{K}_\infty(\mu_{p^m})}\otimes_{\Zp}{\mathfrak g}$, 
primitif, tel que,
si $n\geq 1$ et si $L$ est un sous-corps de $\Cp$ contenant 
$\hat{K}_\infty(\mu_{p^m})$, alors
$X_n(L)={\mathbb H}_{\mathfrak a}(p^n\OO_{L})\backslash {\mathbb G}(p^n\OO_L)$.
\end{theo}

\begin{rema}\label{sen2}
(i)  Il r\'esulte de la description ci-dessus que, si $L$ est un sous-corps
complet de $\Cp$ contenant $\hat{K}_\infty(\mu_{p^m})$,
alors $X_n\otimes L$ est une boule de
dimension $d-1$: si ${\mathfrak b}_1,\dots,{\mathfrak b}_{d-1}$ sont tels que
${\mathfrak a},{\mathfrak b}_1,\dots,{\mathfrak b}_{d-1}$ forment une base de
$\OO_{L}\otimes_{\Zp}{\mathfrak g}$ sur $\OO_{L}$, alors
$(x,y_1,\dots,y_{d-1})\mapsto \exp(x{\mathfrak a})\exp(y_1{\mathfrak b}_1)\cdots
\exp(y_{d-1}{\mathfrak b}_{d-1})$ induit un isomorphisme d'espaces analytiques
de $B(0,p^{-n})^d$ sur ${\mathbb G}(p^n\cdot)$,
et donc $(y_1,\dots,y_{d-1})\mapsto \exp(y_1{\mathfrak b}_1)\cdots
\exp(y_{d-1}{\mathfrak b}_{d-1})$ induit un isomorphisme d'espaces analytiques
de $B_{d-1}(0,p^{-n})$ sur $X_n$.

(ii) On en d\'eduit que $\hat{K}_\infty^{\Gamma_n\dan}$ est un anneau de
s\'eries en $d-1$ variables.  
Plus exactement, si $L$ est comme ci-dessus, et si
on note $L\dcroc{X_1,\hdots,X_{d-1}}_{>0}$
l'anneau des germes de fonctions analytiques en $0$ (i.e. des s\'eries de rayon de convergence non nul),
il existe un isomorphisme de $L\dcroc{X_1,\hdots,X_{d-1}}_{>0}$
sur $L\hat\otimes \hat{K}_\infty^{\la}$ envoyant
le sous-anneau des s\'eries de rayon de convergence~$\geq p^{-n}$ sur
$L\hat\otimes \hat{K}_\infty^{\Gamma_n\dan}$.

(iii) On peut d\'emontrer le r\'esultat pr\'ec\'edent \`a la main, si
$K_\infty/K$ est une extension Lubin-Tate (comme au \S\,\ref{calanlt}),
ou si $\Gamma_K$ est un sous-groupe ouvert de $\SL_2(\Zp)$ (comme au \S\,\ref{sl2sec}).
Il suffit d'utiliser les th.~\ref{qpanfinf} et~\ref{kinfansl} en prenant pour variables 
$X_i = 1 \otimes x_i - x_i \otimes 1 \in \Cp \otimes_{K_n} \hat{K}_\infty^{G_n\dan}$, avec $n \gg 0$ 
et $i \in \emb \setminus \{ \Id \}$ dans le cas Lubin-Tate et $i \in \{ 1, 2 \}$ dans le cas $\SL_2(\Zp)$.
\end{rema}

\begin{proof}[D\'emonstration du th.~\ref{sen1}]
Le point de d\'epart de la description de $X_n$
est l'identification 
$$\hat{K}_\infty^{\Gamma_n\dan}={\mathcal C}^\an(\Gamma_n,\hat{K}_\infty)^{\Gamma_n}=
\big(\hat{K}_\infty\hat\otimes_{\Qp} {\mathcal C}^\an(\Gamma_n,\Qp)\big)^{\Gamma_n},$$
l'action de $\Gamma_n$ sur ${\mathcal C}^\an(\Gamma_n,\hat{K}_\infty)$ \'etant donn\'ee
par $(h\cdot\phi)(g)=h\cdot\phi(h^{-1}g)$: dans un sens, on envoie 
$x\in \hat{K}_\infty^{\Gamma_n\dan}$ sur la fonction $\phi_x$ d\'efinie par $\phi_x(g)=g\cdot x$,
dans l'autre sens on envoie $\phi$ sur $\phi(1)$.
L'action de $\Gamma_n$ sur $\hat{K}_\infty^{\Gamma_n\dan}$ correspond,
 via cette identification, \`a l'action $(\gamma,\phi)\mapsto \gamma *\phi$
sur ${\mathcal C}^\an(\Gamma_n,\hat{K}_\infty)$, avec
$(\gamma *\phi)(g)=\phi(g\gamma)$.
Nous allons appliquer la th\'eorie de Sen classique, telle qu'elle
est pr\'esent\'ee dans \cite{PCsen} et~\cite{LB11}, \`a la grosse repr\'esentation
${\mathcal C}^\an(\Gamma_n,\Qp)$.

On fixe un plongement de $\Gamma_K$ dans $\GL_N(\Zp)$, pour un certain $N$.
Si $k\in\NN$, on note $V_k$ l'espace des $\phi:\Gamma_K\to\Qp$ qui sont la restriction
d'un polyn\^ome de degr\'e~$\leq k$ sur $M_N(\Zp)$, et on note ${\mathcal C}^{\rm alg}(\Gamma_K)$
la r\'eunion (croissante) des $V_k$.  (Contrairement \`a ce que la notation
sugg\`ere, cet espace d\'epend en g\'en\'eral 
du plongement de $\Gamma_K$ dans un $\GL_N(\Zp)$.) Comme l'espace des
polyn\^omes de degr\'e~$\leq k$
est stable par $\GL_N(\Zp)$, il l'est a fortiori par $\Gamma_K$,
et chaque $V_k$ peut \^etre vu comme une repr\'esentation de $G_K$ agissant
\`a travers $\Gamma_K$.

Si $n\in\NN$, l'espace ${\mathcal C}^{\rm alg}(\Gamma_K)$ est dense dans
${\mathcal C}^\an(\Gamma_n,\Qp)$ qui est donc son compl\'et\'e pour la norme 
induite.  On note $T_k$ la boule unit\'e de $V_k$ pour cette norme;
c'est l'intersection de $V_k$ avec la boule unit\'e
${\mathcal C}^\an(\Gamma_n,\Qp)^0$ de ${\mathcal C}^\an(\Gamma_n,\Qp)$.
Si $a\geq 1$, alors $\Gamma_{n+a}$ agit trivialement sur ${\mathcal C}^\an(\Gamma_n,\Qp)^0/p^a$
et donc aussi sur $T_k/p^a$.  On choisit $a\geq v_p(12p)$, 
on choisit $m\in\NN$ assez grand (i.e.~$m\geq n(K_{n+a})$, o\`u l'entier
$n(L)$ est celui utilis\'e dans le \S~4.1 de~\cite{LB11}), et on pose
$M=K_{n+a}(\mu_{m})$.
On note $H_M$ et $\Gamma_M$ les groupe $\Gal(\Qpbar/K_\infty(\mu_{p^m}))$
et $\Gal(K_\infty(\mu_{p^m})/M)$.
Alors $H_M\subset H_K$ et $\Gamma_M$ s'identifie \`a un sous-groupe d'indice fini
de $\Gamma_n$ (et m\^eme de $\Gamma_{n+a}$).

Soit $\asen^m$ l'anneau des entiers de $\bsen^m$. On note $\asenM^m$ et $\bsenM^m$
les points fixes de ces anneaux sous l'action de $H_M$.
Comme $T_k$ est fixe par $H_K$, on a $(\asen\otimes_{\Zp}T_k)^{G_M}=(\asenM\otimes_{\Zp}T_k)^{\Gamma_M}$,
et les r\'esultats du \S\,3.3 de \cite{LB11} et le th.~2 de \cite{PCsen} 
impliquent que, pour tout $k$, on a un isomorphisme
\[ \asenM^m \otimes_{\OO_M} (\asenM^m \otimes_{\Zp} T_k)^{\Gamma_M}=\asenM^m \otimes_{\Zp} T_k. \]
L'isomorphisme $\bsenM^m \otimes_M (\bsenM^m \otimes_{\Qp} V_k)^{\Gamma_M} 
= \bsenM^m \otimes_{\Qp} V_k$ est donc une isom\'etrie, et de m\^eme pour 
$\bsenM^m \otimes_M (\bsenM^m \otimes_{\Qp} V_\infty)^{\Gamma_M} = 
\bsenM^m \otimes_{\Qp} V_\infty$, si $V_\infty={\mathcal C}^{\rm alg}(\Gamma_K)$
muni de la norme induite par celle de ${\mathcal C}^\an(\Gamma_n,\Qp)$. Posons $D_\infty = 
(\bsenM^m \otimes_{\Qp} V_\infty)^{\Gamma_M}$. En passant aux compl\'et\'es, 
on trouve que $\bsenM^m \hat{\otimes}_M \hat{D}_\infty = 
\bsenM\hat\otimes_{\Qp} {\mathcal C}^{\an}(\Gamma_n,\Qp)$.
En prenant les points fixes sous l'action de $\Gamma_M$, on en d\'eduit que 
$\hat{D}_\infty = 
\big(\bsenM\hat\otimes_{\Qp} {\mathcal C}^{\an}(\Gamma_n,\Qp)\big)^{\Gamma_M}$, et donc 
que 
$$\bsenM^m \hat{\otimes}_M \big(\bsenM\hat\otimes_{\Qp} {\mathcal C}^{\an}(\Gamma_n,\Qp)\big)^{\Gamma_M}
 = \bsenM\hat\otimes_{\Qp} {\mathcal C}^{\an}(\Gamma_n,\Qp).$$ 

Soit $\theta_g = \frac{d}{du} \otimes 1\otimes 1$ et $\theta_d = 1 \otimes \frac{d}{du}\otimes 1$ 
agissant sur le membre de gauche.
Via l'isomorphisme ci-dessus
$\theta_g + \theta_d$ devient l'op\'erateur $\frac{d}{du}\otimes 1$ agissant
sur le membre de droite, et son noyau est donc
$\hat{K}_\infty(\mu_{p^m})\hat\otimes_{\Qp} {\mathcal C}^{\an}(\Gamma_n,\Qp)=
{\mathcal C}^{\an}(\Gamma_n,\hat{K}_\infty(\mu_{p^m}))$.
Sur ce noyau, $\theta_g$ (\'egal \`a $-\theta_d$) induit une d\'erivation $D$,
et on obtient donc, en prenant l'intersection des noyaux de $\theta_g$ et $\theta_d$,
la relation
$$\hat{K}_\infty(\mu_{p^m})\hat\otimes_M{\mathcal C}^{\an}(\Gamma_n,\hat{K}_\infty(\mu_{p^m}))^{\Gamma_M}={\mathcal C}^{\an}(\Gamma_n,\hat{K}_\infty(\mu_{p^m}))^{D=0}.$$
Enfin, on peut faire une descente \'etale de $M$ \`a $K_n$ et obtenir
$$\hat{K}_\infty(\mu_{p^m})\hat\otimes_{K_n}{\mathcal C}^{\an}(\Gamma_n,\hat{K}_\infty)^{\Gamma_n}={\mathcal C}^{\an}(\Gamma_n,\hat{K}_\infty(\mu_{p^m}))^{D=0}.$$
(Ce r\'esultat est une version de l'identit\'e
$\Cp\otimes_K(\Cp\otimes_{\Qp} V)^{G_K}=
(\Cp\otimes_{K_n}{\rm D}_{{\rm Sen},n}(V))^{\Theta_{\rm Sen}=0}$, cf.~rem.~\ref{rem2}.)

Par ailleurs, on peut faire agir $\Gamma_n$ trivialement sur 
$\bsenM$ et $\hat{K}_\infty(\mu_{p^m})$ et par
$(\gamma *\phi)(g)=\phi(g\gamma)$ sur ${\mathcal C}^{\an}(\Gamma_n,\Qp)$.
Cette action commute \`a celles de $\theta_g$, $\theta_d$ et \`a l'action
pr\'ec\'edente de $\Gamma_n$; elle commute donc aussi \`a $D$.
Autrement dit, $D$ est invariante 
par translation \`a droite.  Elle est donc de la forme 
$\lim_{t\to 0}\frac{1}{t}\big(\phi(e^{t{\mathfrak a}}g)-\phi(g)\big)$, pour un certain 
${\mathfrak a}\in {\hat{K}_\infty}(\mu_{p^m})\otimes_{\Zp}{\mathfrak g}$.

Une comparaison de ce qui se passe pour $n$ et $n+1$ montre que
${\mathfrak a}$ ne d\'epend pas de $n$, et on peut donc faire descendre 
l'isomorphisme ci-dessus \`a $\hat{K}_\infty(\mu_{p^m})$, o\`u $m$ est
ind\'ependant de $n$, et on peut ensuite \'etendre les scalaires \`a tout
sous-corps complet $L$ de $\Cp$ contenant $\hat{K}_\infty(\mu_{p^m})$.
Soit ${\mathbb H}_{\mathfrak a}$ le groupe \`a un param\`etre d\'efini par un multiple primitif de ${\mathfrak a}$.
Le noyau de $D$ s'identifie
aux fonctions analytiques sur ${\mathbb G}(p^n\OO_{L})$,
constantes modulo multiplication \`a gauche par 
${\mathbb H}_{\mathfrak a}(p^n\OO_{L})$,
ce qui permet de conclure.
\end{proof}

\begin{prop}\label{sen3}
L'\'el\'ement ${\mathfrak a}$ de $\Cp\otimes_{\Zp}{\mathfrak g}$ fourni
par le th.~\ref{sen1}
n'est autre que
l'op\'erateur $\Theta_{\rm Sen}$ 
associ\'e \`a la repr\'esentation $V_1$.
\end{prop}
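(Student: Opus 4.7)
The strategy is to restrict the construction of $\mathfrak{a}$ from the proof of th.~\ref{sen1} to the finite-dimensional faithful subrepresentation $V_1 \subset V_\infty$, and to identify the action of the derivation $D$ on $V_1$ with the Sen operator of $V_1$. Since $V_1$ contains the coordinate functions of the embedding $\Gamma_K \hookrightarrow \GL_N(\Zp)$, the representation is faithful; hence the Lie algebra ${\mathfrak g}$ injects into ${\rm End}(V_1)$, and by rem.~\ref{rem2}(ii) both $\Theta_{\rm Sen}(V_1)$ and $\mathfrak{a}$ live in $\Cp \otimes {\mathfrak g}$, each being uniquely determined by its action on $V_1$.

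I would first restrict the isomorphism
\[ \bsenM^m \hat{\otimes}_M \hat{D}_\infty = \bsenM^m \hat{\otimes}_{\Qp} {\mathcal C}^{\an}(\Gamma_n, \Qp) \]
to the finite-dimensional subspace $\bsenM^m \otimes_M D(V_1) \cong \bsenM^m \otimes_{\Qp} V_1$, where $D(V_1) = (\bsenM^m \otimes V_1)^{\Gamma_M}$ is the classical Sen module of $V_1$. Its elements take the explicit form $\iota(d) = e^{-u \Theta_{\rm Sen}(V_1)} d$ for $d \in \dsen(V_1)$ by prop.~\ref{classique3}. A direct calculation with $\theta_g = d/du \otimes 1$ and $\theta_d = 1 \otimes d/du$ then shows that, under the identification $\dsen(V_1) \cong D(V_1)$, the operator $\theta_g$ transports to $\Theta_{\rm Sen}(V_1)$ acting on $\dsen(V_1)$; hence the derivation $D = \theta_g$, restricted to the subspace $V_1 \subset \hat{K}_\infty(\mu_{p^m}) \otimes V_1 \subset {\mathcal C}^{\an}(\Gamma_n, \hat{K}_\infty(\mu_{p^m}))$, acts as the endomorphism $\Theta_{\rm Sen}(V_1)$ of $V_1$.

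On the other hand, by the very construction in th.~\ref{sen1} the derivation $D$ is the right-invariant vector field attached to $\mathfrak{a}$. For a coordinate function $\phi_{ij}(g) = g_{ij}$ in $V_1$, a direct computation gives $(D\phi_{ij})(g) = (\mathfrak{a} g)_{ij}$, so $D$ acts on $V_1$ as left multiplication by the matrix $\mathfrak{a}$ under the natural identification ${\mathfrak g} \hookrightarrow {\rm End}(V_1)$. Comparing the two descriptions of $D|_{V_1}$ and invoking the faithfulness of $V_1$ yields the equality $\mathfrak{a} = \Theta_{\rm Sen}(V_1)$ in $\Cp \otimes {\mathfrak g}$.

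The main technical step is the direct calculation of the action of $\theta_g$ on $D(V_1)$ through the classical Sen formula $\iota(d) = e^{-u \Theta_{\rm Sen}(V_1)} d$; this is what translates the cyclotomic Sen theory of $V_1$ into information about $\mathfrak{a}$. The rest is careful bookkeeping of the conventions identifying the Lie algebra action of ${\mathfrak g}$ on $V_1$ with right-invariant vector fields on $\Gamma_n$, where signs must be tracked attentively in order to obtain exactly the stated equality rather than its opposite.
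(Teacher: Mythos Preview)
Your proposal is correct and follows essentially the same route as the paper: restrict the construction of the derivation $D$ from the proof of th.~\ref{sen1} to the finite-dimensional piece $V_1$, use prop.~\ref{classique3} (the formula $\iota(d)=e^{-u\Theta_{\rm Sen}}d$) to compute $D=-\theta_d$ on elements $\sum_i\alpha_i\iota(d_i)$, and read off $D=\Theta_{\rm Sen}$ on $\Cp\otimes_{\Qp}V_1$; faithfulness of $V_1$ then forces ${\mathfrak a}=\Theta_{\rm Sen}$ in $\Cp\otimes_{\Zp}{\mathfrak g}$. Your extra verification that $D$ acts on the coordinate functions $\phi_{ij}$ as left multiplication by the matrix ${\mathfrak a}$ is not in the paper (which simply invokes that $D$ is the right-invariant vector field attached to ${\mathfrak a}$), but it is a welcome sanity check and does no harm.
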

\begin{proof}
Il s'agit d'un exercice de traduction reposant sur le dictionnaire
du \S\,\ref{bsensec} et en particulier la prop.~\ref{classique3}
dont nous reprenons les notations.
Soit $\phi\in V_1$.  On peut d\'ecomposer $\phi$ dans une base 
$\iota(d_1),\dots,\iota(d_N)$ de $\dsen'(V_1)$, o\`u $d_1,\dots,d_N$ est une
base de $\dsen(V_1)$, sous la forme $\phi=\sum_{i=1}^N\alpha_i\iota(d_i)$
avec $\alpha_i\in\bsen^m$.  Alors $\phi$ est tu\'e par $\theta_g+\theta_d$,
et on a $D(\phi)=-\sum_{i=1}^N\alpha_i\frac{d}{du}(\iota(d_i))=
\sum_{i=1}^N\alpha_i e^{-u\Theta_{\rm Sen}}\cdot\Theta_{\rm Sen}(d_i)$.
De plus, comme $\phi\in V_1\subset \Cp\otimes_{\Qp} V_1$, il est constant
vu comme fonction de $u$,
et donc $\phi=\sum_{i=1}^N\alpha_i^{(0)}d_i$.
De m\^eme, $D(\phi)$ est constant comme fonction de $u$ et donc
$D(\phi)=\sum_{i=1}^N\alpha^{(0)}_i\Theta_{\rm Sen}(d_i)$.
Il en r\'esulte que $D=\Theta_{\rm Sen}$ sur $\Cp\otimes_{\Qp} V_1$.
Le r\'esultat s'en d\'eduit.
\end{proof}

\begin{rema}\label{sen4}
{\rm (i)} Comme $D\neq 0$ puisqu'il est induit 
par $1\otimes\frac{d}{du}$ sur $\bsen\otimes\bsen$, on en d\'eduit
que $\Theta_{\rm Sen}\neq 0$.

{\rm (ii)}  
On a 
$${\mathcal C}^\an(X_n(\Cp),\Cp)=\Cp\hat\otimes_{K_n} {\mathcal C}^{\an}(\Gamma_n,\Cp)^{\Gamma_n}=
{\mathcal C}^{\rm an}({\mathbb G}(p^n\OO_{\Cp}),\Cp)^{D=0}.$$
Si $\sigma\in G_{K_n}$, l'action de $\sigma\otimes 1$ au milieu
devient l'action standard 
$(\sigma\cdot\phi)(x)=\sigma(\phi(\sigma^{-1}(x)))$ \`a gauche,
mais \`a droite cette action standard est tordue par l'action
de $\Gamma_n$, et est donn\'ee par la formule
$(\sigma\star\phi)(x)=\sigma\big(\phi(\sigma^{-1}(\gamma(\sigma)^{-1}x))\big)$, o\`u
l'on a not\'e $\gamma:G_K\to\Gamma_K$ l'application naturelle (cette
formule est celle que l'on a utilis\'ee pour $x\in\Gamma_n={\mathbb G}(p^n\Zp)$,
auquel cas $\sigma^{-1}(\gamma(\sigma)^{-1}x)=\gamma(\sigma)^{-1}x$;
 elle est donc vraie pour tout $x$ par prolongement
analytique).  On en d\'eduit, en notant $\pi:{\mathbb G}(p^n\OO_{\Cp})\to X_n(\Cp)$
l'application fournie par le th.~\ref{sen1}, que l'on a
$$\sigma(\pi(x))=\pi(\gamma(\sigma)\sigma(x)),\quad{\text{si $x\in {\mathbb G}(p^n\OO_{\Cp})$
et $\sigma\in G_{K_n}$.}}$$

{\rm (iii)} La formule ci-dessus passe au quotient par ${\mathbb H}_{\mathfrak a}$ car
$D=\Theta_{\rm Sen}$ et $\Theta_{\rm Sen}$ commute \`a l'action de $G_K$
sur $\Cp\otimes_{\Zp}{\mathfrak g}\subset \Cp\otimes_{\Qp} {\rm End}(V_1)$,
ce qui se traduit par $\gamma(\sigma)\sigma({\mathfrak a})\gamma(\sigma)^{-1}={\mathfrak a}$,
pour tout $\sigma\in G_{K_n}$ (avec $\sigma=\sigma\otimes 1$ sur
$\Cp\otimes_{\Qp} {\rm End}(V_1)$).
\end{rema}

\providecommand{\bysame}{\leavevmode ---\ }
\providecommand{\og}{``}
\providecommand{\fg}{''}
\providecommand{\smfandname}{\&}
\providecommand{\smfedsname}{\'eds.}
\providecommand{\smfedname}{\'ed.}
\providecommand{\smfmastersthesisname}{M\'emoire}
\providecommand{\smfphdthesisname}{Th\`ese}

\end{document}